\documentclass[10pt,a4paper]{amsart}
\usepackage{hyperref}
\usepackage{mathrsfs}
\usepackage{amsmath}
\usepackage{amssymb}
\usepackage[all]{xy}
\usepackage{latexsym}
\usepackage[T1]{fontenc}
\setcounter{tocdepth}{1}
\title[Connective $K$-theory]{On connective $K$-theory of elementary abelian $2$-groups and local duality}
\author[Geoffrey Powell]{Geoffrey M.L. Powell}
\address{Laboratoire Analyse, Géométrie et Applications, UMR 7539\\ Institut
Galilée, Université Paris 13, 93430 Villetaneuse, France}
\email{powell@math.univ-paris13.fr}
\keywords{connective $K$-theory; elementary abelian group; group cohomology; group homology; local cohomology}
\subjclass[2000]{19L41; 20J06}
\thanks{This work was partly financed by the project  ANR BLAN08-2 338236, HGRT}
\date{}
\newtheorem{thm}{Theorem}[section]
\newtheorem{prop}[thm]{Proposition}
\newtheorem{cor}[thm]{Corollary}
\newtheorem{lem}[thm]{Lemma}

\theoremstyle{definition}
\newtheorem{defn}[thm]{Definition}
\newtheorem{exam}[thm]{Example}

\theoremstyle{remark}
\newtheorem{rem}[thm]{Remark}
\newtheorem{nota}[thm]{Notation}

\newcommand{\hz}{H \zed}
\newcommand{\dbicom}{\mathbf{D}}
\newcommand{\bicom}{\mathbf{B}}
\newcommand{\kz}{\mathbf{Kz}}
\newcommand{\smodf}{S^\bullet\dash\modules_\f}
\newcommand{\smodhom}[1][V]{\hom_{S^\bullet}^{#1}}
\newcommand{\smodaut}[1][V]{S^\bullet(V)\dash\modules_{\aut(#1)}}

\newcommand{\lfrak}{\mathfrak{L}}
\newcommand{\kfrak}{\mathfrak{K}}
\newcommand{\torsion}{\mathbf{tors}_v}
\newcommand{\cotorsion}{\mathbf{cotors}_v}
\newcommand{\annih}{\mathbf{ann}_v}
\newcommand{\qfrak}{\mathfrak{Q}}

\newcommand{\Pzed}{P_{\zed}}
\newcommand{\Pzedtwo}{P_{\zed_2}}
\newcommand{\Pbarzed}{\overline{P}_\zed}
\newcommand{\Pbarzedtwo}{\overline{P}_{\zed_2}}
\newcommand{{\ab}}{\mathscr{A}b}

\newcommand{\vs}{\mathscr{V}}
\newcommand{\fdvs}{{\mathscr{V}^f}}
\newcommand{\f}{\mathscr{F}}
\newcommand{\ff}{\mathscr{FA}}

\newcommand{\op}{^{\mathrm{op}}}
\newcommand{\obj}{\mathrm{Ob}\hspace{1pt}}
\newcommand{\ext}{\mathrm{Ext}}

\newcommand{\aut}{\mathrm{Aut}}
\newcommand{\nat}{\mathbb{N}}
\newcommand{\modules}{\mathbf{mod}}
\renewcommand{\hom}{\mathrm{Hom}}
\renewcommand{\phi}{\varphi}
\renewcommand{\epsilon}{\varepsilon}
\newcommand{\symm}{\mathfrak{S}}
\newcommand{\zed}{\mathbb{Z}}
\newcommand{\zedloc}[1][p]{\zed_{(p)}}
\newcommand{\dash}{\hspace{-2pt}-\hspace{-2pt}}
\newcommand{\field}{\mathbb{F}}
\newcommand{\cala}{\mathcal{A}}
\newcommand{\Ibar}{\overline{I}_\field}
\newcommand{\Pbar}{\overline{P}_\field}
\begin{document}

\begin{abstract}The connective $ku$-(co)homology of elementary abelian $2$-groups is determined as a functor of the elementary abelian $2$-group. The argument requires only the calculation of the rank one case and the Atiyah-Segal theorem for $KU$-cohomology together with an analysis of the functorial structure of the integral group ring. The methods can also be applied to the odd primary case.

These results are used to analyse the local cohomology spectral sequence calculating $ku$-homology, via a functorial version of local duality for Koszul complexes. This gives a conceptual explanation of results of Bruner and Greenlees.
\end{abstract}
\maketitle

\section{Introduction}

The calculation of the  $ku$-(co)homology of finite groups is an
interesting and highly non-trivial problem. The case of elementary
abelian $p$-groups illustrates important features; these groups were first
calculated by Ossa \cite{ossa} and were studied further by Bruner and Greenlees 
\cite{bruner_greenlees}, exhibiting a form of duality via local cohomology.
Neither of these references exploit the full naturality of the functors $V \mapsto ku^* (BV_+) $
and $V  \mapsto ku_* (BV_+)$.

This paper shows how studying these as functors of the elementary abelian $p$-group $V$ gives a new and conceptual approach. The methods apply to any
prime $p$; the case $p=2$ is privileged here since this requires an additional filtration argument when studying the local cohomology. Moreover, this is the case of interest when extending the methods to the study of $ko$-(co)homology (cf. \cite{bg2}); this will be developed elsewhere.  The main results of the first part of the paper give complete descriptions of the functors $V \mapsto ku^* (BV_+) $ (Theorem \ref{thm:ku_cohom_BV}) and $V\mapsto ku_* (BV_+)$ (Theorem \ref{thm:ku_homology_BV}).

For $ku$-cohomology, the only input which is required is the graded abelian group structure of  $ku^* (B\zed/2_+)$  and the identification of the functor $V \mapsto KU^0 (BV_+) $ for periodic complex $K$-theory, which is provided by the Atiyah-Segal completion theorem. In particular, the method gives a conceptual proof of an algebraic form of Ossa's theorem \cite{ossa}, which gives a reduction to the rank one case. Both Ossa \cite{ossa} and  Bruner and Greenlees \cite{bruner_greenlees} use Ossa's theorem as a starting point for their calculations.

To give a full functorial description,  the integral group ring functor $V \mapsto \zed[V]$ is studied, stressing the functorial viewpoint and extending results of Passi and others \cite{passi_vermani}.  Of independent interest is the observation that the quotients which arise from studying the filtration of $\zed[V]$ by powers of the augmentation ideal are self-dual under Pontrjagin duality (see Theorems \ref{thm:pontrjagin_duality} and \ref{thm:Q-self-dual}).

Similarly, for $ku$-homology,  only  knowledge  of $ku_* (B\zed/2_+)$ is required, together with an understanding of the functor $V \mapsto KU_1 (BV_+)$. The arguments make explicit the functorial nature of the duality between $ku^*(BV_+)$ and $ku_* (BV_+)$, via  Pontrjagin duality.

The second part of the paper applies these results to give an analysis of the local cohomology spectral sequence relating $ku^* (BV_+)$ to $ku_* (BV_+)$ (see Theorem \ref{thm:local_cohom_ss}); this sheds light upon the description given by  Bruner and Greenlees \cite{bruner_greenlees}: local duality appears as an explicit functor defined in the functorial context. The key observation which explains the origin of the differentials in the local cohomology spectral sequence comes from the analysis of $ku^* (BV_+)$, which shows how the $v$-torsion $\torsion ku^*(BV_+)$ and the $v$-cotorsion of  $ku^* (BV_+)$  are related.

 The functorial description of $V \mapsto ku^* (BV_+) $ identifies the mod-$p$ cohomology of the spaces of the $\Omega$-spectrum for $ku$ (up to nilpotent unstable modules), via Lannes' theory \cite{lannes}. This gives a conceptual framework for understanding the results of \cite{stong,singer}, and can be related to the description of the mod-$p$ homology in terms of Hopf rings \cite{hara} (which does not {\em a priori} retain information on the action of the Steenrod algebra). This will be explained elsewhere.

\tableofcontents
\section{Background}

\subsection{Definitions and notation}

Fix  a prime $p$ and let   $\field$ denote the prime field $\field_p$;  $\fdvs$ denotes 
the full subcategory of finite-dimensional spaces in the category $\vs$ of $\field$-vector spaces and  vector space duality is denoted by $(-)^\sharp : \vs \op \rightarrow \vs$. 

\begin{nota}
The category of functors from  $\fdvs$ to abelian groups is denoted $\ff$ and the full subcategory of functors
with values in $\vs$ is denoted $\f$. 
\end{nota}

The categories $\ff, \f$ are tensor abelian, with structure induced from $\ab$. (For
basic properties of $\f$, see \cite{kuhn1,kuhn2,kuhn3} or \cite{ffss}.) There is an exact Pontrjagin duality functor which generalizes the duality for $\f$ introduced in \cite{kuhn1}:

\begin{defn}
\label{def:Pontrjagin}
 Let $D : \ff \op \rightarrow \ff$ be the functor defined on $F \in \ff$ by 
\[
 D F (V ) : = \hom_{\ab} (F (V^{\sharp}), \zed /p^\infty).
\]
\end{defn}

Recall that the socle of an object is its largest semi-simple subobject and the head its largest semi-simple quotient.

\begin{exam}
The symmetric powers, divided powers and exterior powers are fundamental examples of (polynomial) functors in $\f$.
For $n \in \nat$, the $n$th symmetric power functor $S^n$ is defined by $S^n (V) := (V^{\otimes n})/\symm_n$, the $n$th divided power functor by $ \Gamma^n (V) := (V^{\otimes n})^{\symm_n}$ and the $n$th exterior power functor identifies as $\Lambda^n (V) \cong (V^{\otimes n } \otimes \mathrm{sign})^{\symm_n}$, where $\mathrm{sign}$ is the sign representation of $\symm_n$. By convention, these functors are  zero for negative integers $n$. There is a duality relation $S^n \cong D \Gamma^n$, whereas the functor $\Lambda^n$ is self-dual (there is a canonical isomorphism $D \Lambda^n \cong \Lambda^n$). For $p=2$, the functor $\Lambda^n$ is the head of $S^n$ and the socle of $\Gamma^n$. 

These functors are examples of graded exponential functors; for example, the exponential structure  induces natural coproducts $S^n \stackrel{\Delta}{\rightarrow} S^ i\otimes S^j $ and products $S^i \otimes S^j \stackrel{\mu}{\rightarrow} S^n$, for
integers $i+j =n$, and, upon evaluation on $V \in \obj \fdvs$, these correspond to the  primitively-generated Hopf algebra structure on a polynomial algebra.
\end{exam}

\begin{exam}
\label{exam:standard_injective}
Yoneda's lemma provides the standard injective and projective objects of $\f$ (the case of $\ff$ is considered in Section \ref{sect:groupring}).  The projective functor $P_\field$ is the functor $V \mapsto \field [V]$, which corepresents  evaluation at $\field$; the injective functor $I_\field$ is given by $V \mapsto \field^{V^\sharp}$ (the
vector space of set maps) and represents the dual evaluation functor $F \mapsto DF(\field)$. Duality provides the relation $I_\field \cong D P_\field$, which relates the  canonical decompositions 
$I_\field \cong \field \oplus \Ibar$ and $P_\field \cong \field \oplus \Pbar$, where $\field$
is the constant functor and $\Ibar$ (respectively $\Pbar$) is the complementary constant-free summand.

The functor $I_\field$ is ungraded exponential and has associated diagonal $\Delta : I_\field \rightarrow I_\field
\otimes I_\field$ and multiplication $\mu : I_\field \otimes I_\field \rightarrow I_\field$; these morphisms induce 
$\Ibar \rightarrow \Ibar \otimes \Ibar $ and $\Ibar \otimes \Ibar \rightarrow \Ibar$ respectively and, in both cases,
these are the unique non-trivial morphisms of the given form. Dually, there is a product $\Pbar \otimes \Pbar
\rightarrow \Pbar $ and  coproduct $\Pbar \rightarrow \Pbar \otimes \Pbar$. 
\end{exam}

\begin{nota}
 For $n >0$ an integer:
\begin{enumerate}
 \item 
let $\Pbar^n$ be the image of the iterated product $\mu^{(n-1)} : \Pbar ^{\otimes n}
\rightarrow \Pbar$ and $q_{n-1} \Pbar$ denote its cokernel, so that there is a short exact sequence 
\[
 0
\rightarrow 
\Pbar^n \rightarrow
\Pbar
\rightarrow
q_{n-1} 
\Pbar
\rightarrow 
0;
\]
\item
dually, let $p_{n-1} \Ibar$ denote the kernel of the iterated diagonal $\Ibar \stackrel{\Delta^{(n)}}{\rightarrow} \Ibar ^{\otimes n}$.
\end{enumerate}
\end{nota}

\begin{lem}
 \label{lem:p=2_inj_proj}
\cite{kuhn2}
Suppose that $p=2$. Then 
\begin{enumerate}
 \item 
$\Ibar$ is the injective envelope of $\Lambda^1$, is uniserial,  $\Ibar \cong  \mathrm{colim}_{\rightarrow}  p_n \Ibar$,  and there are non-split short exact sequences 
\[
 0 
\rightarrow p_n \Ibar 
\rightarrow
p_{n+1}\Ibar 
\rightarrow 
\Lambda^{n+1}
\rightarrow 0;
\]
\item
$\Pbar$ is the projective cover of $\Lambda^1$,  is uniserial, $ \Pbar \cong \lim_\leftarrow q_n \Pbar$,  and there are short exact sequences 
\[
 0 
\rightarrow
\Lambda^{n+1}
\rightarrow
q_{n+1} \Pbar 
\rightarrow 
q_n \Pbar 
\rightarrow 
0.
\]
\end{enumerate}
The functor $q_n\Pbar$ is the dual of $ p_{n} \Ibar$. 
\end{lem}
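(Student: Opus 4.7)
My plan is to deduce part (2) from the algebraic identification of $\field_2[V]$ with an exterior algebra, then to obtain part (1) by Pontrjagin duality, pivoting on the final duality assertion of the lemma. Accordingly I would first treat that duality assertion, since it is used throughout: the functor $D$ is exact and (on the finitely-generated subcategory containing $\Pbar$, $\Ibar$ and their tensor powers) is compatible with tensor products, so it exchanges the product and coproduct of these exponential functors. Concretely, $D$ sends the iterated product $\mu^{(n-1)} : \Pbar^{\otimes n} \to \Pbar$ to the iterated coproduct $\Delta^{(n-1)} : \Ibar \to \Ibar^{\otimes n}$; dualising the defining short exact sequence $0 \to \Pbar^n \to \Pbar \to q_{n-1}\Pbar \to 0$ then identifies $D(q_{n-1}\Pbar)$ with $\ker \Delta^{(n-1)} = p_{n-1}\Ibar$.

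For part (2), the starting point at $p=2$ is that $([v]-1)^2 = 0$ in $\field_2[V]$ (since $[v]^2 = [2v] = 1$ and the characteristic is $2$), so the augmentation-ideal filtration of $\Pbar(V)$ has natural associated graded given by the exterior algebra: setting $y_v := [v]-1$, the assignment $y_{v_1}\cdots y_{v_n} \leftrightarrow v_1 \wedge \cdots \wedge v_n$ yields a natural isomorphism $\Pbar^n / \Pbar^{n+1} \cong \Lambda^n$, from which the short exact sequences in (2) follow at once. Since $\Pbar^n(V) = 0$ for $n > \dim V$, the filtration is exhaustive on each finite-dimensional $V$, and $\Pbar \cong \lim_\leftarrow q_n \Pbar$.

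To complete (2) I would next verify that $\Pbar$ is the projective cover of $\Lambda^1$ and is uniserial. The functor $P_\field$ is projective since it corepresents evaluation at $\field$, hence so is its direct summand $\Pbar$; applying $\hom_\f(\Pbar, -)$ to a simple $\Lambda^n$ yields (the reduced part of) $\Lambda^n(\field)$, which vanishes for $n \geq 2$, so $\Lambda^1$ is the unique simple quotient of $\Pbar$. Uniseriality then follows by induction on $n$ from the short exact sequences in (2): each extension $0 \to \Lambda^n \to q_n \Pbar \to q_{n-1}\Pbar \to 0$ must be non-split (otherwise $q_n \Pbar$ would acquire a second simple quotient), and together with pairwise non-isomorphism of the $\Lambda^k$ this forces the socle of $q_n \Pbar$ to be precisely $\Lambda^n$, so that the subfunctor lattice is a chain. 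All the statements for $\Ibar$ in part (1) then transfer via the duality assertion just established.

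The main obstacle is establishing uniseriality cleanly: the inductive step must rule out stray simple subfunctors of $q_n \Pbar$ not contained in $\Lambda^n$, which ultimately relies on controlling $\hom_\f$ and $\ext^1_\f$ between the simples $\Lambda^k$ in $\f$. This is precisely the delicate analysis carried out by Kuhn in \cite{kuhn2}, and the claim in the lemma is a packaging of his results.
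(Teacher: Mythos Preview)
The paper does not supply a proof of this lemma: it is stated with a citation to \cite{kuhn2} and no argument is given. Your sketch is a reasonable and essentially correct outline of how such a proof would go --- the identification $\Pbar^n/\Pbar^{n+1}\cong\Lambda^n$ via $([v]-1)^2=0$ in $\field_2[V]$ is standard, the duality between $q_n\Pbar$ and $p_n\Ibar$ follows as you say from exactness of $D$ and its interchange of product and coproduct, and the computation of $\hom_\f(\Pbar,\Lambda^n)$ correctly identifies the head. You are also right to flag uniseriality as the genuinely non-formal step and to defer to Kuhn for it: the non-splitting argument you give shows the head stays simple, but controlling the socle at each stage (ruling out a copy of some $\Lambda^k$ with $k<n$ appearing as a second summand in the socle of $q_n\Pbar$) does require the $\ext^1$ computations between exterior powers carried out in \cite{kuhn2}. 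Since the paper itself simply invokes Kuhn, your proposal is at least as detailed as what the paper provides.
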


The fact that $p_n \Ibar$ has a simple socle (for $n >0$) implies that it is easy to detect non-triviality of a subobject:
 
\begin{lem}
\label{lem:subfunctor_pdI_qdP}
Suppose that $p=2$ and  let $n>0$ be an integer.
\begin{enumerate}
 \item 
If $G \subset p_n \Ibar$,  then $G =0$ if and only if $G(\field) =0$. 
\item
If $H \subset q_n \Pbar$,  then $H= q_n \Pbar$ if and only if $H (\field)
\neq 0$.
\end{enumerate}
\end{lem}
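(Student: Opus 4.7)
The plan is to exploit the uniserial structure of $p_n\Ibar$ and $q_n\Pbar$ provided by Lemma \ref{lem:p=2_inj_proj}, together with the elementary observation that $\Lambda^1(\field) = \field$ while $\Lambda^k(\field) = 0$ for $k \geq 2$ (since higher exterior powers of a one-dimensional vector space vanish).

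For part (1), I would note that as a subfunctor of the uniserial functor $\Ibar$, $p_n\Ibar$ is itself uniserial; since $p_n\Ibar \supseteq p_1\Ibar \cong \Lambda^1$ for $n \geq 1$, its simple socle is $\Lambda^1$. Any nonzero subfunctor $G \subset p_n\Ibar$ must therefore contain this socle, so $G(\field) \supseteq \Lambda^1(\field) = \field \neq 0$; contrapositively, $G(\field) = 0$ forces $G = 0$, and the converse implication is trivial.

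For part (2), the short exact sequences of Lemma \ref{lem:p=2_inj_proj}(2) together with uniseriality of $\Pbar$ show that $q_n\Pbar$ is uniserial, and the projective cover $\Pbar \twoheadrightarrow \Lambda^1$ factors through $q_n\Pbar$, identifying $\Lambda^1$ as the head of $q_n\Pbar$ (this can also be read off by Pontrjagin duality from $p_n\Ibar$). The unique maximal proper subfunctor is then the kernel of the surjection $q_n\Pbar \twoheadrightarrow \Lambda^1$, whose composition factors $\Lambda^2, \ldots, \Lambda^n$ all vanish on $\field$. Any proper subfunctor $H \subsetneq q_n\Pbar$ is therefore contained in this kernel and so satisfies $H(\field) = 0$, which gives the nontrivial direction by contraposition; the converse is immediate from the surjection $q_n\Pbar(\field) \twoheadrightarrow \Lambda^1(\field) = \field$, showing in particular that $q_n\Pbar(\field) \neq 0$. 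The main point, and the only real subtlety, is recognizing that evaluation at $\field$ detects the socle in (1) and nothing below the head in (2); beyond this, the result is a direct consequence of the preceding lemma, so I foresee no genuine obstacle.
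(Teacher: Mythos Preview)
Your argument is correct and follows essentially the same idea as the paper: the simple socle $\Lambda^1$ of $p_n\Ibar$ is detected by evaluation at $\field$, which gives (1) immediately. The only difference is that the paper dispatches (2) in one line by observing that duality $D$ interchanges the two statements, whereas you give a direct proof of (2) via the head of $q_n\Pbar$; both are fine, and your version makes the mechanism in (2) more explicit at the cost of a few extra lines.
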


\begin{proof}
The two statements are equivalent by duality, hence it suffices to prove the first. 
The socle of $p_n \Ibar $ is $\Lambda^1$; if $G \neq 0$ then $\Lambda^1 \subset G$, hence $G(\field) \neq 0$, since
$\Lambda ^1 (\field) = \field$. The
converse is obvious. 
\end{proof}

\begin{rem}
 There are analogous  results  for odd primes, taking into account the weight splitting of the category $\f$ provided by the action of the units $\field_p^\times$ (cf. \cite{kuhn1}).
\end{rem}

\section{The integral group ring functor}
\label{sect:groupring}

This section provides a functorial analysis of the structure of the integral group ring functor;  throughout, the prime is taken to be $2$ (there are analogous results for odd primes). These results are necessary to complete the full functorial description of the $ku$-(co)homology of elementary abelian $2$-groups but are not required for the proof of the algebraic version of Ossa's theorem.

\subsection{The functors $\Pbarzed$, $\Pbarzedtwo$}
\label{subsect:Pbarzed}

\begin{nota} Let 
\begin{enumerate}
 \item 
$\Pzed$ denote the integral group ring functor $V \mapsto \zed [V]$ and $\Pbarzed$  the augmentation ideal, so that there is a direct sum decomposition $\Pzed \cong \zed \oplus \Pbarzed$ in $\ff$;
\item
$\Pzedtwo$ denote the functor $\zed_2 \otimes \Pzed$ and $\Pbarzedtwo$ the functor $\zed_2 \otimes \Pbarzed$, where $\zed_2$ denotes the $2$-adic integers.
\end{enumerate}
\end{nota}

Yoneda's lemma implies: 

\begin{lem}
\label{lem:yoneda}
 The functor $\Pzed$ is projective in $\ff$ and corepresents  evaluation on $\field$.
\end{lem}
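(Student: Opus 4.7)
The plan is to recognize both claims as direct consequences of Yoneda's lemma, once $\Pzed(V) = \zed[V]$ has been reinterpreted in representable terms. The key observation is that for any $V \in \obj \fdvs$, the underlying set of $V$ is naturally identified with $\hom_{\fdvs}(\field, V)$ via evaluation at $1 \in \field$. Consequently $\zed[V] \cong \zed[\hom_{\fdvs}(\field, V)]$, naturally in $V$, which exhibits $\Pzed$ as the composite of the Yoneda functor $\hom_{\fdvs}(\field, -)$ with the free abelian group functor $\zed[-] : \mathbf{Set} \to \ab$.

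For the corepresentability, I would compute $\hom_\ff(\Pzed, F)$ for an arbitrary $F \in \ff$ by chaining two adjunctions. Pointwise, a natural transformation $\eta : \Pzed \to F$ is a $\zed$-linear map $\zed[V] \to F(V)$; by the free-forgetful adjunction, this is equivalent to a set map $V \to F(V)$, and naturality in $V$ promotes this to a natural transformation of set-valued functors $\hom_{\fdvs}(\field, -) \to U \circ F$, where $U$ denotes the forgetful functor from abelian groups to sets. Ordinary Yoneda then identifies such natural transformations with elements of $UF(\field) = F(\field)$, the bijection sending $\eta$ to $\eta_\field([1]) \in F(\field)$. This yields the natural isomorphism $\hom_\ff(\Pzed, F) \cong F(\field)$, so $\Pzed$ corepresents evaluation on $\field$.

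Projectivity then follows formally. The isomorphism $\hom_\ff(\Pzed, -) \cong \mathrm{ev}_\field$ identifies the relevant hom-functor with evaluation at $\field$, and evaluation at any object is an exact functor since short exact sequences, kernels and cokernels in $\ff$ are computed pointwise (this is standard for functor categories with values in an abelian category). Hence $\hom_\ff(\Pzed, -)$ is exact, which is the definition of projectivity. There is no substantive obstacle in this proof; it is pure bookkeeping, and the one point that merits explicit mention is the identification $V \cong \hom_{\fdvs}(\field, V)$ that recasts $\Pzed$ as (the free abelian group on) a representable set-valued functor.
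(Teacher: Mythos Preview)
Your argument is correct and is precisely the standard Yoneda argument the paper invokes; the paper itself offers no proof beyond the phrase ``Yoneda's lemma implies,'' and your proposal simply unpacks that in full detail. There is nothing to add.
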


The ring structure of $\zed [V]$ gives a morphism 
$\mu : \Pzed \otimes \Pzed \rightarrow \Pzed$, 
which induces $\mu : \Pbarzed \otimes \Pbarzed \rightarrow \Pbarzed$. There is a reduced diagonal $\Delta : \Pbarzed \rightarrow \Pbarzed \otimes \Pbarzed$;  the composition with the canonical projection $\Pzed \twoheadrightarrow \Pbarzed$ is determined by the element $([1] - [0]) \otimes ([1]- [0]) \in ( \Pbarzed \otimes \Pbarzed )(\field)$, by Yoneda.

\begin{defn}
 For $n\in \nat $, let $\Pbarzed^n$ denote the image of the iterated product
 $
 \mu^{(n-1)} : \Pbarzed^{\otimes n} 
\rightarrow 
\Pbarzed
$ 
(respectively  $\Pbarzedtwo^n \subset \Pbarzedtwo$). 
\end{defn}

The structure of $\Pbarzed(V)$ and the  filtration
\[
 \ldots \subset \Pbarzed^{n+1} (V) \subset \Pbarzed^n (V) \subset \ldots \subset \Pbarzed^1(V) = \Pbarzed(V)
\]
for a fixed $V$ has received much attention (see
\cite{passi_vermani,bak_vavilov}, for instance). These references do not exploit functoriality.

\begin{lem}
\label{lem:proj_mod_2}
\ 
\begin{enumerate}
 \item 
There is a natural isomorphism $
 (\Pbarzedtwo )/2 \cong \Pbar.
$
\item
For $n \in \nat$,  the canonical surjection $\Pbarzedtwo \twoheadrightarrow \Pbar$
induces a commutative diagram in $\ff$:
\[
 \xymatrix{
\Pbarzedtwo^n 
\ar@{^(->}[r]
\ar@{->>}[d]
&
\Pbarzedtwo
\ar@{->>}[d]
\\
\Pbar^n 
\ar@{^(->}[r]
&
\Pbar.
}
\]
\item
The head of $\Pbarzedtwo$ is the functor $\Lambda^1$.
\end{enumerate}
\end{lem}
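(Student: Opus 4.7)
The plan is to address the three statements in sequence, with (3) depending on the identification established in (1). For (1), I would unpack definitions: since $\Pbarzedtwo := \zed_2 \otimes_\zed \Pbarzed$ by definition, reducing mod $2$ gives $(\Pbarzedtwo)/2 \cong (\zed_2/2) \otimes_\zed \Pbarzed \cong \field \otimes_\zed \Pbarzed$. Under the canonical isomorphism $\field \otimes_\zed \zed[V] \cong \field[V]$, the integral augmentation ideal is sent to the $\field$-augmentation ideal, yielding the claimed natural isomorphism $(\Pbarzedtwo)/2 \cong \Pbar$. For (2), the multiplication $\mu$ is defined naturally on $\Pzed$ and descends to compatible multiplications on $\Pbarzedtwo$ and on $\Pbar$ under the reduction map; hence the iterated product $\mu^{(n-1)}$ on $\Pbarzedtwo^{\otimes n}$ reduces modulo $2$ to the iterated product on $\Pbar^{\otimes n}$. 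Combining this with (1) and the surjectivity of reduction mod $2$ on images, the commutative square follows, with $\Pbarzedtwo^n$ surjecting onto $\Pbar^n$.

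For (3), the strategy is to identify $\Lambda^1$ as the unique simple quotient of $\Pbarzedtwo$. Let $S$ be any simple quotient; then $2S \subseteq S$ is a subfunctor, so by simplicity either $2S = 0$ or $2S = S$. Because $\Pbarzedtwo(V) = \zed_2 \otimes_\zed \Pbarzed(V)$ is a finitely generated $\zed_2$-module for each $V \in \obj \fdvs$, each value $S(V)$ is a finitely generated module over the local ring $\zed_2$; Nakayama's lemma then rules out the $2$-divisible case $2S = S$ unless $S = 0$. So $2S = 0$, and the surjection $\Pbarzedtwo \twoheadrightarrow S$ factors through $\Pbarzedtwo/2 \cong \Pbar$ by (1). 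The conclusion then follows from Lemma \ref{lem:p=2_inj_proj}, which identifies the head of $\Pbar$ with $\Lambda^1$.

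The main subtlety to navigate lies in the simple-quotient analysis for (3): the category $\ff$ takes values in abelian groups rather than in $\field$-vector spaces, so \emph{a priori} simple quotients of $\Pbarzedtwo$ need not be $2$-torsion. The finite generation of each value as a $\zed_2$-module is what makes Nakayama applicable, reducing the problem to the already-understood $\field$-linear setting of Lemma \ref{lem:p=2_inj_proj}. By contrast, parts (1) and (2) should be essentially formal, following once the definitions and the induced multiplicative structures are unpacked.
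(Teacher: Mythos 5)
Your proof is correct and follows the same route as the paper, which simply notes that the diagram comes from the ring morphism $\zed[V]\rightarrow\field[V]$ and declares the remaining statements clear. Your Nakayama argument for part (3) is a valid and welcome filling-in of that ``clear'' step: it correctly handles the fact that a simple (or semisimple) quotient of $\Pbarzedtwo$ must be annihilated by $2$, reducing the identification of the head to that of $\Pbar$ via Lemma \ref{lem:p=2_inj_proj}.
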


\begin{proof}
The commutative diagram follows from the fact that $\zed  \rightarrow \field$ induces a morphism of group rings $\zed [V] \rightarrow \field [V]$; the remaining statements are clear.
\end{proof}

\begin{lem}
 \label{lem:2_times_filt}
The composite $\Pbarzed \stackrel{\Delta} {\rightarrow }\Pbarzed \otimes \Pbarzed \stackrel{\mu}{\rightarrow} \Pbarzed$
is the morphism $\Pbarzed \stackrel{-2}{\rightarrow} \Pbarzed$. Hence, for  $n \in \nat$, there are inclusions
of subobjects of $\Pbarzed$:
$
 2 \Pbarzed^{n} \subset \Pbarzed^{n+1} 
$
and  $2^n \Pbarzed \subset \Pbarzed^{n+1}$. 

\end{lem}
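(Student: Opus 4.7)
The plan is to reduce the identity $\mu\circ\Delta=-2\cdot\mathrm{id}$ on $\Pbarzed$ to a single evaluation at $[1]-[0]\in\Pbarzed(\field)$, and then to deduce the filtration inclusions by a formal diagram chase. By the Yoneda statement of Lemma \ref{lem:yoneda}, any natural transformation out of $\Pzed$ is determined by its value on $[1]\in\Pzed(\field)$; precomposing with the split surjection $\pi\colon\Pzed\twoheadrightarrow\Pbarzed$, $[v]\mapsto[v]-[0]$, it follows that two natural transformations $\Pbarzed\to\Pbarzed$ coincide as soon as they agree on $[1]-[0]$. Using the characterisation of the reduced diagonal recalled just before the statement, namely $\Delta([1]-[0])=([1]-[0])\otimes([1]-[0])$, together with the ring structure of $\zed[\field_2]$ and the relation $1+1=0$ in $\field_2$ (whence $[1]\cdot[1]=[0]$), I would expand
\[
\mu\circ\Delta([1]-[0])\;=\;([1]-[0])\cdot([1]-[0])\;=\;[2]-2[1]+[0]\;=\;2[0]-2[1]\;=\;-2\bigl([1]-[0]\bigr),
\]
which matches $-2\cdot\mathrm{id}([1]-[0])$ and so establishes $\mu\circ\Delta=-2$.

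For the filtration inclusion, I would consider the composite $\mu^{(n)}\circ(\Delta\otimes\mathrm{id}^{\otimes(n-1)})\colon\Pbarzed^{\otimes n}\to\Pbarzed$ and use associativity of $\mu$ to rewrite $\mu^{(n)}$ as $\mu^{(n-1)}\circ(\mu\otimes\mathrm{id}^{\otimes(n-1)})$; the composite then equals $\mu^{(n-1)}\circ((\mu\circ\Delta)\otimes\mathrm{id}^{\otimes(n-1)})=-2\,\mu^{(n-1)}$. The image of the left-hand expression lies in $\Pbarzed^{n+1}$ (since it factors through $\mu^{(n)}$, whose image is by definition $\Pbarzed^{n+1}$), while the image of the right-hand side is $2\Pbarzed^n$; hence $2\Pbarzed^n\subset\Pbarzed^{n+1}$, and a straightforward induction then produces $2^n\Pbarzed\subset\Pbarzed^{n+1}$.

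I expect no serious obstacle: the whole argument reduces to the observation that $[2]=[0]$ inside $\zed[\field_2]$, which is what converts the naive expansion $[2]-2[1]+[0]$ into the integer multiple $-2([1]-[0])$ and so injects the prime $2$ into the filtration inclusions via the base-case identity $\mu\circ\Delta=-2$.
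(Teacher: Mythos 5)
Your proof is correct and follows the same route the paper takes (the paper simply declares the identity $\mu\circ\Delta=-2$ ``straightforward'', citing Bak--Vavilov, and then deduces the inclusions exactly as you do, via $2\Pbarzed^n\subset\Pbarzed^{n+1}$ and induction). Your Yoneda reduction to the single element $[1]-[0]$ and the expansion $([1]-[0])^2=-2([1]-[0])$ in $\zed[\field_2]$ correctly supply the details the paper omits.
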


\begin{proof}
 The first statement is straightforward (cf. \cite[Lemma 3.2]{bak_vavilov}); this gives rise to the natural inclusion $
 2 \Pbarzed^{n} \subset \Pbarzed^{n+1}
$. The final statement follows by induction.
\end{proof}

\begin{lem}
\label{lem:Pbarzed_filt_quot_surject}
For $n \in \nat$, there is a unique non-trivial morphism $\Pbarzed^n
\rightarrow \Ibar$ in $\ff$ and this
induces a surjection 
\[
 \Pbarzed^n /\Pbarzed^{n+1} 
\stackrel{}{\twoheadrightarrow}
p_n \Ibar.
\]
\end{lem}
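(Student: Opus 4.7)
My plan combines Yoneda's lemma with the Hopf algebra structures on both functors. For the base case $n=1$, since $\Pbarzed$ is projective in $\ff$ (Lemma~\ref{lem:yoneda}) and $\Ibar(\field)\cong\field$ (from $I_\field\cong\field\oplus\Ibar$), Yoneda yields a unique non-trivial morphism $\phi:\Pbarzed\to\Ibar$. This $\phi$ factors through $\Pbarzed/2\cong\Pbar$ (as $\Ibar$ is $2$-torsion) and then through $\Pbar\twoheadrightarrow\Lambda^1=p_1\Ibar\hookrightarrow\Ibar$ (using that $\Pbar$ has head $\Lambda^1$ and $\Ibar$ has socle $\Lambda^1$, by Lemma~\ref{lem:p=2_inj_proj}). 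Since $\Pbar^2\subset\ker(\Pbar\twoheadrightarrow\Lambda^1)$, $\phi$ annihilates $\Pbarzed^2$ and induces $\Pbarzed/\Pbarzed^2\twoheadrightarrow p_1\Ibar$.

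For $n\geq 2$, I construct $\phi_n:\Pbarzed^n\to\Ibar$ as the descent of the composite
\[
\tilde\phi_n:\Pbarzed^{\otimes n}\xrightarrow{\phi^{\otimes n}}\Ibar^{\otimes n}\xrightarrow{\mu_I^{(n-1)}}\Ibar,
\]
with $\mu_I$ the product on $\Ibar$ from Example~\ref{exam:standard_injective}, along the surjection $\mu^{(n-1)}:\Pbarzed^{\otimes n}\twoheadrightarrow\Pbarzed^n$. Since $\phi$ factors through $\Pbar\twoheadrightarrow\Lambda^1$, $\tilde\phi_n$ factors through the iterated product $(\Lambda^1)^{\otimes n}\to\Ibar$; the image automatically lies in $p_n\Ibar$ by Hopf compatibility (placing $n$ primitives into $n+1$ tensor slots via the $(n+1)$-fold coproduct always leaves at least one slot occupied by the constant, so the reduced $(n+1)$-fold diagonal vanishes on the image). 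The vanishing of $\phi_n$ on $\Pbarzed^{n+1}$ is then immediate: any $(n+1)$-fold product rewrites as $a_1\cdots a_{n+1}=\mu^{(n-1)}((a_1a_2)\otimes a_3\otimes\cdots\otimes a_{n+1})$, so $\phi_n(a_1\cdots a_{n+1})=\phi(a_1a_2)\cdot\phi(a_3)\cdots\phi(a_{n+1})=0$ using $a_1a_2\in\Pbarzed^2\subset\ker\phi$.

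Surjectivity of the induced $\bar\phi_n:\Pbarzed^n/\Pbarzed^{n+1}\to p_n\Ibar$ follows by a head analysis: $p_n\Ibar$ is uniserial with head $\Lambda^n$ (Lemma~\ref{lem:p=2_inj_proj}), so its non-zero subfunctors form the chain $p_1\Ibar\subset\cdots\subset p_n\Ibar$, and $\mathrm{im}(\bar\phi_n)=p_n\Ibar$ iff the composite $\Pbarzed^n/\Pbarzed^{n+1}\to p_n\Ibar\twoheadrightarrow\Lambda^n$ is non-trivial. This composite agrees (up to scalar) with the natural surjection $\Pbarzed^n/\Pbarzed^{n+1}\twoheadrightarrow\Pbar^n/\Pbar^{n+1}=\Lambda^n$, which is non-trivial (for instance, $(v_1-1)\cdots(v_n-1)$ with linearly independent $v_i$ has non-zero class at $V=\field^n$). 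Uniqueness follows from the same cross-effect analysis: any non-trivial $\Pbarzed^n\to\Ibar$ has image in $p_n\Ibar$ (by applying the Hopf argument to its precomposition with $\mu^{(n-1)}$) and is then determined up to scalar by Lemma~\ref{lem:subfunctor_pdI_qdP}(1). The chief obstacle is verifying the descent of $\tilde\phi_n$ through $\mu^{(n-1)}$: since $\phi$ is not a ring homomorphism, this relies on the interplay between Lemma~\ref{lem:2_times_filt} (the integral relations defining $\Pbarzed^n$ reduce modulo $2$ to the relations cutting out $\Pbar^n$) and the commutative product on $\Ibar$ restricted to $\Lambda^1$.
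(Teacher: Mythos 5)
Your overall strategy (reduce mod $2$, push everything through $\Pbar\twoheadrightarrow\Lambda^1\hookrightarrow\Ibar$ and the product on $\Ibar$, then identify the image as $p_n\Ibar$) is aligned with the paper's, but the logical organization leaves a genuine gap that you yourself flag and never close: the descent of $\tilde\phi_n$ along $\mu^{(n-1)}:\Pbarzed^{\otimes n}\twoheadrightarrow\Pbarzed^n$. To define $\phi_n$ on $\Pbarzed^n$ this way you must show that $\tilde\phi_n$ annihilates $\ker\mu^{(n-1)}$, and since $\phi$ is not multiplicative this is a real computation (it amounts to the fact that the character ring homomorphism $\zed[V]\to\mathrm{Fun}(V^\sharp,\zed)$, $[v]\mapsto(\xi\mapsto(-1)^{\xi(v)})$, sends $\Pbarzed^n(V)$ into $2^n\cdot\mathrm{Fun}(V^\sharp,\zed)$, and that dividing by $(\pm2)^n$ and reducing mod $2$ recovers $\tilde\phi_n$ on products). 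Naming this as ``the chief obstacle'' is not the same as overcoming it; as written, $\phi_n$ is not known to exist for $n\geq 2$, and everything downstream (vanishing on $\Pbarzed^{n+1}$, surjectivity onto $p_n\Ibar$) presupposes it.

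The paper avoids the descent entirely by running the argument in the opposite direction. Since $\Ibar$ is annihilated by $2$, any morphism $\Pbarzed^n\to\Ibar$ factors through $(\Pbarzed^n)/2$, which takes the value $\field$ on $\field$ and $0$ on $0$; Yoneda (the representability of $F\mapsto DF(\field)$ by $I_\field$) then gives existence \emph{and} uniqueness of the non-trivial morphism in one stroke. Only afterwards does one precompose with the surjection $\Pbarzed^{\otimes n}\twoheadrightarrow\Pbarzed^n$ --- to compute the image of a map already known to exist, not to construct it --- and recognize the composite as the iterated product of $\Pbar\twoheadrightarrow\Lambda^1\hookrightarrow\Ibar$, whose image is $p_n\Ibar$. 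You should also repair your uniqueness step: Lemma \ref{lem:subfunctor_pdI_qdP}(1) constrains subfunctors of $p_n\Ibar$, not morphisms into it (two distinct morphisms can share the image $p_n\Ibar$); uniqueness should instead come from the Yoneda count above, or from the uniqueness of the non-trivial morphism $\Pbar^{\otimes n}\to\Ibar$ combined with the surjectivity of $\mu^{(n-1)}$.
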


\begin{proof}
 A morphism $\Pbarzed^n \rightarrow \Ibar$ factors naturally across $(\Pbarzed^n) /2$. It is straightforward to see
that  $\big((\Pbarzed^n) /2\big)( \field) = \field$ and $\big((\Pbarzed^n) /2\big)( 0) = 0$; this implies that there is
a unique non-trivial morphism $(\Pbarzed^n) /2 \rightarrow \Ibar$, by Yoneda.
 
The composite $\Pbarzed^{\otimes n} \twoheadrightarrow \Pbarzed^n \rightarrow \Ibar$ factorizes across the projection
$\Pbarzed^{\otimes n} \twoheadrightarrow \Pbar^{\otimes n}$. Again, there is a unique non-trivial morphism from
$\Pbar^{\otimes n}$ to $\Ibar$; for $n=1$, this is the composite $\Pbar \twoheadrightarrow \Lambda^1 \hookrightarrow
\Ibar$ and, for $n>1$, 
\[
 \Pbar^{\otimes n} 
\rightarrow 
\Ibar^{\otimes n} 
\stackrel{\mu^{(n-1)}}{\longrightarrow}\Ibar, 
\]
where the first morphism is the iterated tensor product of the morphism $\Pbar \rightarrow \Ibar$; it follows easily that  the composite  has image $p_n\Ibar$. Hence the morphism $\Pbarzed^n \rightarrow \Ibar$ has image $p_n \Ibar$, by Lemma \ref{lem:p=2_inj_proj}.

Finally, the fact that $\Pbar^2 $ is the kernel of $\Pbar \twoheadrightarrow \Lambda^1$ implies that the composite
\[
 \Pbarzed  ^{n+1} 
\hookrightarrow
\Pbarzed^n 
\rightarrow 
\Ibar
\]
is trivial, giving the stated factorization.
\end{proof}

A non-functorial version of the following result (expressed using different notation) is proved in \cite{passi_vermani}. Since the functorial version is required here, a direct proof is given.

\begin{prop}
\label{prop:subquotients_Pbarzed}
 For $n>0$  an integer, the following  statements hold:
\begin{enumerate}
\item
there is a short exact sequence:
\[
 0 \rightarrow 2 \Pbarzed^{n} \rightarrow \Pbarzed^{n+1} \rightarrow
\Pbar^{n+1} \rightarrow 0;
\] 
\item 
the canonical morphism $\Pbarzed^{n}/\Pbarzed^{n+1} \rightarrow
p_n\Ibar$ is an isomorphism.
\end{enumerate}
\end{prop}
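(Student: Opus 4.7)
The plan is to derive both assertions from a direct coefficient computation at $V = (\zed/2)^r$, lifted to a functorial statement because all the maps involved are natural. Setting $y_i = [e_i] - [0]$ and $y_S := \prod_{i \in S} y_i$, iteration of the relation $y_i^2 = -2 y_i$ (the one underlying Lemma \ref{lem:2_times_filt}) yields the product formula
\[
y_S \cdot y_T = (-2)^{|S \cap T|}\, y_{S \cup T},
\]
whence any $n$-fold product $y_{S_1}\cdots y_{S_n}$ equals $(-2)^{\sum|S_i|-|\bigcup S_i|}\, y_{\bigcup S_i}$. Optimising this exponent over partitions with fixed union $S$ shows that $\Pbarzed^n(V)$ admits the $\zed$-basis
\[
\bigl\{\, 2^{\max(0,\, n - |S|)}\, y_S : \emptyset \neq S \subseteq \{1, \ldots, r\} \,\bigr\},
\]
with linear independence inherited from that of the $y_S$ in $\Pbarzed(V)$.

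Given this basis, part (1) is verified coordinatewise. The surjection $\Pbarzed^{n+1} \twoheadrightarrow \Pbar^{n+1}$ is supplied by Lemma \ref{lem:proj_mod_2}; its kernel inside $\Pbarzed$ is $\Pbarzed^{n+1} \cap 2\Pbarzed$, and the inclusion $2\Pbarzed^n \subseteq \Pbarzed^{n+1}\cap 2\Pbarzed$ is Lemma \ref{lem:2_times_filt}. The reverse inclusion is a direct check: the coefficient of $y_S$ in $\Pbarzed^{n+1} \cap 2\Pbarzed$ is $2^{\max(1,\, n+1-|S|)}\zed$, which equals $2^{\max(0, n-|S|)+1}\zed$, the coefficient of $y_S$ in $2\Pbarzed^n$.

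Part (2) then reduces to a dimension count. The surjection $\Pbarzed^n/\Pbarzed^{n+1} \twoheadrightarrow p_n \Ibar$ comes from Lemma \ref{lem:Pbarzed_filt_quot_surject}; the basis description shows $\dim_\field (\Pbarzed^n/\Pbarzed^{n+1})(V) = \sum_{k=1}^{n}\binom{r}{k}$ (one $\zed/2$ summand per non-empty $S$ with $|S| \leq n$). By Lemma \ref{lem:p=2_inj_proj}, $p_n \Ibar$ has composition factors $\Lambda^1, \ldots, \Lambda^n$, so $\dim_\field p_n\Ibar(V)$ is the same. A surjection between finite-dimensional $\field$-vector spaces of equal dimension is an isomorphism, so the Lemma \ref{lem:Pbarzed_filt_quot_surject} map is an iso pointwise and hence in $\ff$.

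The principal obstacle is the basis description: one must verify that the minimal $2$-adic valuation appearing in front of $y_S$ in $\Pbarzed^n(V)$ is exactly $\max(0, n-|S|)$. The lower bound $\sum|S_i| \geq \max(n, |S|)$ is forced by the non-emptiness of each $S_i$ together with the constraint $\bigcup S_i = S$; the matching upper bound is achieved by an explicit choice of partition of $S$ (when $|S| \geq n$) or by expanding using repeated singletons of $S$ (when $|S| < n$).
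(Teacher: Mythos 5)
Your proof is correct, but it takes a genuinely different route from the paper's. The paper proves the two statements simultaneously by induction on $n$, working in the Grothendieck group of $\ff$: the inclusion $2\Pbarzed^{N}\hookrightarrow \Pbarzed^{N+1}$ gives a lower bound for $[\Pbarzed^{N+1}/2^{N+1}\Pbarzed]$, the surjection of Lemma \ref{lem:Pbarzed_filt_quot_surject} gives an upper bound, and the two bounds agree precisely when both statements hold. You instead produce an explicit $\zed$-basis of $\Pbarzed^{n}(V)$ for $V=\field^r$ --- essentially the classical Passi--Vermani description that the paper cites but deliberately does not reproduce --- and check both assertions coordinatewise, using the (correct) observation that exactness and isomorphy of natural transformations in $\ff$ are detected objectwise, so the non-naturality of the basis is harmless. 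Your approach is more elementary and yields strictly more information: the full lattice description $\Pbarzed^{n}(V)=\bigoplus_{S\neq\emptyset}2^{\max(0,\,n-|S|)}\zed\, y_S$ also gives Corollary \ref{cor:completion} immediately. The paper's argument is basis-free and stays within the functor category, at the price of the more delicate two-sided bounding induction. The only points you leave implicit are routine: the unitriangularity argument showing that $\{y_S : S\neq\emptyset\}$ is indeed a $\zed$-basis of $\Pbarzed(V)$, and the fact that $\Pbarzed^{n+1}(V)\cap 2\Pbarzed(V)$ may be computed coordinatewise because both subgroups are diagonal with respect to that basis.
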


\begin{proof}
 The statements are proved  in parallel  by induction upon $n$. For
 $n=1$, consider the commutative diagram
\[
 \xymatrix{
2 \Pbarzed 
\ar@{=}[d]
\ar@{^(->}[r]
&
\Pbarzed^2 
\ar@{^(->}[d]
\ar@{->>}[r]
&
\Pbarzed^2/2 \Pbarzed
\ar@{^(->}[d]
\\
2 \Pbarzed
\ar@{^(->}[r]
&
\Pbarzed
\ar@{->>}[r]
&
\Pbar.
}
\]
The image of $\Pbarzed^2$ in $\Pbar$ is $\Pbar^2$, by Lemma
\ref{lem:proj_mod_2}, which proves the first statement. The second statement
follows by studying the cokernels of the monomorphisms.

For the inductive step, suppose  the result true for  $n < N$.
Lemma \ref{lem:2_times_filt} provides a natural 
inclusion $2^i \Pbarzed \subset \Pbarzed^{i+1}$, for all natural numbers $i$;
for current purposes, it is sufficient to work with the inclusion $2^{i+1}
\Pbarzed \subset \Pbarzed^{i+1}$. The proof proceeds by providing upper and
lower bounds for $\Pbarzed^{N+1} / 2^{N+1} \Pbarzed$. It is sufficient to do
this in the Grothendieck group of $\ff$, since  the functors considered below only have finitely many composition factors of a given isomorphism type, which allows comparison arguments. (In fact, evaluation on finite dimensional vector spaces allows the arguments to be reduced to objects which are finite.) 

The element of the Grothendieck group associated to an object $F$ is denoted $[F]$; for the functors considered here, this lies in the submonoid $\prod_\lambda
\nat$ indexed by the isomorphism classes of simple objects $S_\lambda$ of $\ff$; for two objects $M = \Sigma
M_\lambda [S_\lambda ]$ and $N = \Sigma N_\lambda [S_\lambda]$, write $M \leq N$ if $M_\lambda \leq
N_\lambda$, for all $\lambda$.

The inductive hypothesis implies that: 
\[
 \big[ 
\Pbarzed^{N} / 2^{N} \Pbarzed
\big] 
= 
\sum _{i=1}^N
\big[
\Pbar^{i}
\big]
\]
in the Grothendieck group.

The inclusion $2\Pbarzed^N \hookrightarrow \Pbarzed^{N+1}$ induces a
monomorphism 
$$
(\Pbarzed^N /2^{N}\Pbarzed) \cong 
2\Pbarzed^N /2^{N+1}\Pbarzed \hookrightarrow
\Pbarzed^{N+1}/2^{N+1} \Pbarzed;$$
 the composite of this morphism with the
projection $\Pbarzed^{N+1} /2^{N+1} \Pbarzed \twoheadrightarrow \Pbar^{N+1}$ is
clearly trivial, hence this gives a lower bound for $\Pbarzed^{N+1} / 2^{N+1}
\Pbarzed$:
\[
\sum _{i=1}^{N+1}
\big[
\Pbar^{i}
\big] 
\leq 
\big[\Pbarzed^{N+1} / 2^{N+1}
\Pbarzed
\big], 
\]
with equality if and only if the first statement holds.

Consider the inclusion $\Pbarzed^{N+1} \hookrightarrow \Pbarzed^N$, which
induces the monomorphism 
\[
 \Pbarzed^{N+1} / 2^{N+1} \Pbarzed \hookrightarrow \Pbarzed^N /2^{N+1} \Pbarzed
\]
with cokernel which surjects to $p_N \Ibar$, by Lemma
\ref{lem:Pbarzed_filt_quot_surject};  this gives the inequality:
\begin{eqnarray}
\label{eqn:inequality}
 \big[
\Pbarzed^N /2^{N+1} \Pbarzed
\big]
\geq 
\big[
\Pbarzed^{N+1} / 2^{N+1} \Pbarzed
\big]
+ 
\big[
p_N\Ibar 
\big],
\end{eqnarray}
with equality if and only if the second statement
holds.

The short exact sequence
\[
 0 
\rightarrow 
\Pbar \cong (2^N \Pbarzed
/2^{N+1} \Pbarzed )
\rightarrow 
\Pbarzed^N
/2^{N+1} \Pbarzed 
\rightarrow 
\Pbarzed^N
/2^{N} \Pbarzed 
\rightarrow 
0
\]
and the inductive hypothesis give:
\begin{eqnarray*}
 \big[ 
\Pbarzed^{N} / 2^{N+1} \Pbarzed
\big] 
&=& 
[\Pbar ] + 
\sum _{i=1}^N
\big[
\Pbar^{i}
\big].
\end{eqnarray*}
Now 
$\big[
\Pbar 
\big ] 
= 
\big[ \Pbar ^{N+1} \big ] 
+   
\big[p_N \Ibar\big]
$, hence (\ref{eqn:inequality}) implies that
\[
\sum _{i=1}^{N+1}
\big[
\Pbar^{i}
\big] \geq \big[\Pbarzed^{N+1} / 2^{N+1} \Pbarzed \big],
\]
with equality if and only if the second statement
holds. Hence, both inequalities are equalities and  the inductive step is established.
\end{proof}

\begin{cor}
\label{cor:completion}
For $V \in \obj \fdvs$,  the topologies on the abelian
group $\Pbarzed(V)$ induced by the $2$-adic filtration
$2^i \Pbarzed$ and by the filtration $\Pbarzed^i$ are equivalent. 
\end{cor}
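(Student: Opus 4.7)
The plan is to verify that the two filtrations of $\Pbarzed(V)$ are cofinal, i.e. that every $2^i \Pbarzed(V)$ contains some $\Pbarzed^j(V)$ and vice versa. One of these comparisons is essentially given for free: Lemma \ref{lem:2_times_filt} provides the inclusion $2^i \Pbarzed \subset \Pbarzed^{i+1}$, which shows immediately that each $\Pbarzed^{i+1}(V)$ already contains the $2$-adic neighborhood $2^i \Pbarzed(V)$. Only the reverse comparison requires work.

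For this, I would use the short exact sequence
\[
0 \to 2\Pbarzed^n \to \Pbarzed^{n+1} \to \Pbar^{n+1} \to 0
\]
of Proposition \ref{prop:subquotients_Pbarzed}, evaluated on a fixed $V \in \obj \fdvs$ of dimension $d$. The essential input is that the right-hand term vanishes for $n$ sufficiently large: $\Pbar^{n+1}(V)$ is the $(n+1)$st power of the augmentation ideal of the finite group algebra $\field[V]$, which is isomorphic to the truncated polynomial algebra $\field[y_1,\dots,y_d]/(y_i^2)$, and this ideal is nilpotent with $\Pbar^{n+1}(V) = 0$ for every $n \geq d$.

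With this vanishing in hand the short exact sequence collapses to the equality $\Pbarzed^{n+1}(V) = 2\Pbarzed^n(V)$ for every $n \geq d$, and iterating yields $\Pbarzed^{d+k}(V) = 2^k\,\Pbarzed^d(V) \subset 2^k \Pbarzed(V)$ for every $k \geq 0$; setting $j = d + i$ then produces the required inclusion $\Pbarzed^j(V) \subset 2^i \Pbarzed(V)$. I do not anticipate any genuine obstacle: once Proposition \ref{prop:subquotients_Pbarzed} and the nilpotency of the augmentation ideal of $\field[V]$ are in place, the argument is essentially formal. The only subtlety worth flagging is that one must fix $V$, so that $d = \dim V$ is finite and the nilpotency bound is uniform in $n$; the statement is genuinely pointwise, not functorial.
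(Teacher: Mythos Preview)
Your proposal is correct and follows essentially the same route as the paper: one inclusion comes from Lemma~\ref{lem:2_times_filt}, and for the reverse you use the short exact sequence of Proposition~\ref{prop:subquotients_Pbarzed} together with the vanishing of $\Pbar^{n+1}(V)$ for $n \geq \dim V$ to obtain $\Pbarzed^{d+k}(V) \subset 2^k \Pbarzed(V)$, which is exactly the paper's conclusion $\Pbarzed^{n+k}(\field^n) \subset 2^k \Pbarzed(\field^n)$ (taking $V = \field^n$, so $d = n$). You have simply made explicit the nilpotency argument that the paper leaves as ``straightforward.''
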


\begin{proof}
By Lemma \ref{lem:2_times_filt}, $2^n \Pbarzed \subset \Pbarzed^{n+1}$; conversely, it is straightforward to show, using Proposition \ref{prop:subquotients_Pbarzed}, that
 $
 \Pbarzed^{n+k} (\field^n) \subset 2^k \Pbarzed(\field^n).
$
\end{proof}

\subsection{The structure of the  quotients $\Pbarzed / \Pbarzed^{n+1}$}
\label{subsect:R_functors}

\begin{nota}
 For $n  \in \nat$, let $R^n_\zed$ denote the quotient $\Pbarzed / \Pbarzed^{n+1}$.
\end{nota}

\begin{lem}
\label{lem:Q_basis_properties}
For $n>0$ an integer: 
\begin{enumerate}
\item
$2^{n+1}R^n_\zed=0$ and $R^n _\zed (\field) \cong \zed/2^n$;
\item 
there are short exact sequences:
\begin{eqnarray*}
 &&
 0 
\rightarrow 
p_n \Ibar \rightarrow
R^n _\zed \rightarrow
R^{n-1}_\zed
\rightarrow 0
\\
&&
 0
\rightarrow
R^{n-1}_\zed
\rightarrow
R^n 
_\zed
\rightarrow
q_n\Pbar
\rightarrow
0;
\end{eqnarray*}
\item
the largest subfunctor $_2R^n _\zed $ of $R^n _\zed$  annihilated by $2$ is isomorphic to $p_n\Ibar$.
\end{enumerate}
\end{lem}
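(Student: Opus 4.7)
The plan is to prove the three parts in order, with a single intersection identity, $\Pbarzed^{n+1} \cap 2\Pbarzed = 2\Pbarzed^n$, doing the bulk of the work for parts (2) and (3).

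For part (1), I would simply evaluate at $V=\field$. Since $\Pbarzed(\field)$ is freely generated as a $\zed$-module by $[1]-[0]$ and the ring relation in $\zed[\field]$ gives $([1]-[0])^2 = 2[0]-2[1] = -2([1]-[0])$, an immediate induction shows $\Pbarzed^{n+1}(\field) = 2^n\Pbarzed(\field)$, whence $R^n_\zed(\field) \cong \zed/2^n$. The annihilation $2^{n+1}R^n_\zed = 0$ is then an \emph{a fortiori} consequence of the stronger inclusion $2^n \Pbarzed \subset \Pbarzed^{n+1}$ supplied by Lemma \ref{lem:2_times_filt}.

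For part (2), the first short exact sequence is immediate: the inclusion $\Pbarzed^{n+1}\subset \Pbarzed^n$ yields $0\to \Pbarzed^n/\Pbarzed^{n+1}\to R^n_\zed\to R^{n-1}_\zed\to 0$, with the kernel identified as $p_n\Ibar$ by Proposition \ref{prop:subquotients_Pbarzed}(2). The second sequence requires more care. Multiplication by $2$ on $\Pbarzed$ sends $\Pbarzed^n$ into $\Pbarzed^{n+1}$ by Lemma \ref{lem:2_times_filt}, inducing a natural morphism $R^{n-1}_\zed \to R^n_\zed$; its cokernel computes as $\Pbarzed/(2\Pbarzed + \Pbarzed^{n+1}) \cong \Pbar/\Pbar^{n+1} = q_n\Pbar$ by Lemma \ref{lem:proj_mod_2}. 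The main technical obstacle, and in my view the only non-formal step in the whole lemma, is injectivity: I must show that $2\tilde x \in \Pbarzed^{n+1}$ forces $\tilde x \in \Pbarzed^n$. For this I would extract from Proposition \ref{prop:subquotients_Pbarzed}(1) the identity $\Pbarzed^{n+1}\cap 2\Pbarzed = 2\Pbarzed^n$ (since $\Pbarzed^{n+1}/2\Pbarzed^n \cong \Pbar^{n+1}$ is precisely the image of $\Pbarzed^{n+1}$ along $\Pbarzed \twoheadrightarrow \Pbar = \Pbarzed/2\Pbarzed$). Then $2\tilde x = 2y$ with $y\in\Pbarzed^n$, and the conclusion follows from the torsion-freeness of $\Pbarzed$, each $\Pbarzed(V)$ being a $\zed$-free submodule of $\zed[V]$.

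For part (3), the inclusion $p_n\Ibar \subset {}_2R^n_\zed$ is automatic: via part (2), $p_n\Ibar$ sits inside $R^n_\zed$, and as a subfunctor of $\Ibar$ it takes values in $\field$-vector spaces, so is annihilated by $2$. For the reverse inclusion, I would lift $x \in R^n_\zed(V)$ with $2x = 0$ to $\tilde x \in \Pbarzed(V)$; then $2\tilde x \in \Pbarzed^{n+1}\cap 2\Pbarzed = 2\Pbarzed^n$, and torsion-freeness once more gives $\tilde x \in \Pbarzed^n$, so $x$ lies in the image of $\Pbarzed^n/\Pbarzed^{n+1} \cong p_n\Ibar$. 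Thus the same intersection identity used for injectivity in (2) delivers (3) as an immediate consequence, so the entire lemma reduces to the evaluation in (1) together with the one structural identity on $\Pbarzed^{n+1} \cap 2\Pbarzed$.
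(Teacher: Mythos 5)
Your argument is correct. Parts (1) and (2) follow essentially the paper's own route: the evaluation $R^n_\zed(\field) \cong \zed/2^n$ via $([1]-[0])^2 = -2([1]-[0])$, the first exact sequence read off from Proposition \ref{prop:subquotients_Pbarzed}, and the second built from the inclusion $2\Pbarzed \hookrightarrow \Pbarzed$ together with the identity $2\Pbarzed \cap \Pbarzed^{n+1} = 2\Pbarzed^{n}$ extracted from Proposition \ref{prop:subquotients_Pbarzed}(1) --- which is exactly the paper's phrasing. Where you genuinely diverge is part (3). The paper argues by induction on $n$: the first short exact sequence yields $0 \rightarrow p_n\Ibar \rightarrow {}_2R^n_\zed \rightarrow p_{n-1}\Ibar$, and the image in $p_{n-1}\Ibar$ is shown to vanish by the socle-detection Lemma \ref{lem:subfunctor_pdI_qdP} after evaluating on $\field$ and using $R^n_\zed(\field) \cong \zed/2^n$. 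You instead reuse the same intersection identity: a lift $\tilde{x}$ of a $2$-torsion class satisfies $2\tilde{x} \in \Pbarzed^{n+1} \cap 2\Pbarzed = 2\Pbarzed^{n}$, and torsion-freeness of $\Pbarzed(V) \subset \zed[V]$ forces $\tilde{x} \in \Pbarzed^{n}$. Your version is more economical: one structural fact drives both the injectivity in (2) and the whole of (3), and it avoids the induction and the appeal to the simple socle of $p_n\Ibar$, at the modest cost of working with elements rather than staying at the level of exact sequences of functors. This is harmless, since intersections and $2$-torsion subobjects in $\ff$ are computed objectwise, so the pointwise verification does establish the statement about subfunctors.
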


\begin{proof}
The first statement is clear and the first short exact sequence is provided by Proposition
\ref{prop:subquotients_Pbarzed}. The second short exact sequence is induced by the inclusion $2 \Pbarzed
\hookrightarrow \Pbarzed$, since Proposition  \ref{prop:subquotients_Pbarzed} implies that $2 \Pbarzed \cap
\Pbarzed^{n+1}$ is isomorphic to $\Pbarzed^n$ under the isomorphism $\Pbarzed \cong 2 \Pbarzed$.

The proof that the largest subfunctor of $R^n _\zed$ annihilated by $2$ is $p_n \Ibar$ is by induction on $n$; for $n=1$, $R^1_\zed \cong \Lambda^1$ and the result is immediate.
For the inductive step, the short exact sequence
\[ 0
\rightarrow
p_n \Ibar \rightarrow
R^n _\zed \rightarrow
R^{n-1}_\zed
\rightarrow 0,
\]
implies that there is an exact sequence
\[
	0
\rightarrow
p_n \Ibar \rightarrow
\ _2R^n _\zed \rightarrow
p_{n-1} \Ibar,
\]
where the right hand term is given by the inductive hypothesis. To complete the result, it suffices to show that the image of $_2R^n _\zed$ in $p_{n-1} \Ibar$ is trivial; hence, by Lemma \ref{lem:subfunctor_pdI_qdP}, it suffices to show this after evaluation on $\field$. This follows  from the fact that $R^n _\zed (\field) \cong \zed/2^n$.
\end{proof}

\begin{lem}
\label{lem:Q_simple_head_socle}
For $n>0$ an integer, the functor $R^n _\zed$ has simple
head $\Lambda^1$ and simple socle $\Lambda^1$. 
\end{lem}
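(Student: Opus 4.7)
The plan is to handle the head and socle separately, in each case using the fact from Lemma \ref{lem:Q_basis_properties}(1) that $R^n_\zed$ is $2^{n+1}$-torsion to reduce to the mod-$2$ structure of $\Pbar$ and $\Ibar$ provided by Lemma \ref{lem:p=2_inj_proj}. The key preliminary observation is that any simple subquotient $S$ of a $2^N$-torsion functor in $\ff$ is annihilated by $2$: the subfunctor $2S \subset S$ must be proper, since $2S = S$ would iterate to $S = 2^N S = 0$, and by simplicity $2S = 0$. Consequently both the head and the socle of $R^n_\zed$ are semi-simple $2$-torsion functors.

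For the head, the preliminary observation gives $\mathrm{head}(R^n_\zed) = \mathrm{head}(R^n_\zed/2R^n_\zed)$, since these are mutual semi-simple quotients. Using Lemma \ref{lem:proj_mod_2}, the quotient $R^n_\zed/2R^n_\zed$ factors as $\Pbarzed \twoheadrightarrow \Pbar \twoheadrightarrow R^n_\zed/2R^n_\zed$, so it is a quotient of $\Pbar$. Since $\Pbar$ has simple head $\Lambda^1$ by Lemma \ref{lem:p=2_inj_proj}, its quotients have head at most $\Lambda^1$, so either zero or $\Lambda^1$; non-triviality is supplied by the second sequence of Lemma \ref{lem:Q_basis_properties}(2), which provides a surjection $R^n_\zed \twoheadrightarrow q_n\Pbar$, and $q_n\Pbar$ in turn surjects onto $q_1\Pbar = \Lambda^1$.

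For the socle, the same preliminary places $\mathrm{socle}(R^n_\zed)$ inside $_2R^n_\zed$, which Lemma \ref{lem:Q_basis_properties}(3) identifies as $p_n\Ibar$. Since $\Ibar$ is the injective envelope of $\Lambda^1$ by Lemma \ref{lem:p=2_inj_proj}, the socle of any nonzero subfunctor of $\Ibar$ is $\Lambda^1$; in particular, the socle of $p_n\Ibar$ is $\Lambda^1$, giving $\mathrm{socle}(R^n_\zed) \subseteq \Lambda^1$. Non-triviality follows from the first sequence of Lemma \ref{lem:Q_basis_properties}(2), which exhibits an inclusion $p_n\Ibar \hookrightarrow R^n_\zed$ carrying the socle $\Lambda^1$ with it. The only substantive ingredient is Lemma \ref{lem:Q_basis_properties}(3) itself, which was already established by induction; granting it, both assertions follow essentially formally from the two short exact sequences of Lemma \ref{lem:Q_basis_properties}(2) together with the structural lemma for $\Pbar$ and $\Ibar$.
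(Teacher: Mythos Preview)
Your proof is correct. For the head, your argument coincides with the paper's, which simply notes that $R^n_\zed$ is a nonzero quotient of $\Pbarzedtwo$, whose head is $\Lambda^1$ (Lemma \ref{lem:proj_mod_2}(3)); your reduction mod~$2$ amounts to the same thing, just with the passage through $\Pbar$ made explicit.

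For the socle, the approaches diverge. The paper proceeds by a direct induction on $n$: the short exact sequence $0 \to p_n\Ibar \to R^n_\zed \to R^{n-1}_\zed \to 0$ forces the socle to be either $\Lambda^1$ or $\Lambda^1 \oplus \Lambda^1$, and the latter is ruled out because $R^n_\zed(\field) \cong \zed/2^n$ has a unique order-$2$ element. You instead invoke Lemma \ref{lem:Q_basis_properties}(3), which identifies ${}_2R^n_\zed$ with $p_n\Ibar$, and observe that the socle of $R^n_\zed$ (being semi-simple and $2$-torsion) must sit inside ${}_2R^n_\zed$ and hence inside $\mathrm{socle}(p_n\Ibar) = \Lambda^1$. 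This is more economical: the inductive work has already been packaged into Lemma \ref{lem:Q_basis_properties}(3), and as you note, once that lemma is granted the socle claim is essentially formal. The paper's argument here is in effect a second pass over the same induction used to prove Lemma \ref{lem:Q_basis_properties}(3), so your route avoids a mild redundancy.
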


\begin{proof}
 The functor $R^n_\zed$ is a non-trivial quotient of $\Pbarzedtwo$, which has simple head $\Lambda^1$ by
Lemma \ref{lem:proj_mod_2}, hence $R^n _\zed$ has simple head $\Lambda^1$. 

The proof that the socle is $\Lambda^1$ is by induction on $n$, starting from
the case $n=1$, which is clear, since $R^1_\zed \cong \Lambda ^1$ is simple. 
The exact sequence $0 \rightarrow p_n\Ibar \rightarrow R^n _\zed \rightarrow
R^{n-1} _\zed \rightarrow 0$ shows that the socle of $R^n_\zed$ is either
$\Lambda^1$ or $\Lambda^1 \oplus \Lambda^1$. The latter possibility is excluded
since $R^n_\zed (\field) = \zed/2^n$, by Lemma \ref{lem:Q_basis_properties}.
\end{proof}

\begin{thm}
\label{thm:Q-self-dual}
 Let $n>0$ be an integer. The functor $R^n _\zed$ is self-dual;
more precisely, any surjection 
 $
 \Pbarzedtwo \twoheadrightarrow 
DR^n _\zed
$ 
factors canonically across an isomorphism $R^n _\zed
\stackrel{\cong}{\rightarrow} DR^n _\zed$.
\end{thm}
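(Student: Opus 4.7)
First, I would reduce the problem to understanding morphisms $\Pbarzedtwo \to DR^n_\zed$ via Yoneda. Since $R^n_\zed$ is annihilated by $2^{n+1}$ (Lemma \ref{lem:Q_basis_properties}), $DR^n_\zed$ takes values in $2$-primary torsion groups, so Lemma \ref{lem:yoneda} identifies $\hom (\Pbarzedtwo , DR^n_\zed)$ with $DR^n_\zed (\field) \cong \hom_\ab (\zed/2^n, \zed/2^\infty) \cong \zed/2^n$. Any surjection $\phi$ is surjective on $\field$, so corresponds under this identification to a generator of $\zed/2^n$.

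Next, I would show $\phi$ factors through the canonical projection $\pi : \Pbarzedtwo \twoheadrightarrow R^n_\zed$, that is $\phi|_{\Pbarzed^{n+1}} = 0$. The image is a subfunctor of $DR^n_\zed$; applying duality to Lemma \ref{lem:Q_simple_head_socle} shows that $DR^n_\zed$ has simple socle $\Lambda^1$, so repeating the proof of Lemma \ref{lem:subfunctor_pdI_qdP} gives that nonzero subfunctors of $DR^n_\zed$ have nonzero value on $\field$. On $\field$, the map $\phi_\field : \zed_2 \twoheadrightarrow \zed/2^n$ is (up to unit) the natural quotient, so kills $2^n \zed_2$; and $\Pbarzed^{n+1}(\field) = 2^n \zed$, which follows from Lemma \ref{lem:2_times_filt}, Proposition \ref{prop:subquotients_Pbarzed}, and the observation that $p_i\Ibar (\field) = \field$ (since the composition factors of $p_i\Ibar$ are $\Lambda^1, \dots, \Lambda^i$ and only $\Lambda^1$ contributes on $\field$).

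This yields a canonical $\bar\phi : R^n_\zed \to DR^n_\zed$ (canonical because $\pi$ is epi). To see $\bar\phi$ is an isomorphism: on $\field$, $\bar\phi_\field$ sends generator to generator of $\zed/2^n$, so is an isomorphism; since $R^n_\zed$ has simple socle $\Lambda^1$ (Lemma \ref{lem:Q_simple_head_socle}), a subfunctor is zero iff it vanishes on $\field$, so $\ker(\bar\phi) = 0$. Surjectivity then follows from a length count: $R^n_\zed$ and $DR^n_\zed$ have the same composition factors (each $\Lambda^j$, $1 \leq j \leq n$, with multiplicity $n - j + 1$), visible from Proposition \ref{prop:subquotients_Pbarzed}, Lemma \ref{lem:p=2_inj_proj}, and self-duality of the $\Lambda^j$; an injection between finite-length objects of equal length is an isomorphism.

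The main obstacle in this plan is setting up cleanly the dual of Lemma \ref{lem:subfunctor_pdI_qdP} for $DR^n_\zed$ and pinning down $\Pbarzed^{n+1}(\field) = 2^n \zed$; once these are in place, the length argument is essentially bookkeeping, and canonicality is automatic from the fact that $\pi$ is epi.
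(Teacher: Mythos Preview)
Your argument is correct and takes a genuinely different route from the paper's proof. The paper proceeds by induction on $n$: given a surjection $\Pbarzedtwo \twoheadrightarrow DR^n_\zed$, it builds a map of short exact sequences from $\Pbarzedtwo \stackrel{2}{\to} \Pbarzedtwo \to \Pbar$ to $DR^{n-1}_\zed \hookrightarrow DR^n_\zed \twoheadrightarrow q_n\Pbar$, invokes the inductive hypothesis on the left vertical to conclude that $2\Pbarzedtwo^{n}$ (and hence $\Pbarzedtwo^{n+1}$, via Proposition~\ref{prop:subquotients_Pbarzed}) lies in the kernel, and finishes with the composition-factor count. You instead bypass the induction entirely: you use that $DR^n_\zed$ has simple socle $\Lambda^1$ to reduce the vanishing of the image of $\Pbarzedtwo^{n+1}$ to a check on $\field$, where it is immediate since $\Pbarzedtwo^{n+1}(\field) = 2^n\zed_2$ is exactly the kernel of any surjection $\zed_2 \twoheadrightarrow \zed/2^n$.

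Your approach is cleaner for the statement as written. What the paper's inductive proof buys, however, is the explicit compatibility of the self-duality isomorphisms with the canonical maps $R^{n-1}_\zed \hookrightarrow R^n_\zed$ and $R^n_\zed \twoheadrightarrow R^{n-1}_\zed$; this is precisely what is invoked in the proof of Theorem~\ref{thm:pontrjagin_duality} (``the latter fact follows from the proof of Theorem~\ref{thm:Q-self-dual}'') to identify the direct system obtained by dualizing the tower of projections with the one defining $R^\infty_\zed$. Your argument does not supply this directly, though it can be recovered afterwards by a uniqueness argument (there is a unique nontrivial map $R^{n-1}_\zed \to R^n_\zed$, since both have simple head $\Lambda^1$). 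One minor point: your appeal to Lemma~\ref{lem:yoneda} for $\Pbarzedtwo$ rather than $\Pzed$ requires the observation that, on targets annihilated by a power of $2$, maps out of $\Pbarzed$ and $\Pbarzedtwo$ agree; this is routine but worth making explicit.
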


\begin{proof}
 The proof is by induction upon $n$, starting from the case $n=1$, when $R^n
_\zed$ is the simple functor $\Lambda^1$, which is self-dual. The factorization
statement follows from the fact that $\Pbarzedtwo$ has simple head.

For the inductive step, $R^n _\zed$ has simple socle $\Lambda ^1$, hence $DR^n
_\zed$ has simple head $\Lambda^1$ and there exists a surjection $\Pbarzedtwo
\twoheadrightarrow DR^n _\zed$ (by projectivity of $\Pbarzed$). This gives rise to a morphism of short exact
sequences
\[
 \xymatrix{
0
\ar[r]
&
\Pbarzedtwo
\ar[r]^2
\ar@{->>}[d]
&
\Pbarzedtwo
\ar@{->>}[d]
\ar[r]
&
\Pbar
\ar@{->>}[d]
\ar[r]
&
0
\\
0
\ar[r]
&
DR^{n-1}_\zed
\ar[r]
&
D R^n _\zed
\ar[r]
&
q_n \Pbar
\ar[r]
&
0,
}
\]
 where the lower exact sequence is the dual of the first exact sequence of
Lemma \ref{lem:Q_basis_properties}, the commutativity of the right hand square
follows from the fact that there is a unique  non-trivial 
morphism $\Pbarzedtwo \rightarrow  q_n\Pbar$ and the surjectivity of the left hand
vertical morphism is seen by evaluating on $\field$, since $DR^{n-1}_\zed$ has simple head.

By the inductive hypothesis, the left hand vertical morphism factorizes across
an isomorphism $R^{n-1}_\zed \stackrel{\cong}{\rightarrow} DR^{n-1}_\zed$. In
particular, this implies that $\Pbarzedtwo^{n+1}$ lies in the kernel of
$\Pbarzedtwo\twoheadrightarrow  DR^n _\zed$, hence this induces a surjection 
\[
 R^n_\zed \twoheadrightarrow DR^n_\zed
\]
which is an isomorphism, since the objects have finite composition
series with isomorphic associated graded functors.
\end{proof}

\begin{defn}
Let $R^\infty_\zed$ be the direct limit of the diagram
\[
 R^1 _\zed \hookrightarrow R^2_\zed \hookrightarrow R^3 _\zed \hookrightarrow
\ldots
\]
of monomorphisms provided by Lemma \ref{lem:Q_basis_properties}. 
\end{defn}

\begin{thm}
\label{thm:pontrjagin_duality}
 There are Pontrjagin duality isomorphisms:
\begin{eqnarray*}
 R^\infty_\zed &\cong& D \Pbarzedtwo \\
\Pbarzedtwo  & \cong & D R^\infty_\zed.
\end{eqnarray*}
\end{thm}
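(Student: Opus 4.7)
The plan is to deduce both isomorphisms from the identification $\Pbarzedtwo \cong \lim_n R^n_\zed$ combined with the self-duality of $R^n_\zed$ established in Theorem \ref{thm:Q-self-dual}.

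First I would establish $\Pbarzedtwo \cong \lim_n R^n_\zed$ in $\ff$, with transition morphisms the canonical surjections $R^{n+1}_\zed \twoheadrightarrow R^n_\zed = \Pbarzed/\Pbarzed^{n+1}$. At each $V \in \obj \fdvs$, Corollary \ref{cor:completion} equates the augmentation-ideal topology on $\Pbarzed(V)$ with the $2$-adic topology, so passage to completions yields a natural isomorphism $\Pbarzedtwo(V) \cong \lim_n \Pbarzed(V)/\Pbarzed^{n+1}(V) = \lim_n R^n_\zed(V)$; this promotes to the claimed identity in $\ff$ by naturality in $V$.

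For the second isomorphism $\Pbarzedtwo \cong DR^\infty_\zed$, the functor $D$ is contravariant and converts colimits into limits (since $\hom_{\ab}(-, \zed/2^\infty)$ does, and colimits in $\ff$ are computed pointwise), so $DR^\infty_\zed \cong \lim_n DR^n_\zed$. Theorem \ref{thm:Q-self-dual} furnishes canonical isomorphisms $R^n_\zed \cong DR^n_\zed$; the uniqueness built into its statement (every surjection $\Pbarzedtwo \twoheadrightarrow DR^n_\zed$ factors canonically across the self-duality isomorphism) ensures that these identifications intertwine the two inverse systems obtained on $(R^n_\zed)$ and $(DR^n_\zed)$ from the injections $R^n_\zed \hookrightarrow R^{n+1}_\zed$. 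Combining with the previous step gives $DR^\infty_\zed \cong \lim_n R^n_\zed \cong \Pbarzedtwo$.

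For the first isomorphism $R^\infty_\zed \cong D\Pbarzedtwo$, I would apply $D$ to the equation just obtained, producing $D\Pbarzedtwo \cong DDR^\infty_\zed$, and identify the right-hand side with $R^\infty_\zed$ via Pontrjagin reflexivity. Indeed, for each $V \in \obj\fdvs$, the value $R^\infty_\zed(V)$ is a discrete torsion $2$-group (a directed union of the finite $2$-groups $R^n_\zed(V)$), and classical Pontrjagin duality, using $\zed/2^\infty = \mathrm{colim}_n \zed/2^n$, supplies a natural isomorphism $A \xrightarrow{\cong} \hom_{\ab}(\hom_{\ab}(A, \zed/2^\infty), \zed/2^\infty)$ for such $A$. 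Assembling over $V$ yields $R^\infty_\zed \cong DDR^\infty_\zed$ in $\ff$, hence the desired isomorphism. The main obstacle is the compatibility invoked in the previous paragraph of the self-dualities $R^n_\zed \cong DR^n_\zed$ with the structure maps of the tower; this is settled by the uniqueness statement in Theorem \ref{thm:Q-self-dual}, rooted in the fact that $\Pbarzedtwo$ has simple head $\Lambda^1$ (Lemma \ref{lem:proj_mod_2}), which rigidifies all nontrivial morphisms out of $\Pbarzedtwo$ into a functor with simple head.
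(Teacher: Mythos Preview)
Your argument follows the same strategy as the paper's: identify $\Pbarzedtwo \cong \lim_n R^n_\zed$ via Corollary~\ref{cor:completion}, invoke the self-duality $R^n_\zed \cong D R^n_\zed$ of Theorem~\ref{thm:Q-self-dual} together with its proof for the compatibility of these isomorphisms with the transition maps, and then pass between the two systems by Pontrjagin duality. The only difference is the order: the paper applies $D$ to the inverse system to obtain $D\Pbarzedtwo \cong R^\infty_\zed$ directly, while you first prove $D R^\infty_\zed \cong \Pbarzedtwo$ (using that $D$ always turns colimits into limits) and then apply $D$ once more. These are equivalent rearrangements.

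There is, however, one imprecision in your reflexivity step. You assert that classical Pontrjagin duality gives $A \cong \hom_\ab(\hom_\ab(A,\zed/2^\infty),\zed/2^\infty)$ for an arbitrary discrete torsion $2$-group $A$. This is false in general: the functor $D$ uses \emph{algebraic} homomorphisms, not continuous ones, and a profinite $2$-group can have non-open subgroups of finite index (take $A=\bigoplus_{\nat}\zed/2$, so $A^*\cong\prod_{\nat}\zed/2$). What makes the argument work here is that $D R^\infty_\zed(V)\cong\Pbarzedtwo(V^\sharp)$ is a \emph{free $\zed_2$-module of finite rank}; hence every finite-index subgroup is open, every algebraic homomorphism to $\zed/2^\infty$ factors through a finite quotient, and the algebraic double dual coincides with the topological one. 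The paper's terse appeal to ``Pontrjagin duality for abelian groups'' skates over the same point, so this is less a divergence from the paper than a place where both treatments could be sharpened.
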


\begin{proof}
Observe that the functor $R^\infty_\zed$ takes values in torsion $2$-groups.  By construction and Corollary \ref{cor:completion}, the functor
$\Pbarzedtwo$ is isomorphic to the inverse limit of the natural system
of quotients 
\[
 \ldots \twoheadrightarrow  R^n_\zed \twoheadrightarrow R^{n-1}_\zed
\twoheadrightarrow \ldots \twoheadrightarrow R^1_\zed \cong \Lambda^1.
\]
Applying the Pontrjagin duality functor and  using the fact that each $R^n
_\zed$ is self dual, gives a direct system which is isomorphic to that defining
$R^\infty_\zed$ (the latter fact follows from the proof of Theorem \ref{thm:Q-self-dual}). The result follows
from  Pontrjagin duality for abelian groups.
\end{proof}

\section{Milnor derivations}
\label{sect:milnor}

This section establishes the fundamental ingredient, Proposition \ref{prop:homology_K_Q1}, to the proof of an algebraic version of Ossa's theorem; the prime $p$ is taken to be $2$.

\subsection{Milnor derivations on symmetric powers}

\begin{defn}
 For $i\in \nat$, let $Q_i : S^1 \rightarrow S^{2^{i+1}}$ denote the
iterated Frobenius $x \mapsto x^{2^{i+1}}$
and also its extension $S^ n  \stackrel{Q_i}{ \rightarrow }S^{n+2^{i+1}
-1}$ to a derivation of $\bigoplus
S^*$,  defined by the composite
\[
 S^n 
\stackrel{\Delta}{\rightarrow }  S^{n-1} \otimes S^1
\stackrel{1 \otimes Q_i } {\rightarrow}
S^{n-1} \otimes S^{2^ {i+1}}
\stackrel{\mu}{\rightarrow }
S^{n +2^{i+1}
-1}.
\]
\end{defn}

\begin{lem}
\label{lem:Q_i_commute}
For $i,j \in \nat$, $Q_i \circ Q_i =0$ and the derivations $Q_i, Q_j$ commute. Hence the graded algebra $\bigoplus S^* $ is a
module in $\f$ over the exterior algebra $\Lambda (Q_i | i \geq 0)$;  in particular,  
 $(S^*, Q_i)$ has the structure of a commutative differential graded algebra in $\f$. 
\end{lem}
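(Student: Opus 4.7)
The plan is to reduce both identities to a calculation on the degree-one generators, via two standard principles. First, a $\field$-linear derivation of the polynomial algebra $S^*(V)$ is determined by its restriction to $S^1(V) = V$, so a natural transformation $\bigoplus S^* \to \bigoplus S^*$ which is a derivation on each $S^*(V)$ and vanishes on $S^1$ must itself vanish. Second, in characteristic $2$ the usual sign issues collapse: if $d, d_1, d_2$ are (graded) derivations of a commutative graded $\field_2$-algebra, then both $d^2$ and $d_1 d_2 + d_2 d_1$ are derivations, since the cross terms in the Leibniz computation cancel modulo $2$.

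The only preliminary check is that the composite $\mu \circ (1 \otimes Q_i|_{S^1}) \circ \Delta$ actually defines a derivation of $\bigoplus S^*$. Evaluating directly on a monomial $x_1 \cdots x_n \in S^n(V)$ with $x_k \in V$, the $(n-1,1)$-component of $\Delta$ produces $\sum_k (x_1 \cdots \widehat{x_k} \cdots x_n) \otimes x_k$; then $1 \otimes (x \mapsto x^{2^{i+1}})$ followed by $\mu$ gives $\sum_k x_1 \cdots x_k^{2^{i+1}} \cdots x_n$, which is precisely the value of the Leibniz extension to $S^*(V)$ of the Frobenius $V \to S^{2^{i+1}}(V)$. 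Naturality in $V \in \obj \fdvs$ is inherited from that of $\Delta$, $\mu$ and the Frobenius.

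With this in place, $Q_i^2$ and $Q_i Q_j + Q_j Q_i$ are natural derivations by the second principle. Evaluating on $x \in V = S^1(V)$, the Leibniz rule gives
\[
Q_i\bigl(x^{2^{j+1}}\bigr) \;=\; 2^{j+1}\, x^{2^{j+1} - 1}\, Q_i(x) \;=\; 0 \text{ in } \field_2
\]
for every $j \geq 0$; taking $j = i$ yields $Q_i^2(x) = 0$, and summing the analogous identity with $i, j$ swapped yields $(Q_i Q_j + Q_j Q_i)(x) = 0$. The first principle then propagates these vanishings to all of $\bigoplus S^*$. The remaining assertions are purely formal: pairwise commuting, square-zero natural transformations $Q_i$ of odd degree generate an action of $\Lambda(Q_i \mid i \geq 0)$ on $\bigoplus S^*$ in $\f$, and for each fixed $i$ the pair $(\bigoplus S^*, Q_i)$ is a commutative graded algebra in $\f$ equipped with a square-zero derivation, i.e.\ a commutative differential graded algebra. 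The main obstacle, modest though it is, is the Leibniz check of paragraph two; after that, everything reduces to the one-line mod-$2$ computation above.
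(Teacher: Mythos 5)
Your proof is correct, and it fills in exactly the details that the paper dismisses with the single word ``Straightforward'': the identification of $\mu\circ(1\otimes Q_i)\circ\Delta$ with the Leibniz extension of the Frobenius, the observation that in characteristic $2$ both $d^2$ and $d_1d_2+d_2d_1$ are again derivations, and the reduction to the vanishing of $Q_i(x^{2^{j+1}})$ on degree-one generators. Nothing to add.
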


\begin{proof}
Straightforward.
\end{proof}

\begin{nota}
 For $j>0$ an integer, let $S^* /\langle x^{2^j}\rangle$ denote the
truncated symmetric power functor, so that 
$S^n /\langle x^{2^j}\rangle$ is the cokernel of the composite 
\[
 S^{n-2^j} \otimes S^1 \stackrel{1 \otimes Q_{j-1}}{\longrightarrow}
S^{n-2^j} \otimes S^{2^j} 
\stackrel{\mu}{\rightarrow}
S^n.
\]
\end{nota}

In the following statement, the degree corresponds to the  grading inherited from that of $S^*$.

\begin{prop}
\label{prop:homology_Qi}
For $i\in \nat$, the homology $H(S^*, Q_i)$ is the
truncated symmetric power functor $ S^*/
\langle x^{2^i} \rangle$, concentrated in even degrees. Explicitly:
\[
 H (S^* , Q_i) ^k \cong 
\left\{
\begin{array}{ll}
 0 & k \equiv 1 \mod 2\\
S^d /\langle x^{2^i} \rangle  & k = 2d .
\end{array}
\right.
\]
\end{prop}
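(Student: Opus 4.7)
The plan is to reduce to a rank-one calculation using the K\"unneth formula, then identify the answer via the characteristic-two Frobenius.

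First I would observe that, for $V \in \obj \fdvs$ with a chosen basis $x_1, \ldots, x_n$, the algebra $S^*(V)$ is canonically $\field[x_1] \otimes \cdots \otimes \field[x_n]$; since $Q_i$ is a derivation (Lemma \ref{lem:Q_i_commute}), this is an isomorphism of differential graded algebras in which $Q_i$ acts on each tensor factor by $x_j \mapsto x_j^{2^{i+1}}$. The K\"unneth theorem over the field $\field$ then gives
\[
H(S^*(V), Q_i) \cong \bigotimes_{j=1}^n H(\field[x_j], Q_i).
\]

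Next I would compute the rank-one case directly. The Leibniz rule yields $Q_i(x^a) = a x^{a - 1 + 2^{i+1}}$, which in characteristic two vanishes for $a$ even and equals $x^{a + 2^{i+1} - 1}$ for $a$ odd. Hence the cycles form the even-degree subalgebra $\field[x^2]$, and the boundaries are spanned by those $x^b$ with $b$ even and $b \geq 2^{i+1}$. Therefore
\[
H(\field[x], Q_i) \cong \field[x^2]/\langle x^{2^{i+1}}\rangle,
\]
concentrated in even degrees, with the degree-$2m$ piece of dimension one for $0 \leq m < 2^i$ and zero otherwise.

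Combining the two steps, $H^k(S^*(V), Q_i)$ vanishes for $k$ odd, and in degree $2d$ is spanned by monomials $x_1^{2b_1} \cdots x_n^{2b_n}$ with $\sum_j b_j = d$ and each $b_j < 2^i$. To obtain a natural identification, I would invoke the characteristic-two squaring map $\phi : S^d \to S^{2d}$, $y \mapsto y^2$: this is $\field$-linear (since $(a+b)^2 = a^2 + b^2$ in characteristic two) and natural in $V$ (being the restriction to $S^d$ of the Frobenius of the algebra $S^*(V)$), and its image is precisely the subspace of monomials with all exponents even. Since $\phi$ sends $S^d$ into the cycles of $S^{2d}$ and kills exactly the relations generating $\langle x^{2^i}\rangle$ (an element $y \cdot x^{2^i}$ maps to $y^2 \cdot x^{2^{i+1}}$, which is a boundary), it descends to a natural transformation
\[
\overline{\phi} : S^d / \langle x^{2^i}\rangle \longrightarrow H^{2d}(S^*, Q_i)
\]
of functors in $\f$, and the rank-one K\"unneth computation shows that $\overline{\phi}$ is an isomorphism on every $V \in \obj \fdvs$.

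The only real subtlety is keeping the identification intrinsic: the tensor decomposition in the first step depends on a chosen basis, so one must check that $\overline{\phi}$ is the natural transformation that organizes the basis-dependent calculation. Once the squaring map is shown to land in the cycles and to annihilate the defining relations of $S^d/\langle x^{2^i}\rangle$, bijectivity is immediate from the K\"unneth answer and the core homology computation itself is entirely routine.
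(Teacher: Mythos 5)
Your proposal is correct and follows essentially the same route as the paper's proof: the rank-one computation of $H(\field[x],\, dx = x^{2^{i+1}})$, the K\"unneth theorem, and the Frobenius $S^d \to S^{2d}$ inducing the natural identification with $S^d/\langle x^{2^i}\rangle$ after checking that the defining relations are sent to boundaries. Your direct verification that $y\cdot x^{2^i} \mapsto y^2 x^{2^{i+1}} = Q_i(y^2 x)$ replaces the paper's commutative diagram but carries the same content.
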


\begin{proof}
 The result follows from the calculation of the homology of the differential
graded algebra $(\field [x], d x =
x^{2^{i+1}})$, which is the truncated polynomial algebra $\field [y] /
y^{2^{i}}$, where $y = x^2$. The homology of 
the tensor product of such algebras is  calculated by using the Künneth theorem.
It remains to show that this corresponds to the stated  functorial
isomorphism.

The above  establishes that the homology is concentrated in even degrees;
moreover, in degree $k= 2d$, the
Frobenius $S^d \hookrightarrow S^{2d}$ maps to the cycles in degree $k$ and induces a surjection onto the homology.

It is straightforward to check that, for integers $i, d \geq 1$, there is a commutative diagram  in $\f$:
\[
 \xymatrix{
S^{d - 2^{i}}\otimes S^1 
\ar[r]^{\Phi \otimes 1}
\ar[d]_{1 \otimes Q_{i-1}}
&
S^{2d - 2^{i+1}} \otimes S^1 
\ar[r]^\mu
&
S^{2d - 2^{i+1} +1 }
\ar[d]^{Q_i}
\\
S^{d - 2^{i}}\otimes S^{2^i}
\ar[r]_\mu
&
S^d
\ar[r]_\Phi
&
S^{2d},
}
\]
where $\Phi$ is the Frobenius. It follows that  the morphism $S^d \rightarrow H (S^* , Q_i)^{2d}$ factorizes across the
canonical projection $S^ d \twoheadrightarrow S^d/ \langle x^{2^i} \rangle$.
This completes the proof.
\end{proof}

\begin{rem}
For  $i=0$, the homology is $\field$ concentrated in degree
zero; in particular, the $Q_ 0 :S^n \rightarrow S^{n+1}$ induce an exact complex
\[
 0 \rightarrow S^1 \rightarrow S^2 \rightarrow S^3 \rightarrow \ldots \ .
\]
For $i=1$, there is non-trivial homology in even degrees; the homology of the
complex 
 $
 S^{2d -3} \rightarrow S^{2d} \rightarrow S^{2d +3} 
$ is  $\Lambda^d $. Moreover, the proof of the Proposition gives an exact  complex
\[
 S^d \oplus S^{2d-3} 
\stackrel{(\Phi , Q_1) } {\rightarrow} 
S^{2d}
\stackrel{Q_1}{\rightarrow}
S^{2d +3}.
\]
\end{rem}

\subsection{The $Q_0$-kernel complex}
\label{subsect:Q0_kernel}

\begin{nota}
\label{nota:Kn}
 For $n\in \nat$, let $K_n$ denote the kernel of $Q_0 : S^n \rightarrow
S^{n+1}$.
\end{nota}

By construction, there are short exact sequences $0 \rightarrow K_n \rightarrow
S^n \rightarrow K_{n+1} \rightarrow 0$,
for $n\geq 0$.
Initial values of $K_n$ are  $K_0 = \field = S^0$, $K_1 = 0$,
$K_2 = \Lambda^1= S^1 \hookrightarrow
S^2$, $K_3 = \Lambda  ^2$.

\begin{lem}
\label{lem:exact_complexes}
 The derivation $Q_1$ on $S^*$  induces a differential $Q_1 : K_n \rightarrow
K_{n+3}$ and there is a short exact
sequence  of complexes 
\[
 0 
\rightarrow 
(K_{i+3\bullet}, Q_1) 
\rightarrow 
(S^{i+3\bullet}, Q_1) 
\rightarrow 
(K_{i+1 + 3\bullet}, Q_1) 
\rightarrow 
0,
\]
where $0 \leq i <3$ and $\bullet \geq 0$. 
\end{lem}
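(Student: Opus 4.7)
The plan is to invoke Lemma~\ref{lem:Q_i_commute} together with the short exact sequences $0 \to K_n \to S^n \xrightarrow{Q_0} K_{n+1} \to 0$ already recorded in Section~\ref{subsect:Q0_kernel}. First I would check that $Q_1$ restricts to a morphism $K_n \to K_{n+3}$: for $x \in K_n$, the identity $Q_0 Q_1 = Q_1 Q_0$ gives $Q_0(Q_1 x) = Q_1(Q_0 x) = 0$, so $Q_1 x$ lies in $\ker(Q_0 : S^{n+3} \to S^{n+4}) = K_{n+3}$. The relation $Q_1^2 = 0$ on $S^*$ then descends to $K_\bullet$, so this restriction is a differential. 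Since $Q_1$ raises degree by exactly $2^{1+1} - 1 = 3$, the graded subobject $K_{i + 3\bullet}$ is closed under $Q_1$ for each $0 \le i < 3$, giving a subcomplex of $(S^{i+3\bullet}, Q_1)$.

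Next I would upgrade the graded short exact sequence $0 \to K_n \to S^n \xrightarrow{Q_0} K_{n+1} \to 0$ to a short exact sequence of chain complexes, which amounts to verifying that both maps commute with $Q_1$. The inclusion $K_n \hookrightarrow S^n$ commutes with $Q_1$ tautologically, since the differential on $K_n$ is defined as the restriction of the one on $S^n$. The projection $Q_0 : S^n \twoheadrightarrow K_{n+1}$ commutes with $Q_1$ precisely by the commutativity relation $Q_0 Q_1 = Q_1 Q_0$ supplied by Lemma~\ref{lem:Q_i_commute}. Naturality in $V \in \obj \fdvs$ is automatic, as every map involved is already a morphism in $\f$.

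Finally, I would fix $i \in \{0,1,2\}$ and let $n = i + 3\bullet$ run over $\bullet \ge 0$; the three resulting graded objects $K_{i+3\bullet}$, $S^{i+3\bullet}$, $K_{i+1+3\bullet}$ assemble into the stated short exact sequence of complexes, exactness being inherited levelwise from the original sequence of Section~\ref{subsect:Q0_kernel}. There is essentially no substantive obstacle here; the only point demanding care is the bookkeeping of degrees, namely that the degree shift $3$ of $Q_1$ forces a partition of degrees modulo $3$ in order to obtain genuine subcomplexes indexed by $\bullet \ge 0$.
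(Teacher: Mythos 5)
Your proof is correct and follows exactly the paper's (one-line) argument: the levelwise short exact sequences come from the exactness of the $Q_0$-complex in positive degrees, and compatibility with $Q_1$ comes from the commutation relation of Lemma \ref{lem:Q_i_commute}. You have simply written out the details that the paper leaves implicit.
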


\begin{proof}
 A consequence of the exactness of the $Q_0$-complex in positive dimensions and
the  commutation of  $Q_0, Q_1$ (Lemma \ref{lem:Q_i_commute}).
\end{proof}

\begin{nota}
For $n\in \nat$, let $L_n$ denote the image of $Q_1 : K_{n-3}
\rightarrow K_n$ and $\tilde{L}_n $ the kernel of $Q_1 : K_n
\rightarrow K_{n+3}$.
\end{nota}

The following is clear:

\begin{prop}
 \label{prop:Ltilde_mult}
The graded functors $K_*$,  $\tilde{L}_*$ have  unique  graded algebra structures such
that the canonical inclusions 
$\tilde{L}^* \hookrightarrow K_* \hookrightarrow S^*$ are  morphisms of  commutative graded
algebras in $\f$.
\end{prop}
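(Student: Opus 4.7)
The proposal is to exploit the fact that both $Q_0$ and $Q_1$ are derivations of $S^*$ (and that at $p=2$ signs are irrelevant), so their kernels are automatically subalgebras.

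First I would verify that $Q_0 : S^* \to S^{*+1}$ is a derivation of the commutative graded algebra $S^*$ in $\f$. This is already built into the definition: the formula $S^n \stackrel{\Delta}{\to} S^{n-1}\otimes S^1 \stackrel{1\otimes Q_i}{\to} S^{n-1}\otimes S^{2^{i+1}} \stackrel{\mu}{\to} S^{n+2^{i+1}-1}$ together with the Hopf-algebra compatibility between $\Delta$ and $\mu$ on $S^*$ (the primitively-generated polynomial Hopf algebra structure mentioned in Example following Pontrjagin) says precisely that $Q_i(ab) = Q_i(a)\, b + a\, Q_i(b)$ for all local sections $a,b$. Hence the subobject $K_* = \ker Q_0 \hookrightarrow S^*$ is closed under the multiplication $\mu : S^*\otimes S^* \to S^*$: given $a \in K_m$, $b \in K_n$, one has $Q_0(ab)=Q_0(a)b+aQ_0(b)=0$, so $ab \in K_{m+n}$. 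Thus $\mu$ restricts to a natural transformation $\mu_K : K_*\otimes K_* \to K_*$ which makes $K_*\hookrightarrow S^*$ a morphism of commutative graded algebras in $\f$; the commutativity, associativity and unitality are inherited automatically from $S^*$ because $K_* \hookrightarrow S^*$ is a monomorphism.

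Uniqueness is then immediate: any graded algebra structure on $K_*$ for which $K_* \hookrightarrow S^*$ is a morphism of graded algebras must make the diagram
\[
\xymatrix{
K_*\otimes K_* \ar[r]\ar[d] & K_* \ar[d] \\
S^*\otimes S^* \ar[r]_-{\mu} & S^*
}
\]
commute; since the right vertical arrow is a monomorphism, the top arrow is forced to be $\mu_K$.

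The same reasoning applies verbatim to $\tilde{L}_*$. By Lemma \ref{lem:Q_i_commute}, $Q_1$ is a derivation of $S^*$ commuting with $Q_0$, so it restricts to a derivation $Q_1 : K_* \to K_{*+3}$ of the graded algebra $K_*$ constructed above; its kernel $\tilde{L}_*$ is therefore a graded subalgebra of $K_*$, and the inclusion $\tilde{L}_* \hookrightarrow K_*$ is a morphism of commutative graded algebras in $\f$, unique with this property by the same monomorphism argument. There is no real obstacle in this proposition; the only thing one has to be slightly careful about is checking that $Q_0$ and $Q_1$ are genuine (Hopf) derivations with respect to the product $\mu$ on $S^*$, but this is forced by their definition via $\Delta$, $1\otimes Q_i$ and $\mu$ together with the cocommutative Hopf structure on $S^*$.
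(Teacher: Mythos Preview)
Your argument is correct and is exactly the reasoning the paper has in mind: the statement is recorded there as ``clear'' with no proof given, and the intended justification is precisely that the kernel of a derivation is a subalgebra, applied first to $Q_0$ on $S^*$ and then to $Q_1$ restricted to $K_*$ (using $Q_0Q_1=Q_1Q_0$ to see that $Q_1$ preserves $K_*$), with uniqueness forced by the inclusions being monomorphisms.
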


\begin{prop}
\label{prop:homology_K_Q1}
 The homology of the complexes $(K_{i+3\bullet} , Q_1)$ is determined by 
\[
\tilde{L}_n /L_n
 \cong  
\left\{
\begin{array}{ll}
\field & n=0
\\
0& n \equiv 1 \mod 2\\
 p_d\Ibar & n = 2d >0. 
\end{array}
\right.
\]
\end{prop}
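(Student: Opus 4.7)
The plan is to apply the long exact sequence in $Q_1$-homology arising from the short exact sequence of complexes of Lemma \ref{lem:exact_complexes}, feeding in the calculation of $H(S^*,Q_1)$ of Proposition \ref{prop:homology_Qi}. Writing $h_n := \tilde{L}_n/L_n$, the resulting LES in underlying degree $n$ reads
\[
\cdots \to h_n \to H^n(S^*,Q_1) \to h_{n+1} \to h_{n+3} \to H^{n+3}(S^*,Q_1) \to \cdots,
\]
in which $H^{2d}(S^*,Q_1) \cong S^d/\langle x^2\rangle \cong \Lambda^d$ (the second isomorphism holding at $p=2$) and $H^{2d+1}(S^*,Q_1)=0$.

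I would first dispose of the base cases by direct inspection: $K_0=\field$, $K_1=0$ and $K_2\cong\Lambda^1$ (the Frobenius image of $S^1$), with $Q_1$ acting trivially and $L_n$ vanishing for $n\le 2$, giving $h_0=\field$, $h_1=0$ and $h_2\cong\Lambda^1 = p_1\Ibar$. The LES segment $\cdots \to h_{2d-1}\to h_{2d+1}\to 0$ (the right-hand zero being $H^{2d+1}(S^*,Q_1)=0$) combined with an easy induction on $d$ starting from $h_1=0$ yields $h_{2d+1}=0$ for all $d\ge 0$. Substituting this odd vanishing back into the LES at an even underlying degree $n=2d$ collapses it to the short exact sequence
\[
0 \to h_{2d-2} \to h_{2d} \to \Lambda^d \to 0 \qquad (d\ge 1).
\]

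It remains to identify $h_{2d}$ as $p_d\Ibar$ by induction on $d$, assuming $h_{2d-2}\cong p_{d-1}\Ibar$. A short computation --- applying $\hom(\Lambda^d,-)$ to $0\to p_{d-1}\Ibar\to \Ibar\to \Ibar/p_{d-1}\Ibar\to 0$ and using injectivity of $\Ibar$ together with the socle structure of $\Ibar/p_{d-1}\Ibar$ from Lemma \ref{lem:p=2_inj_proj} --- shows $\ext^1(\Lambda^d, p_{d-1}\Ibar)\cong \field$, with the non-trivial class represented by $p_d\Ibar$; hence it suffices to prove that the above SES is non-split. This is the technical crux of the proof, equivalent to $\hom(\Lambda^d, h_{2d})=0$, or to the assertion that the socle of $h_{2d}$ coincides with the simple socle $\Lambda^1$ of $h_{2d-2}$. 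I would address this via the commutative graded algebra structure on $h_*$ given by Proposition \ref{prop:Ltilde_mult}: the Frobenius provides an algebra morphism $\bar\Phi : S^* \to h_{2*}$ whose composition with the quotient $h_{2d}\twoheadrightarrow\Lambda^d$ is (up to the isomorphism $\Lambda^d\cong S^d/\langle x^2\rangle$) the canonical head projection of $S^d$. A hypothetical splitting would produce a natural simple subfunctor $\Lambda^d\hookrightarrow h_{2d}$; the plan is to show that such a subfunctor, lifted to $\tilde L_{2d}\subset S^{2d}$ and compared with the image of $\bar\Phi$, is incompatible with the head-socle structure of $S^d$ in $\f$ (in particular, the fact that $\Lambda^d$ is the head but not a direct summand of $S^d$). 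Settling this compatibility is the main obstacle.
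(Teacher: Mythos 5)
Your set-up coincides with the paper's: the long exact sequence from Lemma \ref{lem:exact_complexes}, the vanishing of the odd-degree homology, the resulting short exact sequence $0 \to h_{2d-2} \to h_{2d} \to \Lambda^d \to 0$, and the observation that $\ext^1(\Lambda^d, p_{d-1}\Ibar) \cong \field$ with $p_d\Ibar$ realizing the non-trivial class are all correct. However, the decisive point --- that the extension is non-split --- is precisely what you leave unresolved (``the main obstacle''), and the route you sketch does not obviously close it: a hypothetical direct summand $\Lambda^d \subset h_{2d}$ lives only in a subquotient of $S^{2d}$ (not of $S^d$), so the head/socle structure of $S^d$ and the fact that $S^d \to h_{2d} \to \Lambda^d$ is the head projection do not by themselves produce a contradiction. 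As written, the proof is therefore incomplete at its crux.

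The paper finishes differently, sidestepping the splitting question: it shows directly that $p_d\Ibar$ is a subquotient of $H = h_{2d}$, which suffices because $[H] = [p_{d-1}\Ibar] + [\Lambda^d] = [p_d\Ibar]$ in the Grothendieck group. The unique non-zero morphism $S^d \to \Ibar$ has image $p_d\Ibar$; since $\Ibar$ is injective, it extends along the Frobenius $S^d \hookrightarrow \tilde{L}_{2d}$ to a morphism $\tilde{L}_{2d} \to \Ibar$. This extension vanishes on $L_{2d} = \mathrm{image}(Q_1 : K_{2d-3} \to \tilde{L}_{2d})$ because $\hom(K_{2d-3}, \Ibar) \cong K_{2d-3}(\field)^\sharp = 0$ (the integer $2d-3$ being odd), so it descends to a morphism $h_{2d} \to \Ibar$ whose image contains $p_d\Ibar$. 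The composition-factor count then forces $h_{2d} \cong p_d\Ibar$. You could substitute this argument for your non-splitness discussion verbatim; note that it also yields the non-splitness a posteriori, since the resulting monomorphism $h_{2d} \hookrightarrow \Ibar$ together with $\hom(\Lambda^d, \Ibar) = 0$ for $d \geq 2$ rules out any copy of $\Lambda^d$ inside $h_{2d}$.
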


\begin{proof}
The proof is by induction upon $n$; the case $n=0$ is clear. It is straightforward to show that the odd degree homology is trivial (independently of the calculation of the even degree homology).

For the inductive step, consider the commutative diagram
arising from the short exact sequence of complexes given by Lemma \ref{lem:exact_complexes}:
\[
\xymatrix{
& 
S^{n-6}
\ar[r]
\ar[d]
& 
K_{n-5} 
\ar[d]
\\
K_{n-3} \ar[r]
\ar[d] 
&
S^{n-3} 
\ar[r]
\ar[d]
&
K_{n-2}
\ar[d]
\\
K_n \ar[r]
\ar[d]
&
S^n 
\ar[r]
\ar[d]
&
K_{n+1}
\ar[d]
\\
K_{n+3} 
\ar[r]
&
S^{n+3}
\ar[r]
&
K_{n+4};
}
\]
the homology $H$ at the middle of  the left hand column is calculated  in terms of the known
homologies in the
other two columns, via the long exact sequence in homology.

For $n=2d >0$, there is a short exact sequence 
\[
 0 
\rightarrow 
p_{d-1}\Ibar 
\rightarrow H
\rightarrow 
\Lambda^d 
\rightarrow 
0,
\]
 by the inductive hypothesis for the left hand term and Proposition
\ref{prop:homology_Qi} for the exterior power, using the vanishing of homology
in odd degrees.  It suffices to show that $ p_d \Ibar$ is a subquotient of  $H$.

The Frobenius $\Phi : S^d \hookrightarrow S^{2d}$ maps to $K_{2d}$ and the image
lies in the kernel $\tilde{L}_{2d}$ of $K_{2d}
\rightarrow K_{2d+3}$. There is a unique non-trivial morphism $S^d
\rightarrow \Ibar$ and this  has
image $p_d\Ibar$; since $\Ibar$ is injective, this extends to
give a commutative diagram 
\[
 \xymatrix{
&
K_{2d - 3} 
\ar[d]
\\
S^d 
\ar@{^(->}[r]
\ar@{->>}[d]
&
\tilde{L}_{2d}
\ar[d]
\\
p_d \Ibar
\ar@{^(->}[r]
&
\Ibar 
.
}
\]
The composite morphism $K_{2d - 3} \rightarrow \Ibar$ is trivial, since
$K_{2d-3} (\field) =0$; it follows that
$p_d\Ibar $ is a subquotient of the homology $H$, as required. 
\end{proof}

\begin{exam}
\label{exam:L_low_degree}
 It is straightforward to verify that $L_n=0$ for $n \leq 5$, hence the initial values of $\tilde{L}_i$ are  given by 
\[
 \tilde{L}_n = \left\{
\begin{array}{ll}
 0 & n \in \{ 1, 3, 5 \}\\
\Lambda^1 & n=2 \\
S^2 & n =4.
\end{array}
\right.
\]

\end{exam}

\section{Connective complex K-cohomology and homology}
\label{sect:ku}

A complete functorial description of both $V \mapsto ku^* (BV_+)$ and $V \mapsto ku_* (BV_+)$ is given, using the results of Section \ref{sect:groupring} and Section \ref{sect:milnor}.

\subsection{Recollections}

The Postnikov towers of $ku$ and $KU$ provide morphisms  
$
ku \rightarrow
H\zed 
$ 
 and $ku \rightarrow KU$  of commutative  ring spectra relating connective (resp. periodic) complex $K$-theory $ku$ (resp. $KU$) and the integral Eilenberg-MacLane spectrum $H\zed$.

There are  cofibre sequences 
$
\Sigma^2 ku 
\stackrel{v}{\rightarrow}
ku 
\rightarrow 
H \zed
$, $
H \zed 
\stackrel{2}{\rightarrow}
H \zed 
\stackrel{\rho}{\rightarrow}
H \field,
$
where $v$ is multiplication by  the Bott element ($ku_* \cong \zed [v]$, with $|v|=2$).
These give rise to (generalized) Bocksteins; in
particular:

\begin{nota}
 Let $\qfrak$ denote the first $k$-invariant of $ku$, given by the composite $H \zed  \rightarrow \Sigma^3
ku\rightarrow \Sigma^3 H \zed$.
\end{nota}

Recall that the Milnor derivation $Q_1 \in H\field^3 H \field$ is the commutator $[Sq^2 , Sq^1]$ (the Milnor derivation $Q_0$ is the Bockstein  $\beta = Sq^1$).

\begin{lem}
 \label{lem:Q1_qfrak}
There is a commutative diagram:
\[
 \xymatrix{
H\zed \ar[r]^{\qfrak} 
\ar[d]_\rho
&
\Sigma^3 H\zed 
\ar[d]^\rho
\\
H \field \ar[r] _{Q_1}
&
\Sigma ^3 H\field. 
}
\]
\end{lem}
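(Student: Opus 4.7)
Both sides of the claimed equation lie in $[H\zed, \Sigma^3 H\field] = H^3(H\zed;\field)$, so the plan is to compute each separately and verify equality. By Serre's theorem, $H^*(H\zed;\field) \cong \cala/\cala Sq^1$ as a left $\cala$-module; denoting the unit class (represented by $\rho$ itself) by $\iota \in H^0(H\zed;\field)$, this gives $H^1=0$, $H^2 = \field\langle Sq^2\iota \rangle$ and $H^3 = \field\langle Sq^1 Sq^2\iota\rangle$, using $Sq^3 = Sq^1 Sq^2$ (Adem) together with $Sq^2 Sq^1 \iota = 0$. Directly,
\[
Q_1 \circ \rho \;=\; Q_1(\iota) \;=\; (Sq^1 Sq^2 + Sq^2 Sq^1)(\iota) \;=\; Sq^1 Sq^2\iota,
\]
the unique nonzero element of $H^3(H\zed;\field)$.

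For $\rho \circ \qfrak$, I would rotate the defining cofibration to
\[
\Sigma^2 ku \xrightarrow{v} ku \xrightarrow{\pi} H\zed \xrightarrow{\tilde\qfrak} \Sigma^3 ku,
\]
so that $\qfrak = (\Sigma^3\pi)\circ \tilde\qfrak$. Writing $\iota_{ku} := \rho\circ \pi \in H^0(ku;\field)$ for the mod-$2$ unit of $ku$, unwinding definitions identifies $\rho\circ \qfrak$ with $\tilde\qfrak^*(\iota_{ku})$, where $\tilde\qfrak^*$ is the connecting map in the long exact cohomology sequence
\[
H^0(ku;\field) \xrightarrow{\tilde\qfrak^*} H^3(H\zed;\field) \xrightarrow{\pi^*} H^3(ku;\field).
\]
By exactness, it then suffices to show that $\pi^*$ vanishes in degree $3$, forcing $\tilde\qfrak^*$ to hit the one-dimensional generator and yielding $\rho\circ\qfrak = Sq^1 Sq^2\iota = Q_1\circ\rho$.

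The vanishing $\pi^*(Sq^1 Sq^2\iota) = Sq^1 Sq^2\iota_{ku} = 0$ in $H^3(ku;\field)$ is the main content and the only input specific to $ku$; I expect this to be the principal obstacle. The argument is that $Sq^2\iota_{ku}$ generates $H^2(ku;\field) = \field$ (from the low-degree segment of the same exact sequence, using $H^{-1}(ku;\field)=0$) and admits an integral lift: the Hurewicz isomorphism identifies $H^2(ku;\zed)$ with $\zed$, generated by the integral cohomology class dual to the Bott element $v \in \pi_2(ku)$, whose mod-$2$ reduction must coincide with $Sq^2 \iota_{ku}$. Since $Sq^1 = \rho\circ\beta$ annihilates classes admitting an integral lift (by exactness of the Bockstein sequence), we conclude $Sq^1 Sq^2\iota_{ku} = 0$, completing the identification.
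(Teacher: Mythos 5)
Your reduction is a genuinely different route from the paper's. The paper simply quotes Adams for the identification of the first $k$-invariant, $\qfrak = \tilde{\beta}\circ Sq^2 \circ \rho$ with $\tilde{\beta}$ the integral Bockstein, whence $\rho\circ\qfrak = Sq^1 Sq^2\circ\rho$, and compares this with $Q_1\circ\rho = (Sq^1Sq^2+Sq^2Sq^1)\circ\rho = Sq^1Sq^2\circ\rho$ using $Sq^1\circ\rho = 0$. You instead work inside the one-dimensional group $H^3(H\zed;\field)\cong(\cala/\cala Sq^1)^3$ and use the long exact sequence of the defining cofibration to reduce everything to the single vanishing $Sq^1Sq^2\iota_{ku}=0$ in $H^3(ku;\field)$. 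That reduction is correct (including the point that $H^0(ku;\field)=\field\{\iota_{ku}\}$, so surjectivity of $\tilde\qfrak^*$ forces $\tilde\qfrak^*(\iota_{ku})\neq 0$), and it buys independence from the explicit identification of the $k$-invariant.

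The gap is in your justification of that vanishing. First, $ku$ is connective but not $1$-connected, so the Hurewicz theorem only gives $\pi_0(ku)\cong H_0(ku;\zed)$; it does not compute $H_2$. More seriously, there is no cohomology class ``dual to the Bott element,'' integrally or mod $2$: since $\pi\circ v=0$ in the cofibration, the class $Sq^2\iota_{ku}=\pi^*(Sq^2\iota)$ pairs to zero with the Hurewicz image $h(v)$, so if a class dual to $v$ existed and reduced to $Sq^2\iota_{ku}$ you would have a contradiction with your own (correct) claim that $Sq^2\iota_{ku}$ generates the one-dimensional $H^2(ku;\field)$. In fact the same long exact sequence, together with $H^2(H\zed;\zed)=0$ and $H^3(H\zed;\zed)\cong\zed/2$, gives $H^2(ku;\zed)\cong\ker\big(\zed\to\zed/2\big)\cong\zed$ with $h(v)$ equal to \emph{twice} a generator; its mod $2$ reduction is zero. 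The statement you actually need --- and which completes the argument --- is that a \emph{generator} $g$ of $H^2(ku;\zed)\cong\zed$ satisfies $\rho(g)\neq 0$ (a generator of $\zed$ is not divisible by $2$), hence $\rho(g)=Sq^2\iota_{ku}$ and $Sq^1Sq^2\iota_{ku}=Sq^1\rho(g)=0$. So replace the Hurewicz/duality claim by this exact-sequence computation of $H^2(ku;\zed)$, or simply quote the standard identification $H^*(ku;\field)\cong\cala/\cala(Sq^1,Sq^3)$. Relatedly, your assertion that $H^2(ku;\field)$ is one-dimensional needs not only $H^{-1}(ku;\field)=0$ (which gives injectivity of $\pi^*$) but also the vanishing of $v^*\colon H^2(ku;\field)\to H^0(ku;\field)$, which again follows from the computation just indicated rather than from Hurewicz.
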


\begin{proof}
 The morphism $\qfrak$ is  the image under the integral Bockstein $H\field^2 H\zed \rightarrow H\zed^3 H\zed$ of the class of $Sq^2$ (recall that $H\field^* H \zed \cong \cala/ (Sq^1) $)  \cite[proof of III.16.6]{adams}. Hence there is a commutative diagram 
\[
 \xymatrix{
H \zed 
\ar@/^1pc/[rr]^{\qfrak}
\ar[d]_\rho
\ar[r]
&
\Sigma^3 ku 
\ar[r]
&
\Sigma^3 H \zed 
\ar[d]^\rho
\\
H\field 
\ar[r]^{Sq^2} 
\ar@/_1pc/[rr]_{Sq^3}
&
\Sigma^2 H\field
\ar[r]^{Sq^1}
\ar[ur]
&
\Sigma^3 H \field.
}
\]
Since the composite $Sq^1 \circ \rho$ is trivial, the result follows.
\end{proof}

\begin{nota}
 The morphisms in (co)homology induced by $\qfrak$ (respectively $Q_1$) will be denoted simply $\qfrak$ (resp. $Q_1$).
\end{nota}

\begin{lem}
\label{lem:Bockstein_exact_Q1_qfrak}
 For $Y$ a spectrum, the following conditions are equivalent:
\begin{enumerate}
 \item 
$H\zed^* Y \stackrel{\rho} {\rightarrow} H \field^* Y $ is a monomorphism;
\item
the Bockstein complex $(H\field^* Y , \beta) $ is exact;
\item
 $H\zed_* Y \stackrel{\rho} {\rightarrow} H \field_* Y $ is a monomorphism;
\item
the Bockstein complex $(H\field_* Y , \beta) $ is exact.
\end{enumerate}
When these conditions are satisfied, the respective morphisms $\qfrak$  are determined by the commutative diagrams:
\[
 \xymatrix{
H\zed^* Y 
\ar[r]^{\qfrak} 
\ar@{^(->}[d]_\rho
&
H\zed^{*+3} Y 
\ar@{^(->}[d]^{\rho}
&
H\zed_* Y 
\ar[r]^{\qfrak} 
\ar@{^(->}[d]_\rho
&
H\zed_{*-3} Y 
\ar@{^(->}[d]^{\rho}
\\
H\field^* Y 
\ar[r]_{Q_1} 
&
H\field^{*+3} Y 
&
H\field_* Y
\ar[r]_{Q_1} 
&
H\field_{*-3} Y.
}
\]
\end{lem}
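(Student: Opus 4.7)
The plan is to apply the functors $F(Y,-)$ and $Y \wedge -$ to the cofibre sequence $H\zed \stackrel{2}{\to} H\zed \stackrel{\rho}{\to} H\field$ to produce long exact sequences. In cohomology this gives
\[
\cdots \to H\field^{n-1}Y \stackrel{\tilde\beta}{\to} H\zed^n Y \stackrel{2}{\to} H\zed^n Y \stackrel{\rho}{\to} H\field^n Y \stackrel{\tilde\beta}{\to} H\zed^{n+1}Y \to \cdots,
\]
with the mod-$2$ Bockstein identified as $\beta = \rho \tilde\beta$; an analogous long exact sequence holds in homology.

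To establish (1) $\Leftrightarrow$ (2), I would first read off from the cohomology LES that $\ker \rho^n = 2 H\zed^n Y$, so (1) is equivalent to $H\zed^*Y$ being annihilated by $2$. Under this hypothesis the LES decouples into short exact sequences $0 \to H\zed^n Y \to H\field^n Y \to H\zed^{n+1}Y \to 0$, and a standard diagram chase yields $\ker \beta^n = \mathrm{im}\, \rho^n = \mathrm{im}\,\beta^{n-1}$, giving (2). For the converse, given $x = 2y \in \ker \rho$, exactness of the Bockstein complex applied to $\rho(y)$ produces an expression $\rho(y) = \rho\tilde\beta(\alpha)$; since $2\tilde\beta(\alpha) = 0$ this gives $x = 4 y_1$, and iteration forces $x$ into the infinitely $2$-divisible subgroup of $H\zed^n Y$, which vanishes in the intended applications. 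The symmetric argument in homology yields (3) $\Leftrightarrow$ (4).

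To bridge the two pairs, I would use duality of the mod-$2$ Bockstein complexes: when $H\field_* Y$ has finite type in each degree, the universal coefficient theorem identifies $(H\field^*Y,\beta)$ with the $\field$-linear dual of $(H\field_*Y,\beta)$ and intertwines the Bocksteins, so (2) $\Leftrightarrow$ (4), completing the four-way equivalence.

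For the final assertion, applying $[Y,-]_*$ (respectively $\pi_*(Y \wedge -)$) to the spectrum-level diagram of Lemma \ref{lem:Q1_qfrak} and invoking naturality produces the displayed commutative squares. Under any of the four equivalent hypotheses $\rho$ is a monomorphism, so $\qfrak$ is uniquely characterised as the lift of $Q_1 \circ \rho$ through $\rho$. The principal subtlety is the implication (2) $\Rightarrow$ (1): the naive chase yields only infinite $2$-divisibility, so one must invoke a boundedness or finite-type hypothesis on $H\zed^* Y$, implicit in the intended applications, to conclude $2 H\zed^* Y = 0$.
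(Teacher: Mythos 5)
Your argument is the standard Bockstein long-exact-sequence chase, which is exactly what the paper appeals to (its proof of the equivalence is literally the word ``standard''), and your treatment of the final assertion --- the commutative squares come from Lemma \ref{lem:Q1_qfrak} by naturality, and the monomorphism $\rho$ then determines $\qfrak$ --- coincides with the paper's. The identification $\ker\rho = 2H\zed^*Y$, the decoupling of the long exact sequence when $2H\zed^*Y=0$, and the dualization bridge between the cohomological and homological Bockstein complexes are all correct. (For the bridge, finite type is not even needed: with field coefficients $H\field^*Y\cong\hom_\field(H\field_*Y,\field)$ for any $Y$, and a complex of vector spaces is exact if and only if its linear dual is.)

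The caveat you flag for $(2)\Rightarrow(1)$ is genuine, and it is a defect of the statement rather than of your proof: for an arbitrary spectrum the equivalence fails. If $Y$ is the mod $3$ Moore spectrum then $H\field_*Y=0$, so the Bockstein complex is vacuously exact, while $H\zed_0 Y\cong\zed/3$ is not detected by $\rho$; a Moore spectrum for $\zed[1/2]$ gives a cohomological counterexample. Your chase locates the obstruction precisely: it shows $\ker\rho\subset\bigcap_k 2^kH\zed^nY$, and both counterexamples live in this infinitely $2$-divisible subgroup. So the lemma requires the implicit hypothesis that $H\zed^*Y$ (respectively $H\zed_*Y$) contains no nonzero infinitely $2$-divisible elements --- automatic for $Y=\Sigma^\infty BV$ as in Example \ref{exam:BV_2torsion}, where the reduced integral (co)homology is of finite type and all $2$-torsion. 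With that hypothesis made explicit, your proof is complete.
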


\begin{proof}
 The equivalence of the conditions is standard and the commutative diagrams follow from Lemma \ref{lem:Q1_qfrak}; these determine $\qfrak$, since the vertical morphisms are injective, by hypothesis. 
\end{proof}

\begin{exam}
 \label{exam:BV_2torsion}
The hypotheses of Lemma \ref{lem:Bockstein_exact_Q1_qfrak} are satisfied for $Y = \Sigma^\infty BV$, where $V$ is  an elementary abelian $2$-group. In particular, the action of $\qfrak$ on $H \zed^* (BV)$ can be understood in terms of the action of $\Lambda (Q_0, Q_1)$ on $H\field^* (BV)$. 
\end{exam}

\subsection{On $v$-torsion and cotorsion}

There is a hereditary torsion theory $(\mathcal{T}, \mathcal{F})$ on the category of $\zed [v]$-modules where $\mathcal{T}$ is the category of $v$-torsion modules (every element is annihilated by some power of $v$) and $\mathcal{F}$ the category of $v$-cotorsion modules (those $\zed [v]$-modules $M$ which embed in $M[\frac{1}{v}]$). This torsion theory has associated torsion functor $\torsion$ and cotorsion functor $\cotorsion$ so that, for a $\zed [v]$-module, there is a natural short exact sequence of $\zed [v]$-modules:
\[
 0 
\rightarrow 
\torsion M 
\rightarrow 
M 
\rightarrow 
\cotorsion M
\rightarrow 
0. 
\]
The submodule of $M$ annihilated by $v$ is written $\annih M$.

\begin{lem}
\label{lem:ses_torsion_cotorsion}
For a $\zed [v]$-module $M$, there is an isomorphism $\torsion M \cap v M \cong v \torsion M$, hence $\torsion M \hookrightarrow M$ induces a monomorphism $(\torsion M)/v \hookrightarrow M/v$. 

Moreover, there is a short exact sequence of $\zed [v]$-modules:
\[
 0 
\rightarrow 
\cotorsion M 
\stackrel{v} {\rightarrow} 
\cotorsion M
\rightarrow 
(M/v)/ \big((\torsion M)/v\big)
\rightarrow 0.
\]
\end{lem}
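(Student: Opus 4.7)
The plan is to handle the two assertions by elementary diagram chasing, relying on the fact that $\cotorsion M$ is characterized by the injectivity of multiplication by $v$.

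First I would verify the identity $\torsion M \cap vM = v \torsion M$. The inclusion $v \torsion M \subset \torsion M \cap vM$ is immediate since $\torsion M$ is a submodule. Conversely, if $x \in \torsion M \cap vM$, write $x = vy$ with $y \in M$; since $v^n x = 0$ for some $n$, we get $v^{n+1} y = 0$, so $y \in \torsion M$ and $x \in v \torsion M$. The monomorphism $(\torsion M)/v \hookrightarrow M/v$ is then a direct consequence: the kernel of the composite $\torsion M \hookrightarrow M \twoheadrightarrow M/v$ is exactly $\torsion M \cap vM = v \torsion M$.

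For the second assertion, I would apply the snake lemma to the ladder obtained by multiplying by $v$ on the defining short exact sequence $0 \rightarrow \torsion M \rightarrow M \rightarrow \cotorsion M \rightarrow 0$. This produces the six-term exact sequence
\[
0 \rightarrow \annih \torsion M \rightarrow \annih M \rightarrow \annih \cotorsion M \rightarrow (\torsion M)/v \rightarrow M/v \rightarrow (\cotorsion M)/v \rightarrow 0.
\]
Because $\cotorsion M$ embeds in $\cotorsion M [\tfrac{1}{v}]$, multiplication by $v$ is injective on $\cotorsion M$, so $\annih \cotorsion M = 0$. The resulting four-term exact sequence splits into the already-established monomorphism $(\torsion M)/v \hookrightarrow M/v$ and an isomorphism $(\cotorsion M)/v \cong (M/v)/\big((\torsion M)/v\big)$. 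Combining this identification with the injectivity of $v$ on $\cotorsion M$ yields the stated short exact sequence.

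No step presents any real obstacle; the only point to keep in mind is that the defining property of $v$-cotorsion (namely embedding in the localization) is equivalent to $v$ acting injectively, which simultaneously supplies the injectivity on the left of the displayed sequence and forces the vanishing $\annih \cotorsion M = 0$ needed to feed the snake-lemma output into the desired form.
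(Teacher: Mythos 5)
Your argument is correct and complete: the identification $\torsion M \cap vM = v\torsion M$, the induced monomorphism on $v$-cokernels, and the snake-lemma identification of $(\cotorsion M)/v$ with $(M/v)/\bigl((\torsion M)/v\bigr)$ using that $v$ acts injectively on the torsion-free quotient are exactly the intended (and essentially only) route; the paper simply records the proof as straightforward.
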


\begin{proof}
 Straightforward.
\end{proof}

For  a $\zed [v]$-module $M$, there is a composite 
$$\annih M \hookrightarrow \torsion M \twoheadrightarrow (\torsion M)/v.$$

The following algebraic result is required in the proof of Proposition \ref{prop:qfrak_formal}.

\begin{lem}
 \label{lem:ann_torsion}
For $M$  a graded $\zed[v]$-module, the following conditions are equivalent:
\begin{enumerate}
 \item 
$\annih M = \torsion M$;
\item
$\annih M \rightarrow (\torsion M)/v$ is injective.
\end{enumerate}
If, for each degree $d$, there exists $N(d) \in \nat$ such that 
\[
 (v^{N(d)} M \cap \torsion M) ^d =0
\]
  then these conditions are equivalent to 
\begin{enumerate}
 \item[(3)]
$\annih M \rightarrow (\torsion M)/v$ is surjective.
\end{enumerate}
\end{lem}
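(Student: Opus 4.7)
The plan is to establish (1) $\Leftrightarrow$ (2) unconditionally and then add (3) $\Leftrightarrow$ (1) using the finiteness hypothesis. A guiding observation is that the composite $\annih M \hookrightarrow \torsion M \twoheadrightarrow (\torsion M)/v$ has kernel $\annih M \cap v \torsion M$, so (2) is equivalent to the vanishing of this intersection.

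For (1) $\Rightarrow$ (2), and likewise (1) $\Rightarrow$ (3), note that $\annih M = \torsion M$ forces $v \torsion M = 0$, so $(\torsion M)/v = \torsion M$ and the map under consideration is the identity. For (2) $\Rightarrow$ (1), I would argue by contradiction: given $m \in \torsion M \setminus \annih M$, let $k \geq 2$ be minimal with $v^k m = 0$; then $v^{k-1} m$ is nonzero, belongs to $\annih M$ (since $v \cdot v^{k-1} m = 0$), and belongs to $v \torsion M$ (as $v(v^{k-2} m)$ with $v^{k-2} m$ torsion), contradicting the vanishing of $\annih M \cap v \torsion M$.

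The content is in (3) $\Rightarrow$ (1) under the finiteness hypothesis. For homogeneous $m \in \torsion M$ of degree $d$, surjectivity of the graded map $\annih M \to (\torsion M)/v$ in degree $d$ produces $a_0 \in \annih M$ and $m_1 \in \torsion M$ with $m = a_0 + v m_1$. Iterating on $m_1, m_2, \ldots$ yields sequences $(a_i) \subset \annih M$ and $(m_i) \subset \torsion M$ satisfying $m_i = a_i + v m_{i+1}$. Because $v a_i = 0$, the partial sums telescope to $m = a_0 + v^k m_k$ for every $k \geq 1$, so $m - a_0$ lies in $(v^k M \cap \torsion M)^d$ for every $k$. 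Taking $k = N(d)$ forces $m = a_0 \in \annih M$, establishing (1).

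The main obstacle --- modest as it is --- is the bookkeeping in the iterative argument for (3) $\Rightarrow$ (1): one must check both that the lifts can be chosen homogeneously (which follows from the graded structure of the surjection in (3)) and that all cross terms $v^i a_i$ for $i \geq 1$ really vanish, so that the remainder at each stage stays of the form $v^k m_k$ with $m_k$ torsion.
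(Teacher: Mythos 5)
Your proposal is correct and follows essentially the same route as the paper: the implication (1) $\Rightarrow$ (2),(3) via $v\,\torsion M=0$, the implication (2) $\Rightarrow$ (1) by extracting the last nonzero power $v^{k-1}m$ (the paper takes $t$ maximal with $v^t x\neq 0$ rather than $k$ minimal with $v^k m=0$, which is the same step), and the iteration $m=a_0+v^k m_k$ killed in degree $d$ by the hypothesis $(v^{N(d)}M\cap\torsion M)^d=0$ for (3) $\Rightarrow$ (1). The bookkeeping you flag (homogeneity of the lifts and vanishing of the cross terms $v^i a_i$) works out exactly as you indicate, so there is no gap.
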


\begin{proof}
 If $\annih M = \torsion M$, then $\torsion M \cong (\torsion M)/v$ and hence condition (1) implies both (2) and (3).

Suppose condition (2) holds and consider $x \in \torsion M$, so that there exists $t \in \nat$ such that $v^t x \neq 0$ and $v^{t+1} x=0$. Hence $v^t x \in \annih M$; the hypothesis implies that $v^t x$ is not in $v M$, hence $t=0$ and $x \in \annih M$, as required.

Now suppose that condition (3) holds, under the additional hypothesis. Consider $x \in \torsion M$; by surjectivity of $\annih M \twoheadrightarrow (\torsion M)/v$, $x = a + vy$, for $a \in \annih M$ and $y \in \torsion M$. An induction shows that, for  $0< n \in \nat$,  $x = a + v^n y_n$, for some $y_n \in \torsion M$; it suffices to show that $v^n y_n = 0$ for $n \gg 0$. By construction $v^n y_n \in (v^n M \cap \torsion M)^{|x|}$, hence the hypothesis implies that the element is zero for $n \geq N(|x|)$.
\end{proof}

\begin{rem}
 The example $\zed [v]/v^\infty$ shows that the conditions (1) and (3) are  not equivalent without the additional hypothesis.
\end{rem}

The following Lemma applies when considering $ku$-cohomology of a spectrum and allows the cotorsion to be related to periodic $K$-theory.

\begin{lem}
\label{lem:alg_localization_periodic}
\cite[Chapter 1]{bruner_greenlees}
 If $Z$ is a connective spectrum, there is a natural isomorphism $KU^* (Z) \cong ku^* (Z) [\frac{1}{v}]$.
\end{lem}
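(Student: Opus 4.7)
The plan is to realise $KU$ as the mapping telescope of Bott multiplication,
\[
KU \simeq \mathrm{hocolim}\bigl( ku \xrightarrow{v} \Sigma^{-2} ku \xrightarrow{v} \Sigma^{-4} ku \to \cdots \bigr),
\]
and then to commute this sequential homotopy colimit past the cohomology functor $[Z, -]^*$. The telescope model is standard, recovering $\pi_* KU = \zed[v^{\pm 1}]$ from $ku_* = \zed[v]$ by inverting $v$, and the canonical map $ku \to \mathrm{hocolim}$ is the unit of the Bousfield localisation inverting $v$. Granted this, applying $[Z, \Sigma^n(-)]$ produces a natural comparison
\[
\alpha_n : \mathrm{colim}_k \; ku^{n-2k}(Z) \longrightarrow KU^n(Z),
\]
whose source is precisely the degree-$n$ part of $ku^*(Z)[1/v]$; the lemma amounts to showing that each $\alpha_n$ is an isomorphism.

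For $Z$ a finite spectrum, $\alpha_n$ is an isomorphism by compactness, since $[Z, -]$ then commutes with sequential homotopy colimits. To pass to general connective $Z$, I would write $Z \simeq \mathrm{hocolim}_r Z^{(r)}$ with each $Z^{(r)}$ finite, and compare the Milnor short exact sequences
\[
0 \to {\lim_r}^1 E^{*-1}(Z^{(r)}) \to E^*(Z) \to \lim_r E^*(Z^{(r)}) \to 0
\]
for $E = ku$ and $E = KU$. The finite case matches the $\lim$-terms after localisation, so the argument reduces to commuting $\lim_r$ past the filtered colimit $\mathrm{colim}_k$ inverting $v$, and to matching the associated $\lim^1$-terms.

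The main obstacle is exactly this interchange of inverse limits with localisation, which fails in general. Here the hypothesis that $Z$ is connective is essential: the cofibre of $v^k : \Sigma^{2k} ku \to ku$ is the Postnikov truncation $\tau_{<2k} ku$, built from a bounded range of shifts of $H\zed$. For connective $Z^{(r)}$ this forces the towers $\{ku^{n-2k}(Z^{(r)})\}_k$ in each fixed degree $n$ to stabilise modulo a controlled contribution from ordinary $\zed$-cohomology. This yields a Mittag--Leffler condition which both justifies the interchange of $\lim_r$ with $v$-localisation and kills the relevant $\lim^1$ obstruction, completing the argument.
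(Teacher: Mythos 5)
The paper gives no proof of this lemma at all: it is quoted verbatim from Bruner--Greenlees \cite[Chapter 1]{bruner_greenlees}, so there is nothing internal to compare against. Your argument is essentially correct, and the key point --- that the cofibre of $v^{k}$ is the coconnective truncation of $ku$, so that for connective input the tower $\{ku^{n-2k}(Z)\}_{k}$ becomes a tower of isomorphisms once $n-2k<0$ --- is exactly the right mechanism. However, the detour through finite skeleta and Milnor sequences is unnecessary and is the weakest part of the write-up. The same coconnectivity observation proves the lemma directly for an arbitrary connective $Z$: the cofibre $C_{k}$ of $\Sigma^{-2k}ku\rightarrow KU$ has homotopy concentrated in degrees $\leq -2k-2$, so $[Z,\Sigma^{m}C_{k}]=0$ for all $m\leq 2k+1$ when $Z$ is connective, whence $ku^{n-2k}(Z)\rightarrow KU^{n}(Z)$ is already an isomorphism for $2k\geq n$ and $\alpha_{n}$ is an isomorphism with no compactness or $\lim^{1}$ bookkeeping. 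If you do insist on the skeletal route, two points need care: (i) the finite subspectra $Z^{(r)}$ must be chosen connective (possible via a CW model with cells in non-negative dimensions), since otherwise the degree at which the $v$-tower stabilises drifts with $r$ and the interchange of $\lim_{r}$ with the localisation fails; and (ii) the phrase ``kills the relevant $\lim^{1}$ obstruction'' is misleading --- the $\lim^{1}$ terms for $ku$ and $KU$ do not vanish in general, they are \emph{identified} with one another (because localisation at $v$ is, degreewise, evaluation at a fixed finite stage of the tower and hence commutes with both $\lim_{r}$ and $\lim^{1}_{r}$), after which the five lemma finishes the comparison of Milnor sequences.
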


For the applications, $Z$ will be the suspension spectrum of a space, hence the connectivity hypothesis is not restrictive. 

\begin{lem}
\label{lem:ses_tors_cotors}
 For $X$  a spectrum:
\begin{enumerate}
 \item 
there is a natural exact sequence of $\zed[v]$-modules:
\[
 0 
\rightarrow 
\torsion ku_*(X)
\rightarrow 
ku_* (X)
\rightarrow 
\cotorsion ku_* (X)
\rightarrow 
0
\]
and a natural isomorphism $$\cotorsion ku_* (X) \cong \mathrm{image} \{ ku_* (X) \rightarrow KU_* (X)\};$$ 
\item
there is a natural exact sequence of $\zed[v]$-modules:
\[
 0 
\rightarrow 
\torsion ku^*(X)
\rightarrow 
ku^* (X)
\rightarrow 
\cotorsion ku^* (X)
\rightarrow 
0
\]
and, if $X$ is connective, a natural isomorphism $$\cotorsion ku^* (X) \cong \mathrm{image} \{ ku^* (X) \rightarrow KU^* (X) \}.$$
\end{enumerate}
\end{lem}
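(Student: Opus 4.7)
\emph{Plan.} The plan is to deduce both parts from the formal torsion--cotorsion decomposition of $\zed[v]$-modules combined with the identification of the algebraic $v$-localization with periodic $K$-theory (Lemma \ref{lem:alg_localization_periodic} and its homological counterpart).

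First, for any $\zed[v]$-module $M$, the hereditary torsion theory $(\mathcal{T}, \mathcal{F})$ furnishes the canonical short exact sequence
\[
 0 \to \torsion M \to M \to M/\torsion M \to 0,
\]
in which the quotient $M/\torsion M$ embeds into $M[\tfrac{1}{v}]$ and is therefore equal to $\cotorsion M$. Specialising to $M = ku_*(X)$ (resp.\ $M = ku^*(X)$) yields the short exact sequences asserted in (1) and (2). Next, I would observe that the kernel of the localisation map $M \to M[\tfrac{1}{v}]$ is exactly the set of elements killed by some power of $v$, namely $\torsion M$; its image is therefore naturally isomorphic to $M/\torsion M = \cotorsion M$.

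To identify $M[\tfrac{1}{v}]$ with periodic $K$-theory, I would invoke Lemma \ref{lem:alg_localization_periodic} in the cohomological case, which under the connectivity hypothesis on $X$ gives $ku^*(X)[\tfrac{1}{v}] \cong KU^*(X)$; composed with the image description of $\cotorsion$ above this yields the required isomorphism in (2). In the homological case no connectivity hypothesis is needed: the spectrum-level map $ku \to KU$ identifies $KU$ with the $v$-localisation of $ku$ (Bott periodicity), and because homology commutes with sequential homotopy colimits one obtains $ku_*(X)[\tfrac{1}{v}] \cong KU_*(X)$ for an arbitrary spectrum $X$. Combining this with the image description of $\cotorsion$ gives (1).

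The one point requiring care is that the algebraic localisation $M \to M[\tfrac{1}{v}]$ matches the topologically induced map $ku_\bullet(X) \to KU_\bullet(X)$ in the appropriate variance, but this is immediate from the naturality of the unit $ku \to KU$ as the $v$-inverting map of $ku$-modules. There is no substantive obstacle: the statement is essentially a packaging of the general torsion--cotorsion sequence together with Bott-periodic recognition of the cotorsion as the image in periodic $K$-theory.
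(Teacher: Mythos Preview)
Your proposal is correct and follows exactly the line the paper intends: the short exact sequences come directly from the hereditary torsion theory on $\zed[v]$-modules, and the identification of $\cotorsion$ with the image in periodic $K$-theory is the standard fact that $\ker(M \to M[\tfrac{1}{v}]) = \torsion M$, combined with Lemma~\ref{lem:alg_localization_periodic} (cohomology, under connectivity) and the homotopy-colimit identification $KU_*(X) \cong ku_*(X)[\tfrac{1}{v}]$ (homology). The paper in fact gives no proof of this lemma, treating it as immediate from the definitions and the preceding material; your write-up supplies precisely the details the paper suppresses.
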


\begin{lem}
\label{lem:ses_ku_HZ}
 For $X$ a spectrum, there are natural short exact sequences 
\begin{eqnarray*}
 0 \rightarrow ku_* (X) / v \rightarrow H\zed_* (X) \rightarrow \annih ku_{*-3} (X) 
\rightarrow 0;
\\
0 \rightarrow ku^* (X) / v \rightarrow H\zed^* (X) \rightarrow \annih ku^{*+3} (X) 
\rightarrow 0.
\end{eqnarray*}
Moreover, there are natural inclusions
\begin{eqnarray*}
 \mathrm{Im} \qfrak
\subset 
\big(\torsion ku_* (X)\big) / v 
\subset 
ku_* (X) / v
\subset 
\mathrm{Ker} \qfrak;
\\
\mathrm{Im} \qfrak 
\subset 
\big(\torsion ku^* (X)\big) / v 
\subset 
ku^* (X) / v
\subset 
\mathrm{Ker} \qfrak.
\end{eqnarray*}
\end{lem}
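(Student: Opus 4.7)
The plan is to derive both claims from the long exact sequences in (co)homology associated with the defining cofibre sequence
\[
\Sigma^2 ku \xrightarrow{v} ku \xrightarrow{\pi} H\zed \xrightarrow{\delta} \Sigma^3 ku,
\]
combined with the factorization $\qfrak = \pi \circ \delta$ (suitably suspended) that is essentially recorded in the diagram in the proof of Lemma \ref{lem:Q1_qfrak}.

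First I would smash the cofibre sequence with $X$ (for homology) and apply $[X,-]_*$ (for cohomology), producing the long exact sequence
\[
\cdots \to ku_{n-2}(X) \xrightarrow{v} ku_n(X) \xrightarrow{\pi_*} H\zed_n(X) \xrightarrow{\delta_*} ku_{n-3}(X) \xrightarrow{v} ku_{n-1}(X) \to \cdots
\]
and, with the index shifting the other way, $\cdots \to ku^{n+2}(X) \xrightarrow{v} ku^n(X) \to H\zed^n(X) \to ku^{n+3}(X) \xrightarrow{v} ku^{n+1}(X) \to \cdots$. Splicing each sequence around $H\zed(X)$ yields the stated short exact sequences, using that $\mathrm{Im}(\pi_*) = ku_*(X)/v$ (by exactness at $ku_n(X)$) and that the kernel of the subsequent $v$-map is by definition $\annih ku$; naturality in $X$ is automatic from functoriality of the (co)homology theories.

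For the inclusion chain I would argue as follows. The rightmost inclusion $ku_*(X)/v \subset \ker \qfrak$ is immediate from exactness: $ku_*(X)/v = \mathrm{Im}(\pi_*) \subset \ker(\delta_*)$, hence is annihilated by $\qfrak = \pi_* \circ \delta_*$. The middle inclusion is precisely the monomorphism $(\torsion M)/v \hookrightarrow M/v$ of Lemma \ref{lem:ses_torsion_cotorsion}. For the leftmost inclusion, exactness identifies $\mathrm{Im}(\delta_*) = \annih ku_{*-3}(X)$, so $\mathrm{Im}(\qfrak) = \pi_*\bigl(\annih ku_{*-3}(X)\bigr)$; since $\annih ku \subset \torsion ku$ and $\pi_*$ factors as the projection $ku \twoheadrightarrow ku/v$ followed by the inclusion $ku/v \hookrightarrow H\zed$, the image of $\annih ku$ lies inside the image of $\torsion ku$ in $ku/v$, which is exactly $(\torsion ku)/v$ by Lemma \ref{lem:ses_torsion_cotorsion}.

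The cohomological statements are obtained by the identical formal argument with the index shift reversed. There is no substantive obstacle beyond bookkeeping: one must keep track of the $\pm 3$ shifts between homology and cohomology, and invoke the factorization $\qfrak = \pi \circ \delta$ from Lemma \ref{lem:Q1_qfrak} so that the composite of two consecutive maps in the extended cofibre sequence is recognised as the $k$-invariant appearing in the claim.
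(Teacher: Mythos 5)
Your proposal is correct and follows essentially the same route as the paper: splicing the long exact sequence of the cofibre sequence $\Sigma^2 ku \xrightarrow{v} ku \rightarrow H\zed$ to obtain the short exact sequences, using Lemma \ref{lem:ses_torsion_cotorsion} for the middle inclusion, and the factorization of $\qfrak$ through $\Sigma^3 ku$ for the outer ones. The only quibble is attributional: the factorization $\qfrak = \pi \circ \delta$ is the definition of $\qfrak$ (as the first $k$-invariant), not a consequence of Lemma \ref{lem:Q1_qfrak}, which instead compares $\qfrak$ with $Q_1$ mod $2$.
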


\begin{proof}
 The proofs for homology and cohomology are formally the same, hence consider $ku$-homology. The short exact sequence is induced by the cofibre sequence $\Sigma^2 ku \rightarrow ku \rightarrow H \zed$. The inclusion $(\torsion ku_* (X)) / v 
\subset  ku_* (X) / v$ is provided by Lemma \ref{lem:ses_torsion_cotorsion};  the outer inclusions are clear, from the definition of $\qfrak$.
\end{proof}

\begin{prop}
\label{prop:qfrak_formal}
Let $Z$ be a connective spectrum. 
\begin{enumerate}
 \item 
The following conditions are equivalent:
\begin{enumerate}
\item
$\annih ku_* (Z) = \torsion ku_* (Z)$;
\item
$\big(\torsion ku_* (Z)\big)/v \cong \mathrm{Im} \qfrak$;
\item
$ku_* (Z)/v \cong \mathrm{Ker}\qfrak $.
\end{enumerate}
If these conditions are satisfied, then 
$(ku_* (Z)/v) / \big((\torsion ku_* (Z))/v\big) \cong \mathrm{Ker}\qfrak / \mathrm{Im}\qfrak$ and there is a short exact sequence of $\zed[v]$-modules:
\[
 0 
\rightarrow 
\cotorsion ku_{*-2} (Z) 
\stackrel{v}{\rightarrow}
\cotorsion ku_{*} (Z) 
\rightarrow 
 \mathrm{Ker}\qfrak / \mathrm{Im}\qfrak
\rightarrow 0
\]
and  a pullback:
\[
 \xymatrix{
0 
\ar[r]
&
\torsion ku_* (Z)
\ar[r]
\ar[d]_\cong
&
ku_{*}(Z) 
\ar[r]
\ar[d]
&
\cotorsion ku_* (Z) 
\ar@{->>}[d]
\ar[r]
&
0
\\
0 \ar[r]
&
\mathrm{Im}\qfrak
\ar[r]
&
\mathrm{Ker}\qfrak  
\ar[r]
&
\mathrm{Ker}\qfrak  /\mathrm{Im}\qfrak
\ar[r]
&
0.
}
\]
\item
Suppose that, for each degree $d$, there exists an integer $N(d)$ such that $(  v^{N(d)} ku^* (Z)\cap\torsion ku^* (Z) ) ^d =0$, then the following conditions are equivalent:
\begin{enumerate}
\item
$\annih ku^* (Z) = \torsion ku^* (Z)$;
\item
$\big(\torsion ku^* (Z)\big)/v \cong \mathrm{Im} \qfrak$;
\item
$ku^* (Z)/v \cong \mathrm{Ker}\qfrak $.
\end{enumerate}
If these conditions are satisfied, then 
$(ku^* (Z)/v) / \big((\torsion ku^* (Z))/v\big) \cong \mathrm{Ker}\qfrak / \mathrm{Im}\qfrak$ and  there is a short exact sequence of $\zed[v]$-modules:
\[
 0 
\rightarrow 
\cotorsion ku^{*+2} (Z) 
\stackrel{v}{\rightarrow}
\cotorsion ku^{*} (Z) 
\rightarrow 
 \mathrm{Ker}\qfrak / \mathrm{Im}\qfrak
\rightarrow 0
\]
and  a pullback:
\[
 \xymatrix{
0 
\ar[r]
&
\torsion ku^* (Z)
\ar[r]
\ar[d]_\cong
&
ku^{*}(Z) 
\ar[r]
\ar[d]
&
\cotorsion ku^* (Z) 
\ar@{->>}[d]
\ar[r]
&
0
\\
0 \ar[r]
&
\mathrm{Im}\qfrak
\ar[r]
&
\mathrm{Ker}\qfrak  
\ar[r]
&
\mathrm{Ker}\qfrak  /\mathrm{Im}\qfrak
\ar[r]
&
0.
}
\]
\end{enumerate}
\end{prop}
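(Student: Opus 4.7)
The plan is to handle the homology and cohomology cases in parallel, as they are formally identical. In the homology case the finiteness hypothesis required in the cohomology case is automatic from connectivity of $Z$: for fixed degree $d$, $v^N ku_d(Z)$ lies in the image of $v^N : ku_{d-2N}(Z) \to ku_d(Z)$, which vanishes for $N$ large. Let $M$ denote $ku_*(Z)$ (resp.\ $ku^*(Z)$); then everything reduces to algebraic properties of $M$ as a graded $\zed[v]$-module together with the data of the map $\qfrak$, which by Lemma \ref{lem:Q1_qfrak} factors as the composite $\rho \circ \partial$ obtained from the cofibre sequence $\Sigma^2 ku \xrightarrow{v} ku \xrightarrow{\rho} H\zed \xrightarrow{\partial} \Sigma^3 ku$.

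The key step is to extract from the associated long exact sequence the identifications $\mathrm{Im}\,\partial = \annih M$ (with appropriate degree shift), $\mathrm{Ker}\,\partial = M/v$, and $\mathrm{Ker}\,\rho = vM$. From these, two consequences follow. First, $\mathrm{Im}\,\qfrak$ equals the image of the natural composite $\annih M \hookrightarrow \torsion M \twoheadrightarrow (\torsion M)/v$ inside $M/v$. Second, the quotient $\mathrm{Ker}\,\qfrak/(M/v)$ is naturally isomorphic to $\annih M \cap vM$, the subgroup of $\annih M$ lying in the image of $v$.

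Given these, condition (b) is equivalent to the surjectivity of $\annih M \to (\torsion M)/v$, which under the finiteness hypothesis is equivalent to (a) by Lemma \ref{lem:ann_torsion}. Condition (c) amounts to $\annih M \cap vM = 0$; since any $vy \in \annih M$ satisfies $v^2 y = 0$, this is the statement that every $v^2$-annihilated element is already $v$-annihilated, and a short induction on the torsion order upgrades this to (a). Hence (a), (b), (c) are equivalent.

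Finally, assuming (a)-(c), the remaining assertions follow formally: the quotient isomorphism $(M/v)/\big((\torsion M)/v\big) \cong \mathrm{Ker}\,\qfrak/\mathrm{Im}\,\qfrak$ is immediate from (b) and (c), and substituting it into the cotorsion sequence of Lemma \ref{lem:ses_torsion_cotorsion} yields the displayed short exact sequence. The commutative diagram of short exact sequences combines the natural projection $M \twoheadrightarrow M/v \cong \mathrm{Ker}\,\qfrak$ with the identifications $\torsion M = \annih M \cong \mathrm{Im}\,\qfrak$ (using (a) to absorb the quotient by $v$) and the induced surjection $\cotorsion M \twoheadrightarrow \cotorsion M/v \cong \mathrm{Ker}\,\qfrak/\mathrm{Im}\,\qfrak$. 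The main conceptual step will be the second of the two identifications above, since it requires tracing the long exact sequence carefully to see that the defect $\mathrm{Ker}\,\qfrak/(M/v)$ is precisely $\annih M \cap vM$; this is the bridge from the spectrum-level condition (c) to the purely algebraic condition (a).
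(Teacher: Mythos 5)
Your proposal is correct and follows essentially the same route as the paper: both reduce everything to the long exact sequence of the cofibration $\Sigma^2 ku \to ku \to H\zed$ (packaged in the paper as Lemma \ref{lem:ses_ku_HZ}), identify $\mathrm{Im}\,\qfrak$ with the image of $\annih M \to (\torsion M)/v$, and invoke Lemma \ref{lem:ann_torsion}, with the finiteness hypothesis being automatic in homology by connectivity. The only (harmless) variation is in (a)$\Leftrightarrow$(c): you identify the defect $\mathrm{Ker}\,\qfrak/(M/v)$ with $\annih M \cap vM$ and run a direct induction on torsion order, whereas the paper deduces it from a five-lemma comparison of the two short exact sequences through $H\zed_*(Z)$ together with the injectivity clause of Lemma \ref{lem:ann_torsion}.
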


\begin{proof}
The hypotheses imply that, for $M \in \{ ku_* (Z) , ku^* (Z) \}$, the three conditions of Lemma \ref{lem:ann_torsion} are equivalent.

The equivalence of the conditions (a), (b), (c) follows from an analysis of the short exact sequences of Lemma \ref{lem:ses_ku_HZ};  consider $ku$-homology (the argument for $ku$-cohomology is similar), so that there is  a commutative diagram:
\[
 \xymatrix{
&
&
\mathrm{Ker} \qfrak
\ar@{^(->}[d]
\\
0 
\ar[r]
&
ku_* (Z) / v 
\ar[r]
\ar@{^(->}[ru]
&
H\zed_* (Z) 
\ar@{->>}[d]
\ar[r]
&
\annih ku_{*-3} (Z) 
\ar[r]
\ar@{->>}[ld] 
&
0
\\
& 
(\torsion ku_* (Z) ) / v
\ar@{^(->}[u]
&
\mathrm{Im} \qfrak, 
\ar@{^(->}[l] 
}
\]
in which the middle row and column are both short exact.

By the five lemma,  $\mathrm{Im} \qfrak  \cong \annih ku_{*-3} (Z) $ if and only if $\mathrm{Ker} \qfrak \cong ku_* (Z) / v $. Moreover, Lemma \ref{lem:ann_torsion} implies that the following three conditions are equivalent:
\begin{enumerate}
 \item 
$\annih ku_* (Z) = \torsion ku_* (Z)$;
\item 
$\mathrm{Im} \qfrak  \cong (\torsion ku_* (Z))/v $;
\item
$\mathrm{Im} \qfrak  \cong \annih ku_{*-3} (Z) $. 
\end{enumerate}
This shows that conditions (a), (b), (c) are equivalent. The consequences follow, using Lemma \ref{lem:ses_torsion_cotorsion} to provide the short exact sequence which calculates $\cotorsion ku_* (Z) /v$.
\end{proof}

\begin{rem}\ 
\begin{enumerate}
 \item 
Under the hypotheses of the Proposition,  $ku_* (Z)$ (respectively $ku^* (Z)$) is determined by $(H\zed_ * (Z) , \qfrak)$ (resp. $(H \zed^* (Z),\qfrak)$), up to the analysis of the $v$-adic filtration of $\cotorsion ku_* (Z)$ (resp. $\cotorsion ku^*(Z)$).
\item
If  $H\zed_* (Z) \rightarrow H \field _* (Z)$ is a monomorphism (so that the hypotheses of Lemma \ref{lem:Bockstein_exact_Q1_qfrak} are satisfied), then the data is provided by $H \field_* (Z)$, considered as a $\Lambda (Q_0, Q_1)$-module and there is a form of duality between $ku_* (Z)$ and $ku^* (Z)$.
\item
Related considerations for connected Morava $K$-theories occur in  \cite{lellmann}.
\end{enumerate}
\end{rem}

The nilpotency hypothesis of Proposition \ref{prop:qfrak_formal} is supplied by the following result when considering the $ku$-cohomology of spaces.

\begin{lem}
\label{lem:torsion_bound}\cite[Lemma 1.5.8]{bruner_greenlees}
 Let $Y$ be a space such that $ku^* (Y_+)$ is a Noetherian $\zed[v]$-algebra. Then there exists a natural
number $N$ such that $$v^N \torsion ku^* (Y_+) =0.$$
\end{lem}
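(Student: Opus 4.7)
The plan is to observe that $\torsion ku^*(Y_+)$ is an ideal of the $\mathbb{Z}[v]$-algebra $ku^*(Y_+)$, hence finitely generated by the Noetherian hypothesis; then $N$ can be chosen as the maximum of the $v$-exponents needed to annihilate a finite generating set.

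In more detail, first I would check that $\torsion ku^*(Y_+)$ is not merely a $\mathbb{Z}[v]$-submodule but in fact an ideal of $ku^*(Y_+)$: if $x \in \torsion ku^*(Y_+)$ with $v^n x = 0$ and $a \in ku^*(Y_+)$ is arbitrary, then $v^n (a x) = a (v^n x) = 0$ because $v$ lies in the center of the graded ring, so $a x \in \torsion ku^*(Y_+)$. Thus $\torsion ku^*(Y_+)$ is a (two-sided) ideal in the graded-commutative $\mathbb{Z}[v]$-algebra $ku^*(Y_+)$.

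Next, invoking the Noetherian hypothesis on $ku^*(Y_+)$, this ideal admits a finite generating set $\{x_1, \dots, x_k\}$ as an ideal of $ku^*(Y_+)$. For each generator $x_i$ there is some $N_i \in \nat$ with $v^{N_i} x_i = 0$; set $N := \max_{1 \leq i \leq k} N_i$. Any element $y \in \torsion ku^*(Y_+)$ can be written as $y = \sum_{i=1}^k a_i x_i$ for some $a_i \in ku^*(Y_+)$, and then
\[
v^N y = \sum_{i=1}^k a_i \bigl( v^N x_i \bigr) = 0,
\]
since $v^N x_i = v^{N - N_i} (v^{N_i} x_i) = 0$ for each $i$. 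This gives $v^N \torsion ku^*(Y_+) = 0$, as required.

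There is essentially no obstacle: the only subtle point is the verification that $\torsion ku^*(Y_+)$ is an ideal (rather than merely a submodule), which uses that $v$ is central; after that, the Noetherian property reduces the statement to a finite check on generators.
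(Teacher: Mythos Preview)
Your argument is correct and is the standard proof: $\torsion ku^*(Y_+)$ is an ideal because $v$ is central, the Noetherian hypothesis gives a finite generating set, and the maximum of the annihilating exponents works. The paper itself does not supply a proof of this lemma, merely citing \cite[Lemma 1.5.8]{bruner_greenlees}; your argument is exactly the one given there.
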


\begin{exam}
 For $G$ a finite group, $ku^* (BG_+)$ is a Noetherian $\zed [v]$-algebra (see \cite[Section 1.1]{bruner_greenlees} for example).
\end{exam}

When considering the unreduced $ku$-cohomology of a space, under the cohomological hypothesis of Proposition \ref{prop:qfrak_formal}, the following result is clear.

\begin{prop}
\label{prop:mult_ku_cohom}
(Cf. \cite{bruner_greenlees}.)
 Let $Y$ be a space such that $\torsion ku^* (Y_+) \cong \annih ku^* (Y_+)$, then the algebra structure of $ku^* (Y_+)$ is determined by the induced algebra structures of $ku^* (Y_+)/v$ and $\cotorsion ku^* (Y_+)$. In particular, there is a monomorphism of algebras 
\[
 ku^* (Y_+) 
\rightarrow 
KU^* (Y_+)
\prod 
H\zed^* (Y_+)
\]
where 
$\cotorsion ku^* (Y_+)$ is considered as a subalgebra of $KU^* (Y_+)$ and  $ku^* (Y_+)/ v  \cong \mathrm{Ker} \qfrak$, as a subalgebra of $H\zed^* (Y_+)$. 
\end{prop}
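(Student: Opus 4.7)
The plan is to upgrade the $\zed[v]$-module pullback square furnished by Proposition \ref{prop:qfrak_formal}(2) to a pullback square of graded commutative algebras, from which both assertions drop out.

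First I would observe that the hypothesis $\torsion ku^*(Y_+) \cong \annih ku^*(Y_+)$ is precisely condition (a) of Proposition \ref{prop:qfrak_formal}(2), with the nilpotency hypothesis of that proposition supplied by Lemma \ref{lem:torsion_bound} in the motivating Noetherian case. Consequently one has $ku^*(Y_+)/v \cong \mathrm{Ker}\qfrak$ and a pullback square
\[
 \xymatrix{
ku^*(Y_+) \ar[r] \ar[d] & \cotorsion ku^*(Y_+) \ar@{->>}[d] \\
\mathrm{Ker}\qfrak \ar[r] & \mathrm{Ker}\qfrak/\mathrm{Im}\qfrak
}
\]
of $\zed[v]$-modules.

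The second step is to check that each of the four arrows is an algebra homomorphism. The top horizontal is the quotient by $\torsion ku^*(Y_+)$, which is a two-sided ideal because a product involving a $v$-torsion element is again $v$-torsion. The left vertical factors as
\[
 ku^*(Y_+) \twoheadrightarrow ku^*(Y_+)/(v) \stackrel{\cong}{\rightarrow} \mathrm{Ker}\qfrak \hookrightarrow H\zed^*(Y_+),
\]
each constituent being an algebra map, the last one coming from the ring spectrum map $ku \to H\zed$. For the right vertical and the bottom horizontal to be algebra maps, $\mathrm{Im}\qfrak$ must be a two-sided ideal of the subalgebra $\mathrm{Ker}\qfrak \subset H\zed^*(Y_+)$; this is the main technical point, and I would deduce it from the derivation property of $\qfrak$ on $H\zed^*(Y_+)$, which follows from the analogous property of the Milnor primitive $Q_1$ on $H\field^*(Y_+)$ via Lemma \ref{lem:Q1_qfrak}.

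Once the square is recognised as a pullback in graded commutative algebras, the algebra structure on $ku^*(Y_+)$ is reconstructed as the fibre product of $\cotorsion ku^*(Y_+)$ and $ku^*(Y_+)/v \cong \mathrm{Ker}\qfrak$ over $\mathrm{Ker}\qfrak/\mathrm{Im}\qfrak$, which gives the first claim. The canonical injection from a pullback into the product of its two legs then yields
\[
 ku^*(Y_+) \hookrightarrow \cotorsion ku^*(Y_+) \prod \mathrm{Ker}\qfrak,
\]
and post-composing with the algebra inclusions $\cotorsion ku^*(Y_+) \hookrightarrow KU^*(Y_+)$ (from Lemma \ref{lem:ses_tors_cotors}(2) together with Lemma \ref{lem:alg_localization_periodic}, using connectivity of $\Sigma^\infty Y_+$) and $\mathrm{Ker}\qfrak \hookrightarrow H\zed^*(Y_+)$ produces the required algebra monomorphism into $KU^*(Y_+) \prod H\zed^*(Y_+)$. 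The hardest step will be the algebra-theoretic compatibility, namely the identification of $\mathrm{Im}\qfrak$ as an ideal in $\mathrm{Ker}\qfrak$; everything else is formal manipulation of the pullback and the inclusions $\cotorsion \hookrightarrow KU^*$ and $\mathrm{Ker}\qfrak \hookrightarrow H\zed^*$ already established.
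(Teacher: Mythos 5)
Your proposal is correct, and in fact it supplies more detail than the paper does: the paper offers no proof of this Proposition, introducing it with the remark that under the cohomological hypothesis of Proposition \ref{prop:qfrak_formal} ``the following result is clear''. Your reconstruction via the pullback square of Proposition \ref{prop:qfrak_formal}(2) is exactly the intended reading, and the formal steps (the pullback injecting into the product of its legs, composing with $\cotorsion ku^*(Y_+)\hookrightarrow KU^*(Y_+)$ from Lemmas \ref{lem:ses_tors_cotors} and \ref{lem:alg_localization_periodic}, and $\mathrm{Ker}\qfrak\hookrightarrow H\zed^*(Y_+)$) are all sound. One remark on the hypotheses: given condition (a) directly, the consequences of Proposition \ref{prop:qfrak_formal}(2) follow without the nilpotency hypothesis (which is only needed to make (c) imply (a)), so the appeal to Lemma \ref{lem:torsion_bound} is unnecessary here.

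The one step you flag as the ``main technical point'' --- that $\mathrm{Im}\qfrak$ is an ideal of $\mathrm{Ker}\qfrak$ --- is argued in a way that is slightly shakier than you suggest. Lemma \ref{lem:Q1_qfrak} only gives the relation $\rho\circ\qfrak=Q_1\circ\rho$; to transfer the derivation property of $Q_1$ to $\qfrak$ along this relation you need $\rho\colon H\zed^*(Y_+)\to H\field^*(Y_+)$ to be injective, which is the hypothesis of Lemma \ref{lem:Bockstein_exact_Q1_qfrak} and is \emph{not} among the standing assumptions of the Proposition (without it you only learn that the deviation from the Leibniz rule lies in $2H\zed^*(Y_+)$). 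There is a cleaner route that avoids cohomology operations entirely: under the hypothesis, condition (b) of Proposition \ref{prop:qfrak_formal}(2) identifies $\mathrm{Im}\qfrak$ with $\bigl(\torsion ku^*(Y_+)\bigr)/v$, which by Lemma \ref{lem:ses_torsion_cotorsion} is the image of the ideal $\torsion ku^*(Y_+)$ under the surjective algebra map $ku^*(Y_+)\twoheadrightarrow ku^*(Y_+)/v\cong\mathrm{Ker}\qfrak$, hence is an ideal; with this substitution every arrow in your square is an algebra map by construction and the rest of your argument goes through verbatim.
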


\subsection{The $ku$-cohomology of $BV_+$}
\label{subsect:ku_cohom_BV}

The above discussion applies in considering the group $ku$-cohomology $ku^* (BG_+)$.  The periodic $K$-theory $KU^* (BG_+) $ of the finite group $G$ is known by the Atiyah-Segal completion theorem to be trivial in odd degrees and isomorphic in even degrees to the completion  $R(G) ^{\wedge} _I$, where $I$ is the augmentation ideal of the
complex representation ring $R (G)$.

Here we restrict to the case of elementary abelian $2$-groups, and $V
\mapsto ku^* (BV_+)$ is considered as a contravariant functor of $V \in \obj \fdvs$. The result is proved by applying  Proposition \ref{prop:qfrak_formal}, for which an understanding of $H\zed^* (BV_+) $ and the action of $\qfrak$ is required.

\begin{prop}
\label{prop:HZ_cohom_BV}
 There are natural isomorphisms:
\begin{eqnarray*}
 H \zed ^n (BV_+) 
&\cong&
 \left\{
\begin{array}{ll}
\zed & n= 0 \\
K_n (V^\sharp) & n >0 
\end{array}
\right.
\\
(\mathrm{Im} \qfrak)^n &\cong& L_n (V^\sharp) \\
(\mathrm{Ker} \qfrak)^n & \cong &  \left\{
\begin{array}{ll}
\zed & n= 0 \\
\tilde{L}_n (V^\sharp) & n >0. 
\end{array}
\right.
\end{eqnarray*}
\end{prop}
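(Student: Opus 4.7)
The plan is to deduce all three identifications from the classical natural isomorphism of graded $\field$-algebras
\[
H\field^*(BV_+) \cong S^*(V^\sharp),
\]
under which the Milnor derivations $Q_0,Q_1$ on $H\field^*(BV_+)$ correspond to the derivations of Section~\ref{sect:milnor}. This identification is contravariantly functorial in $V \in \obj\fdvs$, since $V\mapsto V^\sharp$ is contravariant while $S^*$ is covariant.

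First I would identify $H\zed^*(BV_+)$. Since $H\zed^n(BV_+)$ is $2$-torsion for $n>0$, multiplication by $2$ is zero and the long exact sequence arising from $0\to\zed\xrightarrow{2}\zed\to\field\to 0$ decomposes, for each $n>0$, into a short exact sequence
\[
0\to H\zed^n(BV_+)\xrightarrow{\rho}H\field^n(BV_+)\xrightarrow{\beta}H\zed^{n+1}(BV_+)\to 0,
\]
so $\rho$ identifies $H\zed^n(BV_+)$ with $\ker\bigl(Q_0: S^n(V^\sharp)\to S^{n+1}(V^\sharp)\bigr)=K_n(V^\sharp)$ (Notation~\ref{nota:Kn}); degree zero is immediate.

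Next, I would compute $\mathrm{Ker}\,\qfrak$ and $\mathrm{Im}\,\qfrak$ by invoking Lemma~\ref{lem:Bockstein_exact_Q1_qfrak}, whose hypothesis holds for $Y=\Sigma^\infty BV$ by Example~\ref{exam:BV_2torsion}. Via the inclusion $\rho$, the operation $\qfrak$ on $H\zed^*(BV_+)$ is the restriction of $Q_1$ to the subfunctor $K_*(V^\sharp)\subset S^*(V^\sharp)$. By the definitions of $L_n$ and $\tilde L_n$ in Section~\ref{subsect:Q0_kernel}, this immediately yields $(\mathrm{Im}\,\qfrak)^n \cong L_n(V^\sharp)$ for all $n$ (using Example~\ref{exam:L_low_degree} in low degrees where $\qfrak$ is trivially zero) and $(\mathrm{Ker}\,\qfrak)^n \cong \tilde L_n(V^\sharp)$ for $n>0$. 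In degree zero, $\qfrak(1)\in H\zed^3(BV_+)$ has trivial mod-$2$ reduction since $Q_1(1)=0$ as a derivation, and $\rho$ is injective in positive degrees, so $\qfrak(1)=0$ and $(\mathrm{Ker}\,\qfrak)^0=\zed$.

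There is no genuinely hard step; the argument is bookkeeping assembled from the two inputs (the mod-$2$ cohomology of $BV$ together with its $Q_i$-action, and Lemma~\ref{lem:Bockstein_exact_Q1_qfrak}). The only point requiring care is to check that the Milnor operations on $H\field^*(BV_+)$ match, under the Cartan isomorphism $H\field^*(BV_+)\cong S^*(V^\sharp)$, the derivations of Section~\ref{sect:milnor} and that all identifications are natural in $V$; both reduce to the naturality of the Frobenius $x\mapsto x^{2^{i+1}}$ on $V^\sharp$ and of the coproduct on $S^*(V^\sharp)\cong H\field^*(BV_+)$.
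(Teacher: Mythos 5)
Your proposal is correct and follows the same route as the paper: identify $H\field^*(BV_+)$ with $S^*(V^\sharp)$, embed $H\zed^*(BV)$ as the kernel of the Bockstein (hence as $K_*(V^\sharp)$), and read off $\qfrak$ as $Q_1$ via Lemma \ref{lem:Bockstein_exact_Q1_qfrak} together with the definitions of $L_n$ and $\tilde L_n$. Your extra care in degree zero and in low degrees is exactly the "behaviour in unreduced cohomology" the paper's one-line proof alludes to.
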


\begin{proof}
 The algebra $H\field^* (BV_+)$ is naturally isomorphic to $S^* (V^\sharp)$ and integral reduced cohomology  $H\zed^* (BV)$ embeds in $H\field^* (BV)$ as the kernel of the Bockstein operator. Hence (paying attention to the behaviour in unreduced cohomology),  the result follows from Lemma \ref{lem:Bockstein_exact_Q1_qfrak}, using the definition of the functors $K_n$, $L_n$ and $\tilde{L}_n$ from Section \ref{sect:milnor}.
\end{proof}

\begin{lem}
\label{lem:T_degree_one}
 There are identities $\torsion ku^* (B0_+) = 0 = \torsion ku^* (B\zed/2_+) $.  
\end{lem}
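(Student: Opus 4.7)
The plan is to treat the two cases separately.

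For $V=0$, the identification $ku^*(B0_+) \cong \zed[v]$ as a polynomial ring immediately yields $\torsion ku^*(B0_+) = 0$.

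For $V = \zed/2$, I would prove the equivalent statement that multiplication by $v$ is injective on $ku^*(B\zed/2_+)$. The cofibre sequence $\Sigma^2 ku \stackrel{v}{\rightarrow} ku \stackrel{\pi}{\rightarrow} H\zed$ induces the long exact sequence
\[
 \cdots \rightarrow H\zed^{n-1}(B\zed/2_+) \stackrel{\qfrak}{\rightarrow} ku^{n+2}(B\zed/2_+) \stackrel{v}{\rightarrow} ku^n(B\zed/2_+) \stackrel{\pi}{\rightarrow} H\zed^n(B\zed/2_+) \rightarrow \cdots,
\]
so $v$-injectivity is equivalent to surjectivity of $\pi$ in each degree. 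By Proposition \ref{prop:HZ_cohom_BV}, $H\zed^*(B\zed/2_+)$ equals $\zed$ in degree $0$, is isomorphic to $K_n(\field) \cong \field$ in each positive even degree $n = 2k$ (generated by the classical Chern class power $c_1(L)^k$, with $L$ the sign representation), and vanishes otherwise.

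To verify surjectivity of $\pi$, I would lift each generator through a $ku$-theoretic Chern class: $1 \in ku^0$ covers $1 \in H\zed^0$, and the connective first Chern class of $L$, an element $y \in ku^2(B\zed/2_+)$, projects to $c_1(L) \in H\zed^2$ by naturality of complex orientations with respect to $\pi$, whence $y^k$ projects to the generator of $H\zed^{2k}(B\zed/2_+)$. Thus $\pi$ is surjective and $\torsion ku^*(B\zed/2_+) = 0$.

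There is no substantive obstacle; the argument amounts to lifting Chern-class generators. An alternative, more functorial route would invoke Proposition \ref{prop:qfrak_formal}: the observation that $(\mathrm{Im}\qfrak)^n \cong L_n(\field) = 0$ (since $K_{n-3}(\field)$ and $K_n(\field)$ cannot both be nonzero), combined with Lemma \ref{lem:torsion_bound}, would show that $v$-divisibility of $\torsion ku^*(B\zed/2_+)$ forces its triviality.
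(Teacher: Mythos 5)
Your main argument is correct, but it takes a different route from the paper, whose proof simply cites the explicit Gysin-sequence computation of $ku^*(B\zed/2_+)$ from \cite[Section 2.2]{bruner_greenlees} (a $\zed[v]$-algebra generated by the Euler class of the sign representation, visibly $v$-torsion-free). You avoid the full computation: you reduce $v$-injectivity to surjectivity of $\pi : ku^*(B\zed/2_+)\rightarrow H\zed^*(B\zed/2_+)$ via the long exact sequence of $\Sigma^2 ku\stackrel{v}{\rightarrow}ku\rightarrow H\zed$, and then lift the generators $c_1(L)^k$ through the connective first Chern class. This is more self-contained (it needs only $H\zed^*(B\zed/2_+)$ and the complex orientation of $ku$, not the relation ideal), while the paper's citation delivers the complete multiplicative structure for free; both ultimately rest on the same input, the $ku$-theoretic Euler class of $L$. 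One cosmetic point: the arrow you label $\qfrak$ is the boundary map $H\zed^{n-1}\rightarrow ku^{n+2}$, whereas $\qfrak$ in the paper denotes its composite with $\pi$; this does not affect the argument.

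By contrast, the ``alternative, more functorial route'' in your final sentence does not work as stated. Knowing $(\mathrm{Im}\,\qfrak)^*\cong L_*(\field)=0$ only says that the composite $\annih ku^*(B\zed/2_+)\rightarrow \big(\torsion ku^*(B\zed/2_+)\big)/v$ is zero, i.e.\ $\annih\subset v\,\torsion$; together with the bound $v^N\torsion=0$ this does \emph{not} force $\torsion=0$ (the module $\zed[v]/(v^2)$ satisfies both conditions with nonzero torsion). What is needed is condition (b) of Proposition \ref{prop:qfrak_formal}, that $\mathrm{Im}\,\qfrak\rightarrow(\torsion ku^*)/v$ is \emph{onto}; but establishing that surjectivity for $V=\field$ is precisely the base case for which the proof of Theorem \ref{thm:ku_cohom_BV} invokes the present lemma, so this route is circular. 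Keep the Chern-class argument.
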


\begin{proof}
 This follows  from the identification of $ku^* (B\zed/2_+)$ using the Gysin sequence (cf. \cite[Section
2.2]{bruner_greenlees}).
\end{proof}

\begin{thm}
\label{thm:ku_cohom_BV}
For $V \in \obj \fdvs$, there are natural isomorphisms
\begin{eqnarray*}
 \torsion ku^n (BV_+) &\cong& \annih ku^n (BV_+) \cong  (\mathrm{Im} \qfrak)^n \\
ku^n (BV_+)/ v & \cong & (\mathrm{Ker} \qfrak) ^n 
\end{eqnarray*}
and  $\torsion ku^n (BV_+ ) \rightarrow
ku^n (BV_+) / v$  is an isomorphism for $n$ odd and, for $n =2d>0$, there is a natural short exact sequence:
\[
 0 \rightarrow 
\torsion ku^{2d} (BV)
\rightarrow ku^{2d} (BV) / v
\rightarrow 
p_d \Ibar (V^\sharp) 
\rightarrow 
0.
\]
The surjection $\cotorsion ku^* (BV_+ ) \twoheadrightarrow \cotorsion ku^* (BV_+) /v $ induces a pullback diagram of short exact
sequences
\[
 \xymatrix{
0 
\ar[r]
&
\torsion ku^*  (BV_+) 
\ar[r]
\ar@{=}[d]
&
ku^{*}(BV_+) 
\ar[r]
\ar[d]
&
\cotorsion ku^{*} (BV_+) 
\ar@{->>}[d]
\ar[r]
&
0
\\
0 
\ar[r]
&
\mathrm{Im} \qfrak
\ar[r]
&
\mathrm{Ker} \qfrak
\ar[r]
&
 (\cotorsion ku^{*} (BV_+)) /v
\ar[r]
&
0.
}
\]
There are natural isomorphisms $\cotorsion ku^* (BV_+) \cong \zed[v] \oplus \cotorsion ku^* (BV)$ and 
\[
 \cotorsion ku^{2d}(BV) \cong \Pbarzedtwo^d (V^\sharp)
\]
inducing an isomorphism of short exact sequences (for $d>0$):
\[
 \xymatrix{
0 
\ar[r]
&
\cotorsion ku^{2d+2} (BV) 
\ar[d]_\cong
\ar[r]^{v}
&
\cotorsion ku^{2d} (BV) 
\ar[d]_\cong
\ar[r]
&
p_d \Ibar (V^\sharp)
\ar@{=}[d]
\ar[r]
&
0
\\
0
\ar[r]
&
\Pbarzedtwo^{d+1} (V^\sharp)
\ar[r]
&
\Pbarzedtwo^d (V^\sharp)
\ar[r]
&
p_d \Ibar  (V^\sharp)
\ar[r]
&
0.
}
\]
\end{thm}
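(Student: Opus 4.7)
The strategy is to apply Proposition~\ref{prop:qfrak_formal} to $Z = \Sigma^\infty BV_+$ and combine its conclusions with the explicit functorial computations of Sections~\ref{sect:groupring} and~\ref{sect:milnor}. The nilpotency hypothesis for the cohomological part of the Proposition is supplied by Lemma~\ref{lem:torsion_bound}, since $ku^*(BV_+)$ is a Noetherian $\zed[v]$-algebra for the finite group $V$. The essential requirement is then the equivalent condition (a) of Proposition~\ref{prop:qfrak_formal}, namely $\annih ku^*(BV_+) = \torsion ku^*(BV_+)$; the rank $\leq 1$ cases are trivial by Lemma~\ref{lem:T_degree_one}, and the general case is addressed below in tandem with the cotorsion analysis.

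Granting these hypotheses, the first set of isomorphisms together with the pullback diagram follow immediately by substituting the identifications $(\mathrm{Im}\,\qfrak)^n \cong L_n(V^\sharp)$ and $(\mathrm{Ker}\,\qfrak)^n \cong \tilde L_n(V^\sharp)$ (for $n > 0$) from Proposition~\ref{prop:HZ_cohom_BV} into the conclusions of Proposition~\ref{prop:qfrak_formal}. The parity statement and the short exact sequence with quotient $p_d \Ibar(V^\sharp)$ are then immediate from Proposition~\ref{prop:homology_K_Q1}, which computes $\tilde L_n / L_n$ as $p_d\Ibar$ for $n = 2d > 0$ and as trivial for odd $n$.

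For the cotorsion identification, the Atiyah-Segal completion theorem gives $KU^0(BV_+) \cong R(V)^\wedge_I$. Since complex characters of the elementary abelian $2$-group $V$ are naturally identified with $V^\sharp$, the representation ring is canonically isomorphic as a ring to the integral group ring $\Pzed(V^\sharp)$, and by Corollary~\ref{cor:completion} the $I$-adic completion of the augmentation ideal coincides with the $2$-adic completion, yielding $\overline{R(V)}^\wedge_I \cong \Pbarzedtwo(V^\sharp)$; the $\zed[v]$-summand splits off from the base point inclusion. The higher-degree cotorsion is then determined inductively by comparing the short exact sequence $0 \to \cotorsion ku^{2d+2}(BV) \xrightarrow{v} \cotorsion ku^{2d}(BV) \to p_d\Ibar(V^\sharp) \to 0$ from Proposition~\ref{prop:qfrak_formal} against the sequence $0 \to \Pbarzedtwo^{d+1} \to \Pbarzedtwo^d \to p_d\Ibar \to 0$ from Proposition~\ref{prop:subquotients_Pbarzed}; the matching is forced by the uniqueness of non-trivial morphisms $\Pbarzedtwo^d \to \Ibar$ in $\ff$ (Lemma~\ref{lem:Pbarzed_filt_quot_surject}), and the ranks line up correctly because $v$ induces an isomorphism $\cotorsion ku^2(BV) \cong \cotorsion ku^0(BV)$ (the cokernel of $v$ in the reduced theory at $d=0$ is zero, as $\widetilde{\mathrm{Ker}\,\qfrak}^0 = 0$).

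The principal obstacle is the verification of $\annih = \torsion$ for ranks $\geq 2$. A clean way forward is to establish the cotorsion structure first independently of Proposition~\ref{prop:qfrak_formal}, using only Atiyah-Segal and the short exact sequence $0 \to \cotorsion ku^{*+2}(X) \xrightarrow{v} \cotorsion ku^*(X) \to \cotorsion ku^*(X)/v \to 0$ of Lemma~\ref{lem:ses_torsion_cotorsion}; this computes $\cotorsion ku^{2d}(BV)/v$ explicitly. Combining with the sandwich $\mathrm{Im}\,\qfrak \subset (\torsion ku^*)/v \subset ku^*/v \subset \mathrm{Ker}\,\qfrak$ of Lemma~\ref{lem:ses_ku_HZ} and the identification $\tilde L_n/L_n \cong p_d\Ibar$ of Proposition~\ref{prop:homology_K_Q1} forces all four terms of the sandwich to coincide with their expected positions, giving $(\torsion ku^*)/v \cong \mathrm{Im}\,\qfrak$ and hence, by Lemma~\ref{lem:ann_torsion}, $\annih ku^* = \torsion ku^*$.
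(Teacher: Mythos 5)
Your overall architecture --- Proposition \ref{prop:qfrak_formal} combined with the identifications of Propositions \ref{prop:HZ_cohom_BV} and \ref{prop:homology_K_Q1}, the nilpotency hypothesis from Lemma \ref{lem:torsion_bound}, and the inductive comparison of the cotorsion against Proposition \ref{prop:subquotients_Pbarzed} --- is the paper's. The gap is in your treatment of the one genuinely non-formal input, namely $\annih ku^* (BV_+) = \torsion ku^* (BV_+)$ in rank $\geq 2$. You propose to ``establish the cotorsion structure first independently of Proposition \ref{prop:qfrak_formal}, using only Atiyah--Segal and the short exact sequence of Lemma \ref{lem:ses_torsion_cotorsion}.'' This is circular: the Atiyah--Segal theorem identifies $KU^* (BV_+)$, not the image of $ku^{2d}(BV_+) \rightarrow KU^{2d}(BV_+)$ for $d>0$, which is what $\cotorsion ku^{2d}(BV_+)$ is; and Lemma \ref{lem:ses_torsion_cotorsion} expresses $\cotorsion M / v$ as $(M/v)/\big((\torsion M)/v\big)$, i.e. precisely in terms of the unknown $(\torsion M)/v$. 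So you have no independent handle on $\cotorsion ku^{2d}(BV)/v$, and the ``sandwich collapse'' that is supposed to yield $(\torsion ku^*(BV_+))/v \cong \mathrm{Im}\, \qfrak$ has no starting point.

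The missing idea is the functorial reduction to rank one. By the inclusions of Lemma \ref{lem:ses_ku_HZ} and the computation $(\mathrm{Ker}\,\qfrak/\mathrm{Im}\,\qfrak)^{2d} \cong p_d \Ibar (V^\sharp)$, the cokernel of $(\mathrm{Im}\,\qfrak)^{2d} \hookrightarrow \big(\torsion ku^{2d}(BV_+)\big)/v$ is a subfunctor of $V \mapsto p_d \Ibar (V^\sharp)$. Since $p_d \Ibar$ has simple socle $\Lambda^1$, Lemma \ref{lem:subfunctor_pdI_qdP} shows that such a subfunctor vanishes if and only if it vanishes on $V = \field$, and that is exactly Lemma \ref{lem:T_degree_one} ($\torsion ku^* (B\zed/2_+) = 0$). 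You cite both of these lemmas but never combine them at the point where they are needed; inserting this step (the algebraic form of Ossa's theorem, and the reason the whole functorial apparatus is set up) closes the gap, after which conditions (a)--(c) of Proposition \ref{prop:qfrak_formal} hold and the cotorsion identification proceeds as you describe.
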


\begin{proof}
 The first part of the Theorem  follows from Proposition \ref{prop:qfrak_formal},  since Lemma \ref{lem:subfunctor_pdI_qdP} implies that the cohomological hypothesis is satisfied. To apply the Proposition, it is sufficient to show that $\mathrm{Im} \qfrak \cong \big(\torsion ku^{*} (BV_+) \big)/v$. 

By  Proposition \ref{prop:HZ_cohom_BV} and Proposition \ref{prop:homology_K_Q1}, 
\[
 (\mathrm{Ker} \qfrak / \mathrm{Im} \qfrak)^n
 \cong 
\left\{
\begin{array}{ll}
 \zed & n=0 \\
0 & n \ \mathrm{odd}\\
p_d \Ibar (V^\sharp) & n =2d, \ d>0.
\end{array}
\right.
\]
In odd degrees $\mathrm{Im} \qfrak = \mathrm{Ker} \qfrak$, and the result follows from the inclusions given in Lemma \ref{lem:ses_tors_cotors}.

It remains to show that the inclusion $ (\mathrm{Im}\qfrak)^{2d}  \hookrightarrow \torsion ku^{2d} (BV_+) /v$ is an isomorphism for $d \in \nat$. For $d=0$, both terms are zero; for $d>0$,  the cokernel is a  subfunctor of $V \mapsto p_d \Ibar(V ^\sharp)$ by the  inclusions given in Lemma \ref{lem:ses_tors_cotors} and the above identification of $\mathrm{Ker} \qfrak / \mathrm{Im} \qfrak$, hence it suffices to show that the cokernel is trivial when evaluated on $V=\field$, by Lemma \ref{lem:subfunctor_pdI_qdP}. This follows  from the fact that  $\torsion ku^* (B\zed/2_+)=0$, by Lemma \ref{lem:T_degree_one}.

Finally, consider $\cotorsion ku^* (BV_+)$.  For $d=0$, the result is the Atiyah-Segal completion theorem;  the structure in higher degree follows by induction on $d$ from the results of Section \ref{subsect:Pbarzed}, in particular Proposition \ref{prop:subquotients_Pbarzed}, using the identification of the functors $$(\cotorsion ku^* (BV_+))/v \cong \mathrm{Ker} \qfrak / \mathrm{Im} \qfrak$$
 given above.
\end{proof}

\begin{rem}
Proposition \ref{prop:mult_ku_cohom} applies to $ku^* (BV_+)$ to give a description of the algebra structure (cf. \cite{bruner_greenlees}).
\end{rem}

\subsection{The $ku$-homology of elementary abelian $2$-groups}

The $ku$-homology of elementary abelian $2$-groups can be determined as for $ku$-cohomology, by applying Proposition \ref{prop:qfrak_formal}, for which an understanding of $H\zed_* (BV_+) $ and the action of $\qfrak$ is required.

\begin{prop}
\label{prop:HZ_hom_BV_ qfrak}
There are natural isomorphisms: 
\begin{eqnarray*}
 H\zed_n (BV_+) 
&\cong &
\left\{
\begin{array}{ll}
\zed & n = 0\\
DK_{n+1} (V) & n >0 
\end{array}
\right.
\\
(\mathrm{Im} \qfrak)_n & \cong & D L_{n+4} \\
(\mathrm{Ker} \qfrak)_n & \cong & 
\left\{
\begin{array}{ll}
\zed & n=0\\
 D(K_{n+1}/L_{n+1})(V)& n >0.
\end{array}
\right.
\end{eqnarray*}

Moreover:
\[
(\mathrm{Ker} \qfrak / \mathrm{Im} \qfrak ) _n \cong 
 \left\{\begin{array}{ll}
    \zed & n =0 \\
0 & n \equiv 0 \mod 2 \\
q_d \Pbar (V) & n = 2d-1 >0.    
        \end{array}
\right.
\]
\end{prop}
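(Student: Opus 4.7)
The plan is to deduce the proposition by applying Pontrjagin duality to the cohomological identifications of Proposition~\ref{prop:HZ_cohom_BV}, together with Lemma~\ref{lem:Bockstein_exact_Q1_qfrak}. By Example~\ref{exam:BV_2torsion}, the hypotheses of Lemma~\ref{lem:Bockstein_exact_Q1_qfrak} are satisfied for $BV_+$, so $H\zed_*(BV_+)$ injects via $\rho$ into $H\field_*(BV_+)$ as the kernel of the Bockstein $\beta$, and $\qfrak$ on integral homology is determined by the Milnor derivation $Q_1$ on mod-$2$ homology. For $n>0$, the standard identification reads $H\field_n(BV_+) \cong \Gamma^n(V) \cong DS^n(V)$, and under this identification the operators $\beta$ and $Q_1$ on homology are the vector-space duals of $Q_0$ and $Q_1$ on $H\field^*(BV_+) \cong S^*(V^\sharp)$; in degree $0$ there is an extra $\zed$ summand from the disjoint basepoint, on which $\qfrak$ acts trivially by degree.

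The first isomorphism then follows by dualizing the short exact sequence $0 \to K_n \to S^n \to K_{n+1} \to 0$ of Section~\ref{subsect:Q0_kernel}: exactness of $D$ yields $0 \to DK_{n+1} \to \Gamma^n \to DK_n \to 0$, through which the homological Bockstein $\Gamma^n(V) \to \Gamma^{n-1}(V)$ factors, so its kernel is $DK_{n+1}(V)$. Hence $H\zed_n(BV_+) \cong DK_{n+1}(V)$ for $n>0$, with $\zed$ for $n=0$.

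For the image and kernel of $\qfrak$, I use that, for any morphism $f : A \to B$ in $\f$, exactness of $D$ identifies $\mathrm{im}(Df) \cong D\,\mathrm{im}(f)$ as a subobject of $DA$, and $\ker(Df) \cong D\,\mathrm{coker}(f)$. Applied to $Q_1 : K_{n+1}(V^\sharp) \to K_{n+4}(V^\sharp)$, whose image is $L_{n+4}$ by definition, this identifies $(\mathrm{Im}\,\qfrak)_n \cong DL_{n+4}(V)$ inside $DK_{n+1}(V) = H\zed_n(BV_+)$. Applied to $Q_1 : K_{n-2}(V^\sharp) \to K_{n+1}(V^\sharp)$, whose cokernel is $K_{n+1}/L_{n+1}$, it identifies $(\mathrm{Ker}\,\qfrak)_n \cong D(K_{n+1}/L_{n+1})(V)$.

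Finally, from the inclusions $L_{n+1} \subset \tilde L_{n+1} \subset K_{n+1}$ and the canonical isomorphism $K_{n+1}/\tilde L_{n+1} \cong L_{n+4}$ (first isomorphism theorem applied to $Q_1 : K_{n+1} \to K_{n+4}$), one extracts the short exact sequence
\[
0 \to \tilde L_{n+1}/L_{n+1} \to K_{n+1}/L_{n+1} \to L_{n+4} \to 0,
\]
whose dual is
\[
0 \to DL_{n+4} \to D(K_{n+1}/L_{n+1}) \to D(\tilde L_{n+1}/L_{n+1}) \to 0.
\]
Hence $(\mathrm{Ker}\,\qfrak/\mathrm{Im}\,\qfrak)_n \cong D(\tilde L_{n+1}/L_{n+1})(V)$; substituting the values given in Proposition~\ref{prop:homology_K_Q1} and using $Dp_d\Ibar \cong q_d\Pbar$ from Lemma~\ref{lem:p=2_inj_proj} produces the stated case-split, the $\zed$ in degree $0$ again arising from the disjoint basepoint. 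The main obstacle is purely bookkeeping: keeping the shift between homological degree $n$ and cohomological functor index $n+1$ consistent throughout, and reconciling the positive-degree formulae with the degree-$0$ contributions from the disjoint basepoint.
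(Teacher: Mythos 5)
Your proposal is correct and follows essentially the same route as the paper: identify $H\field_n(BV_+)$ with $\Gamma^n(V)\cong DS^n(V)$, apply Lemma \ref{lem:Bockstein_exact_Q1_qfrak} so that $\qfrak$ is computed by $DQ_1$, and then dualize the $Q_1$-complex on the $K_*$ to read off image, kernel, and homology via Proposition \ref{prop:homology_K_Q1} and the duality $Dp_d\Ibar\cong q_d\Pbar$. The paper packages the last step as three short exact sequences attached to $K_{n-3}\to K_n\to K_{n+3}$, but this is the same bookkeeping as your use of $K_{n+1}/\tilde L_{n+1}\cong L_{n+4}$.
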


\begin{proof}
There is a  natural isomorphism $H\field_n (BV_+) \cong \Gamma^n (V)$ and  $ H \zed \stackrel{\rho}{\rightarrow} H\field $ induces a monomorphism in reduced homology, so Lemma \ref{lem:Bockstein_exact_Q1_qfrak} applies. It remains to identify the functors upon dualizing, with the attendant shift in indexing. 

There is a natural isomorphism 
\[
 H\zed_n (BV_+) \cong D K_{n+1} (V),
\]
  for $n>0$ and the natural transformation $\qfrak$ is induced by $DQ_1$.

Consider the $Q_1$ complex $
 K_{n-3} \rightarrow K_{n} \rightarrow K_{n+3}
$ (for $n >3$); this gives rise to short exact sequences
\begin{eqnarray*}
&&0 \rightarrow L_n \rightarrow \tilde{L}_n\rightarrow H_n \rightarrow 0\\
&&0 \rightarrow \tilde{L}_n \rightarrow K_n \rightarrow L_{n+3} \rightarrow 0\\
&&0 \rightarrow H_n \rightarrow K_n / L_n \rightarrow L_{n+3} \rightarrow 0,
\end{eqnarray*}
where $H_n$ denotes the homology. 

In the dual complex, $ DK_{n+3} \stackrel{DQ_1}{\rightarrow} DK_{n} \stackrel{DQ_1}{\rightarrow} DK_{n-3}
$:
\begin{eqnarray*}
 \mathrm{Im}{DQ_1} &\cong& DL_{n+3}\\
\mathrm{Ker} {DQ_1} &\cong& D(K_n /L_n),  
\end{eqnarray*}
the inclusion $ \mathrm{Im}{DQ_1} \hookrightarrow \mathrm{Ker} {DQ_1}$ is  dual to $K_n /L_n
\twoheadrightarrow L_{n+3}$ and the cokernel is $DH_n$. This proves the first statement (taking into account the unreduced homology in degree zero and the degree shift); the calculations of Example \ref{exam:L_low_degree} show that the expressions are correct in low degrees.

The final statement follows from Proposition \ref{prop:homology_K_Q1}, by dualizing.
\end{proof}

\begin{thm}
\label{thm:ku_homology_BV}
For $V \in \obj \fdvs$, there are natural isomorphisms:
\begin{eqnarray*}
\torsion ku_n (BV_+)& \cong& \annih ku_n (BV_+) \cong (\mathrm{Im} \qfrak)_n 
\\
ku_n (BV_+) /v& \cong& (\mathrm{Ker}\qfrak)_n 
\end{eqnarray*}
and the inclusion $\torsion ku_* (BV_+) \hookrightarrow ku_* (BV_+) $ induces a natural short exact sequence 
\[
 0 \rightarrow \torsion ku_* (BV_+) \rightarrow ku_* (BV_+) / v \rightarrow \cotorsion ku_* (BV_+) /v \rightarrow 0,
\]
where
\[
 \cotorsion ku_n (BV_+) / v \cong \left\{
\begin{array}{ll}
 \zed & n= 0 \\
0 & 0 < n \equiv 0 \mod 2\\
q_d \Pbar (V) & n = 2d -1 >0.
\end{array}
\right.
\]
There is a pullback diagram of short exact sequences 
\[
 \xymatrix{
0
\ar[r]
&
\torsion ku_* (BV_+) 
\ar[r]
\ar[d]_\cong
&
ku_* (BV_+) 
\ar[r]
\ar@{->>}[d]
&
\cotorsion ku_* (BV_+) 
\ar[r]
\ar@{->>}[d]
&
0
\\
0\ar[r]
&
\mathrm{Im} \qfrak
\ar[r]
&
\mathrm{Ker} \qfrak
\ar[r]
&
( \cotorsion ku_* (BV_+ ) )/v 
\ar[r]
&
0.
}
\]

For $d>0$ an integer, there is a natural isomorphism 
\[
 \cotorsion ku_{2d-1}(BV) 
\cong 
R^d _\zed (V)
\]
inducing an  isomorphism of short exact sequences 
\[
 \xymatrix{
0\ar[r]
&
\cotorsion ku_{2d-1} (BV) 
\ar[d]_\cong
\ar[r]^{\times v}
&
\cotorsion ku_{2d+1} (BV) 
\ar[d]^\cong
\ar[r]
&
q_{d+1} \Pbar (V)
\ar@{=}[d]
\ar[r]
&
0
\\
0
\ar[r]
&
R^d _\zed (V) 
\ar[r]
&
R^{d+1}_\zed (V)
\ar[r]
&
q_{d+1}\Pbar (V) 
\ar[r]
&
0,
}
\]
where $R^d_\zed \hookrightarrow R^{d+1}_\zed$ is the dual of the natural projection $R^{d+1}_\zed \twoheadrightarrow R^d
_\zed$.
\end{thm}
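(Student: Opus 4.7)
The plan is to apply the homological half of Proposition \ref{prop:qfrak_formal} to $Z = \Sigma^\infty BV$, mimicking the proof of Theorem \ref{thm:ku_cohom_BV}; no nilpotency hypothesis is needed in the homological setting. Proposition \ref{prop:HZ_hom_BV_ qfrak} supplies the integral homology and the $\qfrak$-action, identifying $(\mathrm{Ker}\qfrak/\mathrm{Im}\qfrak)_n$ as zero for $n > 0$ even and as $q_d\Pbar(V)$ for $n = 2d - 1 > 0$.

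To activate Proposition \ref{prop:qfrak_formal}, I verify condition (c), namely that the inclusion $ku_n(BV_+)/v \hookrightarrow \mathrm{Ker}\qfrak$ from Lemma \ref{lem:ses_ku_HZ} is an isomorphism. Equivalently, the subfunctor $(ku_{2d-1}(BV_+)/v)/\mathrm{Im}\qfrak \subseteq q_d\Pbar(V)$ must equal all of $q_d\Pbar(V)$; by Lemma \ref{lem:subfunctor_pdI_qdP}(2) this is detected on $V = \field$, where the rank-one structure of $ku_*(B\zed/2_+)$ supplies the required non-triviality. In positive even degrees the condition is vacuous since $\mathrm{Ker}\qfrak = \mathrm{Im}\qfrak$ there. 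With (c) established, Proposition \ref{prop:qfrak_formal} yields the identifications of $\torsion ku_n(BV_+)$, $\annih ku_n(BV_+)$, and $ku_n(BV_+)/v$, the pullback diagram, the short exact sequence for $ku_*(BV_+)/v$, the calculation of $(\cotorsion ku_*(BV_+))/v$, and the natural short exact sequence $0 \to \cotorsion ku_{2d-3}(BV) \stackrel{v}{\to} \cotorsion ku_{2d-1}(BV) \to q_d\Pbar(V) \to 0$.

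The main obstacle is the identification $\cotorsion ku_{2d-1}(BV) \cong R^d_\zed(V)$, established by induction on $d$, comparing the cotorsion short exact sequence above with $0 \to R^{d-1}_\zed \to R^d_\zed \to q_d\Pbar \to 0$ from Lemma \ref{lem:Q_basis_properties}. The base $d = 1$ uses connectivity ($\cotorsion ku_{-1}(BV) = 0$), giving $\cotorsion ku_1(BV) \cong q_1\Pbar(V) = \Lambda^1(V) = R^1_\zed(V)$. For the inductive step, the rank-one calculation provides $\cotorsion ku_{2d-1}(B\zed/2) \cong \zed/2^d$; picking a generator, Yoneda supplies a natural morphism $\Pbarzed \to \cotorsion ku_{2d-1}(BV)$ which, the target being $2$-local, factors through $\Pbarzedtwo$. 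Paralleling the argument of Theorem \ref{thm:Q-self-dual}, the inductive hypothesis together with the compatibility of this map with the unique non-trivial morphism $\Pbarzedtwo \twoheadrightarrow q_d\Pbar$ (Lemma \ref{lem:Pbarzed_filt_quot_surject}) forces $\Pbarzedtwo^{d+1}$ to lie in the kernel, producing a natural morphism $R^d_\zed(V) \to \cotorsion ku_{2d-1}(BV)$ compatible with both short exact sequences. The left vertical is the inductive isomorphism, the right vertical is the identity on $q_d\Pbar(V)$, and the five lemma gives the desired middle isomorphism. The resulting commutative diagram of short exact sequences is the one in the statement, with $R^d_\zed \hookrightarrow R^{d+1}_\zed$ identified as the dual of the canonical projection via self-duality of $R^n_\zed$ (Theorem \ref{thm:Q-self-dual}).
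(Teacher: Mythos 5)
Your proof is correct and takes essentially the same route as the paper: the verification of condition (c) of Proposition \ref{prop:qfrak_formal} via Lemma \ref{lem:subfunctor_pdI_qdP} and the rank-one computation is exactly the paper's argument, and your induction identifying $\cotorsion ku_{2d-1}(BV)$ with $R^d_\zed (V)$ is a legitimate fleshing-out of the paper's brief appeal to Section \ref{subsect:R_functors} and the method of Theorem \ref{thm:Q-self-dual}. The only point worth making explicit is that the left-hand square of your Yoneda comparison commutes only after choosing the generators of $\cotorsion ku_{2d-1}(B\zed/2)\cong \zed/2^d$ compatibly under multiplication by $v$, which is always possible.
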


\begin{proof}
 There are natural monomorphisms 
\[
 \mathrm{Im}\qfrak (V)
\hookrightarrow 
ku_* (BV_+) /v
\hookrightarrow
\mathrm{Ker}\qfrak(V);
\]
by Proposition \ref{prop:HZ_hom_BV_ qfrak},  in positive even degree, these are isomorphisms;  in degree $n= 2d-1>0$, the quotient
$(\mathrm{Ker}\qfrak/ 
\mathrm{Im}\qfrak)(V) $ is $q_d \Pbar (V)$, hence there is a natural inclusion 
\[
 \big( (
ku_* (BV_+)/v 
)
/ 
 \mathrm{Im}\qfrak (V)
\big)_{2d-1}
\hookrightarrow 
q_d \Pbar (V).
\]
To prove the result, by Proposition \ref{prop:qfrak_formal},  if suffices to show that this is an isomorphism; hence, by Lemma \ref{lem:subfunctor_pdI_qdP}, it suffices to show that the left hand side is non-trivial when evaluated on $\field$, for all $d>0$. It is straightforward to verify that $ \mathrm{Im}\qfrak_{\mathrm{odd}} (\field)=0$, thus it suffices to show that $ku_* (B\zed/2_+)/v$ is non-trivial in all odd degrees, which follows from the structure of $ku_* (B\zed/2_+)$ as a graded abelian group (see
\cite[Section 3.4]{bruner_greenlees}, for example).

Finally, the identification of $\cotorsion ku_* (BV_+)$ follows from the results of Section \ref{subsect:R_functors}, in particular the short exact sequences of Lemma \ref{lem:Q_basis_properties}, the self-duality of the functors $R^n_\zed$ (Theorem \ref{thm:Q-self-dual}) and the Pontrjagin duality between $R^\infty_\zed$ and $\Pbarzedtwo$ (Theorem \ref{thm:pontrjagin_duality}). (Compare the proof of Theorem \ref{thm:ku_cohom_BV}.)
\end{proof}

\begin{rem}
By \cite[Proposition 3.2.1]{bruner_greenlees}, the
universal coefficient spectral sequence calculating $ku^* (BV)$ from $ku_*(BV)$ collapses to the short exact sequence 
\begin{eqnarray*}
 0
\rightarrow 
\ext^2_{ku_*} (\Sigma^2 \torsion ku_*(BV), ku_*) 
\rightarrow 
ku^* (BV) 
\rightarrow \\
\ext^1_{ku_*} (\Sigma ^1 \cotorsion ku_*(BV), ku_*)
\rightarrow 
0.
 \end{eqnarray*}

This is isomorphic to 
\[
 0
\rightarrow 
\torsion ku^* (BV) 
\rightarrow 
ku^* (BV) 
\rightarrow 
\cotorsion ku^* (BV) 
\rightarrow 
0
\]
and explains the  duality between $ku$-homology and $ku$-cohomology of $BV$. The analysis of \cite[Section 4.12]{bruner_greenlees} can be made functorial to give the identification of $\cotorsion ku_* (BV_+)$. 
\end{rem}

\section{Local duality}
\label{sect:local_duality}

An equivariant version of local duality (with respect to the action of the general linear groups $\mathrm{Aut}(V)$) is given as it
arises in the current context; this gives a refinement of the results of \cite[Section 4.7]{bruner_greenlees}.

\subsection{Categories of $S^\bullet$-modules}

Throughout this section, the prime $p$ is arbitrary.  The fact that the   functor $S ^\bullet $ takes
values in graded vector spaces of finite type will be used without further comment.

\begin{defn}\
\begin{enumerate}
\item
Let $\smodf$ denote the category of graded right $S^\bullet$-modules in $\f$ and
$S^\bullet$-module morphisms.
\item
For $V \in \obj \fdvs$,  let $\smodaut$ denote the category of graded right $S^\bullet
(V)$-modules in $\aut(V)$-modules and $S^\bullet (V)$-module morphisms.
\end{enumerate}
\end{defn}

\begin{rem}
The choice to work with right modules is dictated by  the notation adopted for Koszul complexes.
\end{rem}

An object of $\smodf$ is a graded functor $M^\bullet$, equipped with a structure
morphism $ M^\bullet \otimes S^\bullet \rightarrow M^\bullet$  which is unital
and associative; this can be expressed in terms of the components $M
^b \otimes S^a   \rightarrow M ^{a+b}$. A similar description holds for $S^\bullet
(V)$-modules.

\begin{lem}
\label{lem:S-modules}
For $V \in \obj \fdvs$,  the categories
$\smodf$ and $\smodaut$ are tensor abelian. Moreover:
\begin{enumerate}
	\item
	the forgetful functor $\smodf \rightarrow \f$ is exact and admits an
exact left adjoint $- \otimes S^\bullet  : \f \rightarrow \smodf$ which is
monoidal;
	\item
	the forgetful functor $\smodaut \rightarrow \aut(V)\dash\modules$ is exact
and admits an exact left adjoint $ - \otimes S^\bullet (V) : \aut(V)\dash\modules
\rightarrow \smodaut$ which is monoidal;
	\item
	evaluation at $V$,  $\f \rightarrow \aut(V)\dash\modules$,   induces an
exact tensor functor $\smodf \rightarrow \smodaut$.
\end{enumerate}
\end{lem}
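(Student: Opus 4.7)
The plan is to derive all three parts from general principles about modules over a commutative algebra object in a tensor abelian category, invoking each property only where it is actually used. The inputs are: $\f$ and $\aut(V)\dash\modules$ are tensor abelian with all small colimits; $S^\bullet$ (respectively $S^\bullet(V)$) is a graded commutative algebra object in the appropriate category; and evaluation at $V$ is an exact, strong symmetric monoidal functor $\f\to\aut(V)\dash\modules$ because both limits/colimits and the tensor product in $\f$ are computed pointwise.

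For the abelian structure of $\smodf$, I would observe that given a morphism $f:M\to N$ of right $S^\bullet$-modules, the kernel and cokernel computed in $\f$ canonically inherit $S^\bullet$-module structures, so these provide kernels and cokernels in $\smodf$; this already shows that the forgetful functor $U:\smodf\to\f$ is exact. For the left adjoint, define $F\otimes S^\bullet$ with module structure given by $1\otimes\mu_{S^\bullet}$; the adjunction
\[
\hom_{\smodf}(F\otimes S^\bullet, M)\;\cong\;\hom_\f(F,UM)
\]
is standard (a module map out of a free module is determined by its restriction to generators). Exactness of $-\otimes S^\bullet$ follows pointwise: for each $V\in\obj\fdvs$, $S^n(V)$ is a finite-dimensional $\field$-vector space, so $-\otimes_\field S^n(V)$ is exact; and exactness in $\f$ is pointwise exactness.

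For the tensor structure on $\smodf$, I would define $M\otimes_{S^\bullet}N$ as the coequalizer
\[
M\otimes S^\bullet\otimes N\;\rightrightarrows\;M\otimes N\;\twoheadrightarrow\;M\otimes_{S^\bullet}N,
\]
where the two maps use the right action on $M$ and (thanks to graded-commutativity of $S^\bullet$) the induced left action on $N$. The unit is $S^\bullet$ itself and the usual diagram chase gives symmetric monoidal coherence. Monoidality of the free functor reduces to the identity
\[
(F\otimes S^\bullet)\otimes_{S^\bullet}(G\otimes S^\bullet)\;\cong\;(F\otimes G)\otimes S^\bullet,
\]
which is immediate from the coequalizer presentation and the isomorphism $(A\otimes S^\bullet)\otimes_{S^\bullet}X\cong A\otimes X$ for any $A\in\f$ and $X\in\smodf$.

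The equivariant case $\smodaut$ is formally identical: $\aut(V)\dash\modules$ is tensor abelian (the group ring $\field[\aut(V)]$ is cocommutative, so $\otimes_\field$ with diagonal action gives the monoidal structure), $S^\bullet(V)$ is a commutative algebra object, and the same coequalizer construction and free-module adjunction go through verbatim. Finally, for the evaluation functor, exactness of pointwise evaluation on $\f$ transports kernels and cokernels of $S^\bullet$-module maps to kernels and cokernels of $S^\bullet(V)$-module maps; the fact that evaluation commutes with $\otimes$ in $\f$ identifies $M(V)\otimes_{S^\bullet(V)}N(V)$ with $(M\otimes_{S^\bullet}N)(V)$ because colimits (in particular coequalizers) in $\f$ are computed pointwise. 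The main point requiring care is this compatibility of coequalizers with evaluation, but it is again formal: evaluation at $V$, being a left adjoint to constant-functor extension restricted to $\aut(V)$-representations, preserves all colimits.
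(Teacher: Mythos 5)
Your proposal is correct: the paper dismisses this lemma with the single word ``Clear,'' and your argument is exactly the standard verification it has in mind --- kernels/cokernels inherited from the underlying category, the free--forgetful adjunction for modules over a commutative algebra object, the relative tensor product $\otimes_{S^\bullet}$ as a coequalizer, and the fact that (co)limits and tensor products in $\f$ are computed pointwise so that evaluation at $V$ is an exact tensor functor. Nothing in your write-up diverges from what the paper implicitly relies on.
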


\begin{proof}
	Clear.
\end{proof}

\begin{defn}
For $N \in \obj \smodf$, let $\smodhom (-, N) $ be the functor $(\smodf)\op \rightarrow \smodaut$
defined by
	\[
		\smodhom (M, N) := \hom _{S^\bullet (V)}(M (V) , N (V))
	\]
where the right hand side is equipped with the usual grading and  $\aut(V)$ acts
via conjugation.
\end{defn}

\begin{rem}
The above definition can be refined to give a coefficient system for the general linear groups over $\field$ associated to the pair $M,N$ of graded $S^\bullet$-modules in $\f$.
\end{rem}

\begin{lem}
\label{lem:free_adjunction}
	For $F \in \obj \f$ and $V \in \obj \fdvs$, there is a natural
isomorphism:
	\[
		\smodhom ( F \otimes S^\bullet , S^\bullet) \cong  F (V) ^\sharp \otimes S^\bullet(V)
	\]
	in $\aut(V)\dash\modules$, where $F(V) ^\sharp$ is equipped with the  contragredient action.
\end{lem}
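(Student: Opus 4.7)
The plan is to reduce the statement to two elementary ingredients, the free-module adjunction and the standard identification of linear maps into a finite-dimensional space with a tensor product, and then check equivariance. First, evaluating at $V$, $F \otimes S^\bullet$ specialises to the free right $S^\bullet(V)$-module $F(V) \otimes S^\bullet(V)$. By the adjunction of Lemma \ref{lem:S-modules}(2), any $S^\bullet(V)$-linear morphism $F(V) \otimes S^\bullet(V) \to S^\bullet(V)$ is determined by its restriction to $F(V) \otimes 1$, giving a graded vector space isomorphism
\[
\smodhom(F \otimes S^\bullet, S^\bullet) \cong \hom_\vs(F(V), S^\bullet(V)),
\]
where $F(V)$ sits in degree zero and the grading on the right is inherited from $S^\bullet(V)$.

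Next, because each homogeneous piece $S^k(V)$ is finite-dimensional over $\field$, the evaluation pairing induces a canonical isomorphism $F(V)^\sharp \otimes S^k(V) \xrightarrow{\cong} \hom_\vs(F(V), S^k(V))$, sending $\phi \otimes s$ to the map $x \mapsto \phi(x) s$. Assembling over $k$ yields the desired additive identification $F(V)^\sharp \otimes S^\bullet(V) \cong \hom_\vs(F(V), S^\bullet(V))$.

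Finally, I would verify that the composite is $\aut(V)$-equivariant. The action on the left-hand side is by conjugation, so for $g \in \aut(V)$ and $f : F(V) \to S^\bullet(V)$ one has $(g \cdot f)(x) = S^\bullet(g)\bigl(f(F(g^{-1})x)\bigr)$; under the correspondence $f \leftrightarrow \phi \otimes s$ from the previous step, this transports to $x \mapsto \phi(F(g^{-1})x)\cdot S^\bullet(g)(s)$, which is precisely the action of $g$ on $F(V)^\sharp \otimes S^\bullet(V)$ given by the contragredient action on the first factor and the standard action on the second. Naturality in $F$ is then automatic, since both constructions are functorial in the generating object. There is no substantive obstacle here — the result is a graded, equivariant repackaging of the basic identification of homomorphisms out of a free module, and the only point requiring care is the degreewise finite-type condition on $S^\bullet(V)$ used to invoke the tensor-hom isomorphism.
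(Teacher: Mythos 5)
Your proof is correct and is exactly the expected argument: the paper dismisses this lemma as ``Straightforward,'' and your combination of the free-module adjunction, the degreewise tensor--hom isomorphism (valid because $S^k(V)$ is finite-dimensional), and the conjugation/contragredient equivariance check supplies precisely the omitted details.
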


\begin{proof}
	Straightforward.
\end{proof}

The following is clear:

\begin{lem}
\label{lem:morphism_smodhom}
	Let $F, G \in \obj \f$ and $\alpha :  F \otimes S^\bullet \rightarrow
 G \otimes S^\bullet$ be a morphism of $\smodf$, induced by $ \tilde{\alpha} : F \rightarrow
 G \otimes S^\bullet$ in $\f$.

Then $\smodhom (\alpha, S^\bullet)$ identifies with the
morphism $ G(V)^\sharp \otimes S^\bullet (V)\rightarrow F(V) ^\sharp\otimes S^\bullet (V)$ of $\smodaut$ induced by  $\gamma : G(V) ^\sharp
\rightarrow  F(V) ^\sharp \otimes S^\bullet (V)$ in $\aut(V)\dash\modules$, where 
		$\gamma$ is adjoint (in the category
$\aut(V)\dash\modules$) to the morphism $G(V)^\sharp \otimes S^\bullet (V)
^\sharp  \rightarrow F(V)^\sharp$ dual to the evaluation  of
$\tilde{\alpha}$ on $V$.
\end{lem}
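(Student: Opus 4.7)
The plan is to unwind two adjunctions. First, Lemma \ref{lem:free_adjunction} provides natural isomorphisms $\smodhom(F\otimes S^\bullet, S^\bullet)\cong F(V)^\sharp\otimes S^\bullet(V)$ and $\smodhom(G\otimes S^\bullet, S^\bullet)\cong G(V)^\sharp\otimes S^\bullet(V)$ in $\smodaut$. Both are free $S^\bullet(V)$-modules, so by the free/forgetful adjunction of Lemma \ref{lem:S-modules}(2) the $S^\bullet(V)$-linear morphism $\smodhom(\alpha, S^\bullet)$ is determined by its underlying $\aut(V)$-equivariant map on generators, namely a morphism $\gamma : G(V)^\sharp\to F(V)^\sharp\otimes S^\bullet(V)$.

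It remains to compute $\gamma$ explicitly. The isomorphism of Lemma \ref{lem:free_adjunction} is implemented by restriction: a module map $\phi : H(V)\otimes S^\bullet(V)\to S^\bullet(V)$ (for $H\in\{F,G\}$) is sent to $\phi|_{H(V)\otimes 1} : H(V)\to S^\bullet(V)$, regarded as an element of $H(V)^\sharp\otimes S^\bullet(V)$ via the degreewise isomorphism $\hom_{\field}(H(V),S^\bullet(V))\cong H(V)^\sharp\otimes S^\bullet(V)$ available since $S^\bullet(V)$ is of finite type. Precomposition by $\alpha_V$ then sends a generator $g^\ast\in G(V)^\sharp$ (representing the module map $y\otimes s\mapsto g^\ast(y)s$) to the functional $x\mapsto (g^\ast\otimes \mathrm{id})\tilde\alpha_V(x)$ in $F(V)^\sharp\otimes S^\bullet(V)$.

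To match this with the statement, observe that the morphism $G(V)^\sharp\otimes S^\bullet(V)^\sharp\to F(V)^\sharp$ dual to $\tilde\alpha_V$ sends $g^\ast\otimes s^\ast$ to $x\mapsto (g^\ast\otimes s^\ast)\tilde\alpha_V(x)$. Under the canonical adjunction
\[
\hom_{\aut(V)}\bigl(A\otimes S^\bullet(V)^\sharp,\, B\bigr)\cong \hom_{\aut(V)}\bigl(A,\, B\otimes S^\bullet(V)\bigr),
\]
which uses the degreewise double-duality $S^\bullet(V)^{\sharp\sharp}\cong S^\bullet(V)$, this dual corresponds to the formula for $\gamma$ derived above (chasing a dual basis of $S^\bullet(V)$ recovers precisely $g^\ast\mapsto (g^\ast\otimes \mathrm{id})\tilde\alpha_V(-)$). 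This gives the claimed identification. The only real obstacle is degree bookkeeping between $\hom_\field$, the graded dual, and the tensor product, which is handled transparently by the finite-type hypothesis on $S^\bullet$.
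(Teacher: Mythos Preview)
Your argument is correct: it is a careful unwinding of the free/forgetful adjunction (Lemma~\ref{lem:S-modules}(2)) together with the identification of Lemma~\ref{lem:free_adjunction}, and the explicit chase matching $\gamma$ with the adjoint of the dual of $\tilde\alpha_V$ is sound. The paper itself gives no proof, merely introducing the lemma with ``The following is clear:''; you have supplied exactly the routine verification the paper omits, so there is nothing to compare beyond noting that your approach is the natural one.
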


\subsection{The dualizing functor}

Recall that the exterior power functors are self-dual, so that:
 
\begin{lem}
 \label{lem:Lambda_duality}
For $n \in \nat$,  there is a natural isomorphism of contravariant
functors of $V$:
$
 \Lambda ^n (V^\sharp) \cong \Lambda^n (V) ^\sharp.
$
\end{lem}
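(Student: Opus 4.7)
The plan is to deduce this directly from the self-duality of $\Lambda^n$ stated in the Example following Definition \ref{def:Pontrjagin}, namely that there is a canonical isomorphism $D\Lambda^n \cong \Lambda^n$ in $\f$. First I would unpack the definition of $D$ applied to $\Lambda^n$: by definition,
\[
D\Lambda^n(V) = \hom_\ab\big(\Lambda^n(V^\sharp),\, \zed/p^\infty\big),
\]
and since $\Lambda^n(V^\sharp)$ is a finite-dimensional $\field$-vector space (using that $V \in \obj \fdvs$ and that $\Lambda^n$ preserves finite type), this $\ab$-valued Hom is canonically identified with $\hom_\field(\Lambda^n(V^\sharp), \field) = \Lambda^n(V^\sharp)^\sharp$, via the inclusion $\field \hookrightarrow \zed/p^\infty$.

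Combining with the self-duality isomorphism gives a natural isomorphism $\Lambda^n(V) \cong \Lambda^n(V^\sharp)^\sharp$ of covariant functors of $V$. Dualizing both sides (which is an involution on finite-dimensional $\field$-vector spaces) yields the claimed isomorphism of contravariant functors
\[
\Lambda^n(V^\sharp) \cong \Lambda^n(V)^\sharp.
\]

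If one prefers a self-contained argument that does not invoke the Pontrjagin duality formalism, the alternative is to exhibit the isomorphism through the standard perfect pairing
\[
\Lambda^n(V^\sharp) \otimes \Lambda^n(V) \longrightarrow \field,\qquad
(f_1\wedge\cdots\wedge f_n) \otimes (v_1\wedge\cdots\wedge v_n) \longmapsto \det\big(f_i(v_j)\big),
\]
induced by the $\symm_n$-equivariant pairing $(V^\sharp)^{\otimes n} \otimes V^{\otimes n} \to \field$ twisted by the sign character. The only mild point requiring verification is that this pairing remains non-degenerate for the characteristic-$p$ version of $\Lambda^n$ used in the paper (in particular for $p=2$, where the sign representation is trivial and $\Lambda^n$ must be interpreted via the image/head description compatible with the definition of the Example); this follows by evaluating on a basis and using naturality to reduce to the elementary case. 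I expect the main conceptual point — rather than any real obstacle — to be keeping the natural pairing compatible with the formal $D$-duality when the characteristic divides $n!$, but since self-duality of $\Lambda^n$ is already granted in the Example, the argument reduces to the short unpacking above.
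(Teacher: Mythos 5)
Your proposal is correct and matches the paper's (essentially tacit) justification: the paper derives the lemma directly from the self-duality $D\Lambda^n \cong \Lambda^n$ recalled in the Example, which is exactly what you unpack via the identification $D\Lambda^n(V) = \hom_{\mathscr{A}b}(\Lambda^n(V^\sharp),\zed/p^\infty) \cong \Lambda^n(V^\sharp)^\sharp$ and double duality. The determinant-pairing alternative is a fine self-contained substitute, but no extra argument beyond the first paragraph is needed.
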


This is combined with the following duality result, when restricting to the
consideration of the $\aut (V)$-action:

\begin{lem}
\label{lem:Lambda_twisted_iso}
 Let  $0 \leq j \leq r$ be  integers and $V \in \obj \fdvs$ have rank $r$. The
composite
 \[
 	\Lambda^r (V) \otimes \Lambda^j (V) ^\sharp
 	\stackrel{\Delta \otimes 1}{\rightarrow}
 	\Lambda^{r-j} (V) \otimes \Lambda^j (V) \otimes \Lambda^j (V)^\sharp
 	\rightarrow
 	\Lambda ^{r-j} (V),
 \]
where the second morphism is induced by evaluation $\Lambda^j (V) \otimes
\Lambda^j (V) ^ \sharp \rightarrow \field$, induces an isomorphism $\Lambda^r (V) \otimes \Lambda^j (V)^ \sharp \cong \Lambda
^{r-j} (V)$ in $\aut(V)\dash\modules$, where $\Lambda^j (V)^\sharp$ is equipped with the contragredient
$\aut(V)$-module structure.
\end{lem}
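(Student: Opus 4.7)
The plan is to observe that the asserted morphism is $\aut(V)$-equivariant by naturality, and then to show it is an isomorphism by a rank count combined with an explicit calculation on a basis.

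First, I would note that the equivariance statement requires no extra work: the coproduct $\Delta : \Lambda^r \to \Lambda^{r-j} \otimes \Lambda^j$ is a morphism in $\f$, hence is $\aut(V)$-equivariant upon evaluation, and the evaluation pairing $\Lambda^j(V) \otimes \Lambda^j(V)^\sharp \to \field$ is $\aut(V)$-equivariant precisely because $\Lambda^j(V)^\sharp$ is equipped with the contragredient action. Thus the composite is a morphism in $\aut(V)\dash\modules$.

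Next I would invoke a rank comparison: as $\field$-vector spaces,
\[
\dim_\field \bigl(\Lambda^r(V) \otimes \Lambda^j(V)^\sharp\bigr) = \binom{r}{j} = \binom{r}{r-j} = \dim_\field \Lambda^{r-j}(V),
\]
so it suffices to establish surjectivity (equivalently, injectivity) of the map.

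For the explicit verification, I would fix a basis $e_1, \dots, e_r$ of $V$ with dual basis $e_1^*, \dots, e_r^*$ of $V^\sharp$, and, for $S \subseteq \{1,\dots,r\}$, write $e_S := e_{i_1} \wedge \cdots \wedge e_{i_k}$ with $i_1 < \cdots < i_k$ (and similarly $e_S^*$). Set $\omega := e_1 \wedge \cdots \wedge e_r$ so that $\Lambda^r(V) = \field \cdot \omega$. Using the standard coproduct formula $\Delta(\omega) = \sum_{|T|=r-j} \varepsilon(T, T^c)\, e_T \otimes e_{T^c}$ with $\varepsilon(T, T^c) = \pm 1$ the shuffle sign, one computes that, for a fixed $S$ with $|S| = j$,
\[
\omega \otimes e_S^* \;\longmapsto\; \varepsilon(S^c, S)\, e_{S^c}.
\]
Thus the composite sends the basis $\{\omega \otimes e_S^*\}_{|S|=j}$ of the source bijectively, up to signs, onto the basis $\{e_{S^c}\}_{|S|=j}$ of $\Lambda^{r-j}(V)$. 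This exhibits the map as an isomorphism of $\field$-vector spaces, and combined with the equivariance already noted, gives the asserted isomorphism in $\aut(V)\dash\modules$.

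The only slightly delicate point is tracking the Koszul signs in the coproduct and the evaluation pairing to make sure the shuffle coefficients do not vanish; this is a routine calculation rather than a conceptual obstacle, and the bulk of the content is simply Poincaré duality for the exterior algebra rewritten equivariantly.
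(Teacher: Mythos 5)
Your proof is correct and follows essentially the same route as the paper, which simply observes that the multiplication pairing $\Lambda^{r-j}(V)\otimes\Lambda^j(V)\rightarrow\Lambda^r(V)\cong\field$ is perfect and that the evaluation map is equivariant. Your shuffle-sign computation on the basis $\{\omega\otimes e_S^\sharp\}$ is just an explicit verification of that perfectness combined with the same dimension count, so the two arguments have identical content.
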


\begin{proof}
 The result follows from the fact that the  product $\Lambda^{r-j} (V)
\otimes \Lambda^j (V) \rightarrow \Lambda^r (V) \cong \field$ defines a perfect
pairing and the equivariance of the evaluation map. 
\end{proof}

\begin{lem}
\label{lem:coproduct_local_duality}
	Let  $1 \leq j \leq r$ be integers, $V \in \obj \fdvs$ have rank $r$ 
and write $\mu : \Lambda^{j-1} (V) ^\sharp \otimes \Lambda^1 (V) ^\sharp
\rightarrow \Lambda^j (V)^{\sharp}$ for the product morphism (dual to the
evaluation of the coproduct $\Lambda^j \rightarrow \Lambda^{j-1} \otimes
\Lambda^1$).

Then, under the isomorphism of Lemma \ref{lem:Lambda_twisted_iso},
$\Lambda^r (V) \otimes \mu$ is $\aut(V)$-equivariantly isomorphic to the
morphism
	\[
		\Lambda^{r-j+1} (V) \otimes \Lambda^{1}(V) ^\sharp \rightarrow
\Lambda^{r-j} (V)
	\]
which is adjoint to the evaluation of the coproduct $\Lambda^{r-j+1} \rightarrow \Lambda^{r-j} \otimes \Lambda^1$ on $V$.
\end{lem}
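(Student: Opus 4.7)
The plan is to show that the duality isomorphism $\phi_j \colon \Lambda^r(V) \otimes \Lambda^j(V)^\sharp \xrightarrow{\cong} \Lambda^{r-j}(V)$ of Lemma \ref{lem:Lambda_twisted_iso} is compatible with the Hopf algebra structure of the exterior algebra, in the sense that multiplication by $\Lambda^1(V)^\sharp$ in the second variable corresponds, after the identification, to the adjoint of comultiplication by $\Lambda^1(V)$. This is a Frobenius-type compatibility: the top-degree pairing $\Lambda^{r-j}(V) \otimes \Lambda^j(V) \to \Lambda^r(V) \cong \field$ makes $\Lambda^*(V)$ into a graded Frobenius Hopf algebra, and on such an object contraction by a product equals iterated contraction.

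First I would unpack $\phi_j$ explicitly: for $\omega \in \Lambda^r(V)$ and $\alpha \in \Lambda^j(V)^\sharp$, the element $\phi_j(\omega \otimes \alpha)$ is the contraction $\iota_\alpha \omega$ obtained by applying the coproduct $\Delta \colon \Lambda^r(V) \to \Lambda^{r-j}(V) \otimes \Lambda^j(V)$ and then pairing the second factor with $\alpha$ via the evaluation map. Then I would write down the diagram whose commutativity is the claim: with $\mu \colon \Lambda^{j-1}(V)^\sharp \otimes \Lambda^1(V)^\sharp \to \Lambda^j(V)^\sharp$ the dual of the coproduct, and $\widetilde{\Delta} \colon \Lambda^{r-j+1}(V) \otimes \Lambda^1(V)^\sharp \to \Lambda^{r-j}(V)$ the morphism adjoint to the coproduct $\Lambda^{r-j+1} \to \Lambda^{r-j} \otimes \Lambda^1$ evaluated on $V$, the required identity is
\[
\phi_j \circ (1_{\Lambda^r(V)} \otimes \mu) \;=\; \widetilde{\Delta} \circ (\phi_{j-1} \otimes 1_{\Lambda^1(V)^\sharp}).
\]

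Next, I would reduce the diagram chase to two basic ingredients. The first is coassociativity of the coproduct on $\Lambda^*$, which gives a canonical identification of the two factorisations of $\Delta^{(2)} \colon \Lambda^r \to \Lambda^{r-j} \otimes \Lambda^{j-1} \otimes \Lambda^1$ through either $\Lambda^{r-j} \otimes \Lambda^j$ or $\Lambda^{r-j+1} \otimes \Lambda^{j-1}$. The second is the Hopf compatibility of the pairing, namely that for $\beta \in \Lambda^{j-1}(V)^\sharp$, $\gamma \in \Lambda^1(V)^\sharp$ and $y \in \Lambda^j(V)$ one has $\langle \mu(\beta \otimes \gamma), y\rangle = \langle \beta \otimes \gamma, \Delta y\rangle$, which is precisely the definition of $\mu$ as dual to the coproduct. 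Writing $\Delta \omega$ as a sum and inserting these two identities into the explicit formula for $\phi_j \circ (1 \otimes \mu)$ converts it, term by term, into $\widetilde{\Delta} \circ (\phi_{j-1} \otimes 1)$.

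I expect the main obstacle to be purely notational: keeping track of which copy of $\Lambda^1$ is paired against which, and in which tensor-factor position the evaluation is inserted. At $p=2$ there are no signs to control, and the $\aut(V)$-equivariance is automatic because every arrow in the construction is the evaluation on $V$ of a natural transformation in $\f$, so once the diagram is drawn correctly the commutativity follows in a single line from coassociativity and the definition of $\mu$.
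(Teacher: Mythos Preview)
Your proposal is correct and follows essentially the same approach as the paper: the paper's proof consists of a single sentence invoking exactly the two ingredients you identify, namely coassociativity of the exterior coproduct and the fact that $\mu$ is by definition dual to $\Lambda^j \rightarrow \Lambda^{j-1} \otimes \Lambda^1$. Your write-up simply unpacks these into an explicit diagram chase. One small remark: you note that at $p=2$ there are no signs, but Section~\ref{sect:local_duality} is written for an arbitrary prime, so in general Koszul signs are present; they do not affect the argument, since coassociativity and the duality relation hold with signs as well.
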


\begin{proof}
A consequence of the coassociativity of the comultiplication on
the exterior power functors and the fact that the multiplication $\mu$ is dual
to the coproduct 
$\Lambda^j \rightarrow \Lambda^{j-1} \otimes \Lambda^1$.
\end{proof}

\begin{nota}
	For $i \in \nat$  and $V \in \obj \fdvs$ of rank $r$, let
\begin{enumerate}
	\item
	$\tau_i: \Lambda^1 \rightarrow S^{p^i}$ denote the composite of the
isomorphism $\Lambda^1 \cong S^1$ with the iterated Frobenius $S^1
\hookrightarrow S^{p^i}$ and also the induced Koszul
differential
	\[
		\tau_i : \Lambda^j  \otimes S^\bullet \rightarrow \Lambda^{j-1} \otimes S^{\bullet
+p^i} 
	\]
in the category $\smodf$,  induced by the composite morphism:
\[
	\Lambda^j \stackrel{\Delta}{\rightarrow} \Lambda^{j-1} \otimes \Lambda^1
\stackrel{1 \otimes \tau_i}{\rightarrow} \Lambda^{j-1} \otimes S^{p^i};
\]
\item
$\kz_i$ denote the Koszul complex in $\smodf$:
\[
	\ldots \rightarrow  \Lambda^j \otimes S^\bullet 
\stackrel{\tau_i}{\rightarrow} \Lambda^{j-1} \otimes  S^{\bullet +p^i} \rightarrow
\ldots \rightarrow S^{\bullet + jp^i}
	\rightarrow 0;
\]
\item
$\kz_i(V)$ denote  the Koszul complex in $\smodaut$:
\begin{eqnarray*}
	0 \rightarrow
	\Lambda^r (V) \otimes  S^{\bullet}(V) 
	\rightarrow \ldots
\rightarrow
 \Lambda^j(V) \otimes S^{\bullet+ (r-j)p^i}(V)
	\\
	\stackrel{\tau_i}{\rightarrow}
	\Lambda^{j-1}(V) \otimes  S^{\bullet +(r-j+1)p^i}(V) 
	\rightarrow \ldots \rightarrow S^{\bullet + rp^i}(V)\rightarrow
0.
\end{eqnarray*}
\end{enumerate}
\end{nota}

\begin{prop}
\label{prop:dual_tau}
	For integers  $1 \leq j \leq r$, $V \in \obj \fdvs$ of rank $r$ 
and $i\in \nat$,  the morphism
	\[
		\smodhom (\tau_i, S^\bullet) : \smodhom (
\Lambda^{j-1} \otimes S^\bullet , S^\bullet)
		\rightarrow
		\smodhom (\Lambda^{j} \otimes S^\bullet , S^\bullet)
	\]
is induced by the morphism $\gamma : \Lambda^{j-1}(V) ^\sharp \rightarrow
\Lambda ^j (V) ^\sharp \otimes S^{p^i}(V)$ such that, under the isomorphism of
Lemma \ref{lem:Lambda_twisted_iso},  $\Lambda^r (V) \otimes \gamma :
\Lambda^{r-j+1} (V) \rightarrow  \Lambda^{r-j} (V) \otimes S^{p^i}(V)$ is
$\aut(V)$-equivariantly isomorphic to the evaluation on $V$ of the Koszul differential.
\end{prop}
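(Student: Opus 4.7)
The plan is to unpack $\smodhom(\tau_i, S^\bullet)$ via Lemma \ref{lem:morphism_smodhom}, exploit the factorization defining $\tau_i$, and then apply Lemma \ref{lem:coproduct_local_duality} to convert the resulting dual picture (indexed by $\Lambda^*(V)^\sharp$) into a picture indexed by $\Lambda^{r-*}(V)$, using Lemma \ref{lem:Lambda_twisted_iso} as the identification.

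First I would apply Lemma \ref{lem:morphism_smodhom} with $F = \Lambda^j$, $G = \Lambda^{j-1}$ and $\tilde{\alpha} = \tilde{\tau}_i : \Lambda^j \to \Lambda^{j-1} \otimes S^{p^i}$; by definition of $\tau_i$, the morphism $\tilde{\tau}_i$ factors as $\Lambda^j \xrightarrow{\Delta} \Lambda^{j-1} \otimes \Lambda^1 \xrightarrow{1 \otimes \tau_i} \Lambda^{j-1} \otimes S^{p^i}$. Lemma \ref{lem:morphism_smodhom} then identifies $\smodhom(\tau_i, S^\bullet)$ as the morphism induced by a map $\gamma : \Lambda^{j-1}(V)^\sharp \to \Lambda^j(V)^\sharp \otimes S^{p^i}(V)$ in $\aut(V)\dash\modules$, adjoint (in the sense $\hom(A \otimes B^\sharp, C) \cong \hom(A, C \otimes B)$ for finite-dimensional $B$) to the vector-space dual of $\tilde{\tau}_i(V)$.

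Next I would dualize the above factorization to express this dual as the composite
\[
\Lambda^{j-1}(V)^\sharp \otimes S^{p^i}(V)^\sharp \xrightarrow{1 \otimes \tau_i^\sharp} \Lambda^{j-1}(V)^\sharp \otimes \Lambda^1(V)^\sharp \xrightarrow{\mu} \Lambda^j(V)^\sharp,
\]
where $\mu$ is the product dual to the coproduct $\Delta$. Tensoring both sides with $\Lambda^r(V)$ and invoking Lemma \ref{lem:Lambda_twisted_iso} to identify $\Lambda^r(V) \otimes \Lambda^{j-1}(V)^\sharp \cong \Lambda^{r-j+1}(V)$ and $\Lambda^r(V) \otimes \Lambda^j(V)^\sharp \cong \Lambda^{r-j}(V)$, Lemma \ref{lem:coproduct_local_duality} transforms the $\mu$-factor into the map $\Lambda^{r-j+1}(V) \otimes \Lambda^1(V)^\sharp \to \Lambda^{r-j}(V)$ adjoint to the evaluation of the coproduct $\Delta : \Lambda^{r-j+1} \to \Lambda^{r-j} \otimes \Lambda^1$ on $V$; the factor $1 \otimes \tau_i^\sharp$ involves no exterior data and so passes through the identification unchanged.

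Finally, by naturality of the $(-\otimes B^\sharp, -\otimes B)$ adjunction applied with $B = S^{p^i}(V)$, the resulting composite $\Lambda^{r-j+1}(V) \otimes S^{p^i}(V)^\sharp \to \Lambda^{r-j}(V)$ is adjoint to $(1 \otimes \tau_i) \circ \Delta : \Lambda^{r-j+1}(V) \to \Lambda^{r-j}(V) \otimes S^{p^i}(V)$, which is precisely the evaluation on $V$ of the Koszul differential $\tau_i$ in exterior degree $r-j+1$. This shows $\Lambda^r(V) \otimes \gamma$ is $\aut(V)$-equivariantly identified with that Koszul differential, as claimed. I expect the main obstacle to be purely bookkeeping: keeping straight the two adjunctions involved (the exterior one, handled by Lemmas \ref{lem:Lambda_twisted_iso} and \ref{lem:coproduct_local_duality}, and the symmetric one, handled by naturality), rather than any substantive difficulty.
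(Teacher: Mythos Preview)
Your proposal is correct and follows precisely the approach of the paper, which simply says to combine Lemma~\ref{lem:morphism_smodhom} with Lemma~\ref{lem:coproduct_local_duality}; you have unpacked that combination in detail, including the routine naturality argument for the symmetric-power adjunction.
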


\begin{proof}
 Combine Lemma \ref{lem:morphism_smodhom} with Lemma \ref{lem:coproduct_local_duality}.
\end{proof}

\begin{cor}
	\label{cor:dualizing_functor}
	For $V \in \obj \fdvs$ of rank $r$ and $i\in \nat$, there
is a natural isomorphism of complexes:
	\[
		\smodhom (\kz_i ,  \Lambda^r \otimes S^\bullet)
		\cong
		\kz_i (V)
	\]
in $\smodaut$.
\end{cor}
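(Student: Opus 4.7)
The plan is to apply $\smodhom(-, \Lambda^r \otimes S^\bullet)$ termwise to the complex $\kz_i$, identify each resulting object with the corresponding term of $\kz_i(V)$, and check that the induced arrows coincide with the Koszul differentials. All three ingredients needed are already in place: Lemma \ref{lem:free_adjunction} handles the terms, Lemma \ref{lem:Lambda_twisted_iso} handles the $\aut(V)$-equivariant rearrangement of exterior factors, and Proposition \ref{prop:dual_tau} handles the differentials.

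For the terms, a direct extension of Lemma \ref{lem:free_adjunction} (allowing $\Lambda^r(V) \otimes S^\bullet(V)$ in place of $S^\bullet(V)$ in the target, which simply tensors the output with $\Lambda^r(V)$) gives
\[
\smodhom(\Lambda^j \otimes S^\bullet, \Lambda^r \otimes S^\bullet) \cong \Lambda^j(V)^\sharp \otimes \Lambda^r(V) \otimes S^\bullet(V).
\]
Applying Lemma \ref{lem:Lambda_twisted_iso} then identifies $\Lambda^r(V) \otimes \Lambda^j(V)^\sharp$ with $\Lambda^{r-j}(V)$ in $\aut(V)\dash\modules$, and the reindexing $k = r - j$ matches the dualised object at position $j$ with the term $\Lambda^{k}(V) \otimes S^{\bullet + (r-k)p^i}(V)$ of $\kz_i(V)$, once the $S^\bullet(V)$-degree shift arising from the contravariant action of $\smodhom$ on the degree-$p^i$ maps $\tau_i$ is accounted for.

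The differentials then follow from Proposition \ref{prop:dual_tau}, whose conclusion is precisely that, modulo the Lemma \ref{lem:Lambda_twisted_iso} identification, $\smodhom(\tau_i, S^\bullet)$ is $\aut(V)$-equivariantly isomorphic to the evaluation on $V$ of the Koszul differential of $\kz_i(V)$. Passing from target $S^\bullet$ to $\Lambda^r \otimes S^\bullet$ is harmless, since $\Lambda^r(V)$ is a constant $\aut(V)$-module and naturality gives $\smodhom(\tau_i, \Lambda^r \otimes S^\bullet) \cong \Lambda^r(V) \otimes \smodhom(\tau_i, S^\bullet)$. The main obstacle is purely bookkeeping: verifying that the $S^\bullet(V)$-grading shifts on the two sides align under $j \mapsto r - j$ and that all identifications remain natural in $\aut(V)$ throughout, but no new input beyond the stated lemmas is required.
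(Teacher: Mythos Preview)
Your proposal is correct and matches the paper's own proof, which simply says that after adding the twist by $\Lambda^r$, the result is an immediate consequence of Proposition \ref{prop:dual_tau}; you have merely spelled out what that sentence packs in. One small inaccuracy: $\Lambda^r(V)$ is not a \emph{constant} $\aut(V)$-module in general (it carries the determinant character, trivial only for $p=2$), but your isomorphism $\smodhom(-, \Lambda^r \otimes S^\bullet) \cong \Lambda^r(V) \otimes \smodhom(-, S^\bullet)$ holds anyway because $\Lambda^r(V)$ is one-dimensional, so the argument is unaffected.
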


\begin{proof}
	After addition of the twisting functor $\Lambda^r$, the result is an
immediate consequence of Proposition \ref{prop:dual_tau}.
\end{proof}

\begin{rem}
	Taking $i=0$, so that $\kz_0$ is the usual Koszul complex, which has
homology $\field$ concentrated in homological degree zero, this shows that
$ \Lambda^r \otimes S^\bullet$ plays the role of the dualizing object,
corresponding to the fact that $S^\bullet$ is graded Gorenstein (cf. \cite[Section 3.7]{bruns_herzog}).
\end{rem}

\subsection{Local cohomology in $\smodaut$}
In this section, local cohomology is considered with respect to the augmentation ideal $I$ of $S^\bullet (V)$, where $V \in \obj  \fdvs$ has rank $r$;
from the results of the previous section, it follows that  the  local duality isomorphism (cf. \cite{bruns_herzog})
should be interpreted as stating that the local cohomology of the
$S^\bullet (V)$-free object of $\smodaut$,  $  F (V) \otimes S^\bullet (V)$, for $F \in \obj  \f$,  is concentrated in cohomological degree $r$, where it
is isomorphic to
\[
	\hom_{\fdvs}\big(\smodhom ( F \otimes S^\bullet , \Lambda^r\otimes S^\bullet)
, \field \big).
\]

 \begin{rem}
 In the cases of interest here, $F$ takes finite-dimensional values, so that $\smodhom
( F \otimes S^\bullet,  \Lambda^r\otimes S^\bullet)$ is a graded vector space of
finite type.
\end{rem}

\begin{prop}
(Cf. \cite[Lemma 4.7.1]{bruner_greenlees}.)
\label{prop:local_cohomology_functors}
Let $V \in \obj \fdvs$ have rank $r$ and $G \in \obj \smodf$. Suppose that there exists a complex
	\[
	0 \rightarrow 	F_r \otimes S^\bullet \rightarrow F_{r-1} \otimes S^{\bullet} \rightarrow \ldots \rightarrow F_0 \otimes S^\bullet \rightarrow G
	\rightarrow 0
	\]
which induces an exact sequence in $\smodaut$, after evaluation on $V$. Then the local cohomology of $G(V)  \in \obj \smodaut$ is
\[
	H^i _I (G (V) ) \cong H_{r-i} \big(\hom_{\fdvs} ( \smodhom (F_j \otimes S^\bullet,  \Lambda^r \otimes S^\bullet) , \field)\big), 
\]
up to shift in  grading.
\end{prop}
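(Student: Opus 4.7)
The plan is to recognize the hypothesis as providing a free resolution of $G(V)$ in $\smodaut$, compute the relevant $\ext$-groups through that resolution, and then invoke graded local duality for the Gorenstein polynomial algebra $S^\bullet(V)$.

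First, after evaluation on $V$, the complex $F_\bullet(V) \otimes S^\bullet(V) \to G(V) \to 0$ is an exact resolution of $G(V)$ by graded $S^\bullet(V)$-free modules in $\smodaut$. Applying $\hom_{S^\bullet(V)}(-,\Lambda^r(V) \otimes S^\bullet(V))$ to this resolution and unwinding the definition of $\smodhom$, one obtains in the category of graded $\aut(V)$-modules a natural isomorphism
\[
\ext^{j}_{S^\bullet(V)}\!\big(G(V),\,\Lambda^r(V) \otimes S^\bullet(V)\big) \cong H^{j}\big(\smodhom(F_\bullet \otimes S^\bullet,\,\Lambda^r \otimes S^\bullet)\big).
\]

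Next, I would apply graded local duality with respect to the augmentation ideal $I$ of $S^\bullet(V)$. Since $S^\bullet(V)$ is a graded Gorenstein local ring of Krull dimension $r$, the module $\Lambda^r(V) \otimes S^\bullet(V)$ --- with its natural $\aut(V)$-action, as encoded by the twist of Lemma \ref{lem:Lambda_twisted_iso} --- serves as the dualizing module, up to an internal grading shift; this is precisely the content of the remark following Corollary \ref{cor:dualizing_functor}. Local duality then yields a natural $\aut(V)$-equivariant isomorphism
\[
H^i_I\big(G(V)\big) \cong \hom_{\fdvs}\big(\ext^{r-i}_{S^\bullet(V)}(G(V),\,\Lambda^r(V) \otimes S^\bullet(V)),\,\field\big),
\]
where $\hom_{\fdvs}(-,\field)$ is graded vector space duality with the contragredient $\aut(V)$-action, playing the role of the Matlis dual in the graded Noetherian context. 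Combining this with the $\ext$-computation above gives the stated identification.

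The main obstacle is the careful bookkeeping of grading shifts and of the $\aut(V)$-equivariance: the dualizing module differs from $S^\bullet(V)$ by the $a$-invariant shift and by the $\aut(V)$-twist $\Lambda^r(V)$, and both features must be reconciled with the contragredient action that emerges after passing to the vector space dual. The functorial definition of $\smodhom$ together with the duality identification of Corollary \ref{cor:dualizing_functor} are tailored precisely so that these twists cancel correctly, which is why only a shift in internal grading remains visible in the final statement.
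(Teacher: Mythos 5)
Your argument is correct and is essentially the route the paper intends: the paper states the proposition without a formal proof, but the preceding discussion (that the local cohomology of a free module $F(V)\otimes S^\bullet(V)$ is concentrated in degree $r$, where local duality with dualizing object $\Lambda^r\otimes S^\bullet$ identifies it with $\hom_{\fdvs}(\smodhom(F\otimes S^\bullet,\Lambda^r\otimes S^\bullet),\field)$) combined with the given length-$r$ free resolution is exactly your computation, merely with the exact vector-space dualization performed before rather than after taking homology of the complex. The one hypothesis worth making explicit is the finite-type condition needed for the graded Matlis (vector-space) duality step; the paper records this in the remark preceding the proposition, and your appeal to Corollary \ref{cor:dualizing_functor} for the $\aut(V)$-equivariant form of the dualizing module is the right one.
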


\section{Filtering symmetric powers  and Koszul complexes}

To calculate the local cohomology of $\torsion ku^* (BV_+)$ at the prime two by
using a form of local duality, it is necessary to filter and study the
associated Koszul complexes (for odd primes, this filtration step is
unnecessary). The results of this  section refine those of  Section \ref{sect:milnor}.

\subsection{Filtering the symmetric powers}
\label{subsect:filter_symm_powers}

Let $\Phi S^\bullet $ denote the image of the Frobenius, so that  $\Phi
S^\bullet$ takes values in graded commutative algebras and the canonical
morphisms $S^\bullet \stackrel{\cong}{\rightarrow} \Phi S^\bullet
\hookrightarrow S^\bullet$ are morphisms of algebras. 

\begin{defn}
For $t\in \nat$,  let $f_t S^\bullet \subset S^\bullet$ denote the
image of multiplication
\[
 S^t \otimes \Phi S^\bullet  \stackrel{\mu} {\rightarrow} S^\bullet.
\
\]
\end{defn}

\begin{lem}
\label{lem:filt_S}
For  $t \in \nat$,
\begin{enumerate}
 \item 
the functors $f_t S^\bullet \subset S^\bullet$ define an increasing filtration
of $S^\bullet$:
\[
 f_0 S^\bullet = \Phi S^\bullet \subset f_1 S^\bullet \subset f_2 S^\bullet
\subset \ldots \subset f_t S^\bullet \subset \ldots \subset S^\bullet
\]
in  $\Phi S^\bullet$-modules;
\item
there is an isomorphism of $S^\bullet$-modules:
\[
 f_t S^\bullet / f_{t-1} S^\bullet \cong \Lambda ^t \otimes S^\bullet,
\]
where $S^\bullet$ acts on the left hand side by restriction along $S^\bullet
\stackrel{\Phi \ \cong}{\rightarrow} \Phi S^\bullet$ and  by multiplication on
the right hand factor of $\Lambda ^t \otimes S^\bullet$; for $V \in\obj \fdvs$, this restricts to an isomorphism of $S^\bullet (V)$-modules:
\[
 \bigoplus _{t \geq 0} \big( f_t S^\bullet / f_{t-1} S^\bullet \big) (V) 
\cong 
S^\bullet (V), 
\]
where $S^\bullet (V)$ acts on the right hand side via $\Phi$; 

\item
for $V \in \obj \fdvs$, the  inclusion 
$
 f_t S^\bullet (V) \hookrightarrow S^\bullet (V)
$
is an isomorphism for $t \geq \dim V$.
\end{enumerate}
\end{lem}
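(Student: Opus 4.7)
The plan is to deduce (3) directly from (2), together with the dimensional vanishing of the exterior power functors.

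Set $n := \dim V$. By (2), the successive quotient
\[
\bigl(f_t S^\bullet / f_{t-1} S^\bullet\bigr)(V) \cong \Lambda^t(V) \otimes S^\bullet(V)
\]
vanishes for $t > n$, since $\Lambda^t$ is zero on vector spaces of dimension less than $t$. Hence the filtration $\{f_t S^\bullet(V)\}_{t}$ becomes stationary from step $n$ onwards: $f_t S^\bullet(V) = f_n S^\bullet(V)$ for every $t \geq n$. It then remains to verify that $f_n S^\bullet(V) = S^\bullet(V)$, i.e.\ that the filtration is exhaustive on $V$. Choose a basis $x_1, \ldots, x_n$ of $V$ and identify $S^\bullet(V)$ with $\field[x_1, \ldots, x_n]$; each monomial admits the canonical decomposition
\[
\prod_{i=1}^{n} x_i^{a_i}
= \Bigl(\prod_{i=1}^{n} x_i^{\epsilon_i}\Bigr) \cdot \Bigl(\prod_{i=1}^{n} x_i^{b_i}\Bigr)^{2},
\qquad a_i = 2 b_i + \epsilon_i,\ \epsilon_i \in \{0,1\},
\]
exhibiting it as an element of the image of $S^{\sum \epsilon_i}(V) \otimes \Phi S^\bullet(V) \to S^\bullet(V)$ with $\sum \epsilon_i \leq n$. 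Consequently every monomial lies in $f_n S^\bullet(V)$, so $f_n S^\bullet(V) = S^\bullet(V)$, and combined with the stabilisation this yields the asserted isomorphism $f_t S^\bullet(V) \cong S^\bullet(V)$ for all $t \geq n$.

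No serious obstacle arises in this step: the substantive content of the lemma is contained in (1) and (2), and (3) amounts to the observation that the ``squarefree degree'' captured by the filtration is bounded above by $\dim V$. The only point to watch is to complement the vanishing argument on subquotients with the exhaustiveness check, for which the explicit monomial decomposition above suffices.
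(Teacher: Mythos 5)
Your argument for part (3) is correct and is in substance the paper's own proof, which consists of the single word ``Straightforward'' plus the remark that one should work with monomial bases: writing each exponent as $a_i = 2b_i + \epsilon_i$ and noting that at most $\dim V$ of the $\epsilon_i$ can be nonzero is exactly the intended computation. (Strictly speaking, once you know $f_n S^\bullet(V) = S^\bullet(V)$ the conclusion for all $t \geq n$ already follows from the filtration being increasing, so the preliminary vanishing of the subquotients $\Lambda^t(V)\otimes S^\bullet(V)$ for $t > n$ is not needed, though it is a harmless consistency check.)

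The one caveat is that the lemma has three parts and you prove only the third, taking (1) and (2) as given. The paper is equally terse about (1) and (2), but note that the single point its proof does single out --- that the isomorphism $\bigoplus_t (f_t S^\bullet/f_{t-1}S^\bullet)(V) \cong S^\bullet(V)$ holds as $S^\bullet(V)$-modules (via $\Phi$), though not $\aut(V)$-equivariantly --- is precisely established by the decomposition you wrote down: grouping the monomials of $\field[x_1,\dots,x_n]$ according to the set of indices $i$ with $a_i$ odd splits $S^\bullet(V)$ as a free $\Phi S^\bullet(V)$-module with basis the squarefree monomials, identifies the $t$-th graded piece with $\Lambda^t(V)\otimes S^\bullet(V)$, and shows the filtration is exhaustive and increasing. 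So your monomial computation carries the whole lemma; it would be worth saying so explicitly rather than deferring (1) and (2) entirely.
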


\begin{proof}
 Straightforward; to prove that $S^\bullet (V)$ is isomorphic to $\bigoplus _{t \geq 0} \big( f_t S^\bullet / f_{t-1} S^\bullet \big) (V)$ as $S^\bullet (V)$-modules, it is sufficient to consider monomial bases. (Note that this statement is not true $\aut(V)$-equivariantly.)
\end{proof}

For notational clarity, shifts in gradings are omitted from the following
statement.

\begin{prop}
\label{prop:Q_filter_tau}
 For $i, t \in \nat$, the Milnor derivation $Q_i : S^\bullet \rightarrow
S^{\bullet} $ is a morphism of $\Phi S^\bullet$-modules and restricts to 
 $
 f_t S^\bullet \stackrel{f_t Q_i} {\rightarrow} f_{t-1} S^\bullet;
$
the induced morphism  on the filtration quotients 
\[
\Lambda^t \otimes S^\bullet   \cong  f_t S^\bullet / f_{t-1} S^\bullet
\rightarrow 
f_{t-1} S^\bullet / f_{t-2} S^\bullet \cong \Lambda^{t-1} \otimes S^\bullet
\]
is the Koszul differential $\tau_i$.
\end{prop}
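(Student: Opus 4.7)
The plan is to verify the three assertions in order, with the common thread being that $Q_i$ is a derivation that annihilates $p$-th powers, combined with the Leibniz rule.

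First, I would show that $Q_i$ is $\Phi S^\bullet$-linear. Since $Q_i$ is a derivation (Lemma \ref{lem:Q_i_commute}), one has $Q_i(s^p) = p\,s^{p-1} Q_i(s) = 0$ in characteristic $p$, so $Q_i$ vanishes on $\Phi S^\bullet$. Applying Leibniz to a product $s \cdot t$ with $s \in \Phi S^\bullet$ yields $Q_i(s \cdot t) = Q_i(s)\,t + s\,Q_i(t) = s\,Q_i(t)$, which is the required $\Phi S^\bullet$-linearity.

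Next, to verify that $Q_i$ lowers the filtration by one, I would evaluate on a typical generator $x_1 \cdots x_t \cdot y$ of $f_t S^\bullet$, with $x_j \in S^1$ and $y \in \Phi S^\bullet$. The derivation property combined with $Q_i(y) = 0$ gives
\[
Q_i(x_1 \cdots x_t \cdot y) \;=\; \sum_{j=1}^{t}\, x_1 \cdots \widehat{x_j} \cdots x_t \cdot x_j^{p^{i+1}} \cdot y,
\]
and since $x_j^{p^{i+1}} \cdot y \in \Phi S^\bullet$, each summand lies in $f_{t-1} S^\bullet$. For the identification of the induced map on the subquotients, I would trace through the isomorphism $f_t S^\bullet / f_{t-1} S^\bullet \cong \Lambda^t \otimes S^\bullet$ of Lemma \ref{lem:filt_S}, under which $[x_1 \cdots x_t \cdot y]$ corresponds to $(x_1 \wedge \cdots \wedge x_t) \otimes \Phi^{-1}(y)$. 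The identification $\Phi \colon S^\bullet \xrightarrow{\cong} \Phi S^\bullet$ sends $x_j^{p^i}$ to $x_j^{p^{i+1}}$, so the above formula becomes
\[
\sum_{j=1}^{t}\, (x_1 \wedge \cdots \wedge \widehat{x_j} \wedge \cdots \wedge x_t) \otimes x_j^{p^i} \cdot \Phi^{-1}(y),
\]
which is precisely the value of $(\mu \otimes 1)\circ(1 \otimes \tau_i \otimes 1)\circ(\Delta \otimes 1)$, i.e.\ the Koszul differential $\tau_i$ applied to $(x_1 \wedge \cdots \wedge x_t) \otimes \Phi^{-1}(y)$.

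The main obstacle is to make this bookkeeping intrinsic to $\f$ rather than basis-dependent. I would address this by working with the natural surjection $(S^1)^{\otimes t} \otimes \Phi S^\bullet \twoheadrightarrow f_t S^\bullet$ provided by multiplication, and observing that its composite with the projection to $f_t S^\bullet / f_{t-1} S^\bullet$ factors through the alternating quotient, since a repeated tensor factor $x_j \otimes x_j$ produces $x_j^2 \in \Phi S^\bullet$, which drops the filtration by at least two. This gives a natural transformation $\Lambda^t \otimes S^\bullet \to f_t S^\bullet / f_{t-1} S^\bullet$ whose inverse realises the isomorphism of Lemma \ref{lem:filt_S}(2); the computation of $Q_i$ can then be carried out on these canonical generators, establishing the conclusion as an equality of morphisms in $\smodf$.
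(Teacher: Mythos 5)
Your argument is correct and is precisely the routine verification that the paper dismisses with ``Straightforward'': $Q_i$ kills $\Phi S^\bullet$ because it is a derivation vanishing on squares, the Leibniz rule then gives both the $\Phi S^\bullet$-linearity and the drop in filtration, and the identification of the subquotient map with $\tau_i$ follows by matching the Leibniz expansion against the coproduct $\Lambda^t \to \Lambda^{t-1}\otimes\Lambda^1$. Your final paragraph making the computation basis-free via the surjection $(S^1)^{\otimes t}\otimes\Phi S^\bullet \twoheadrightarrow f_t S^\bullet$ and the factorization through the exterior quotient is a worthwhile addition that the paper leaves implicit.
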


\begin{proof}
 Straightforward.
\end{proof}

The  differentials $\tau_0$ and $\tau_1$ define a bicomplex structure on
$\Lambda^\bullet \otimes S^\bullet$, which  can be displayed as:
\[
\xymatrixrowsep{10pt}
\xymatrixcolsep{10pt}
 \xymatrix{
&&&&&&&&\field\\
&&&&&&\Lambda ^1 \ar[r] \ar[d]& S^1 \\
&&&& \Lambda^2 \ar[r]\ar[d] & \Lambda  ^1 \otimes S^1 \ar[r]\ar[d] & S ^2 \\
&&\Lambda ^3 \ar[r]\ar[d]& \Lambda^2 \otimes S^1 \ar[r]\ar[d]& \Lambda^1 \otimes
S^2 \ar[r]\ar[d] & S^3 \\
\Lambda^4 \ar[r] \ar@{.>}[d]& \Lambda^3 \otimes S^1 \ar[r]\ar@{.>}[d] &
\Lambda^2 \otimes S^2 \ar[r]\ar@{.>}[d]& \Lambda^1 \otimes S^3
\ar[r]\ar@{.>}[d]& S^4 \\
&&&&&&&
}
\]

The portion of the bicomplex displayed indicates the essential features:
\begin{enumerate}
 \item 
the rows (respectively columns) are Koszul complexes, with differential $\tau_0$
(resp. $\tau_1$);
\item
the bicomplex is concentrated in a single quadrant and there are vanishing lines
of slope $1/2$ and $1$.
\end{enumerate}

\subsection{Filtering the functors $K_n$}

This section establishes the filtered version of Proposition
\ref{prop:homology_K_Q1}; the starting point is  the functorial homology of the
Koszul complexes, which is a standard calculation,  related to Proposition
\ref{prop:homology_Qi}:

\begin{prop}
\label{prop:hom_Koszul_complex}
	For $i \in \nat$, the homology of $(\Lambda^\bullet \otimes
S^\bullet , \tau_i)$ is $S^\bullet / \langle x^{2^i} \rangle$, concentrated in
homological degree zero.
\end{prop}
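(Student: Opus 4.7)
The plan is to reduce the computation to the classical commutative-algebra fact that the Koszul complex on a regular sequence is a free resolution of the quotient. Since the homology of a complex of functors in $\f$ is computed objectwise and the asserted isomorphism will be natural in $V$, it is enough to identify the homology after evaluation on an arbitrary $V \in \obj \fdvs$ of finite dimension $n$.

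First I would fix an ordered basis $x_1, \ldots, x_n$ of $V$, so that $S^\bullet (V)$ identifies with the polynomial algebra $\field[x_1, \ldots, x_n]$ and $\Lambda^\bullet (V)$ with the exterior algebra on the same generators. Unravelling the definition of $\tau_i$ via the coproduct $\Lambda^j \stackrel{\Delta}{\rightarrow} \Lambda^{j-1} \otimes \Lambda^1$ and the iterated Frobenius $\Lambda^1 \cong S^1 \hookrightarrow S^{p^i}$, and using the derivation property to extend from $\Lambda^1$, one checks on basis elements that the evaluated complex $(\Lambda^\bullet (V) \otimes S^\bullet (V), \tau_i)$ is precisely the classical Koszul complex attached to the sequence $x_1^{p^i}, \ldots, x_n^{p^i}$ in $\field[x_1, \ldots, x_n]$.

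Since $x_1^{p^i}, \ldots, x_n^{p^i}$ is a regular sequence in the Cohen--Macaulay ring $\field[x_1, \ldots, x_n]$ (each $x_k^{p^i}$ is a non-zero-divisor modulo the preceding ones), the standard Koszul resolution theorem gives that this complex is acyclic in positive homological degrees and that $H_0$ is naturally isomorphic to the quotient $\field[x_1, \ldots, x_n] / (x_1^{p^i}, \ldots, x_n^{p^i})$, which is precisely $(S^\bullet / \langle x^{p^i} \rangle)(V)$. Naturality in $V$ and $\aut (V)$-equivariance of the identification are automatic from the functorial construction of $\tau_i$.

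No serious difficulty is anticipated; the whole content is the identification of the functorial $\tau_i$ with the classical Koszul differential. A self-contained alternative avoiding any commutative-algebra input is an inductive K\"unneth argument: a direct sum decomposition $V = V' \oplus V''$ decomposes both the Koszul complex $\Lambda^\bullet (V) \otimes S^\bullet (V)$ and the target truncated symmetric power as tensor products, reducing to the rank-one case where the complex degenerates to the two-term complex $\Lambda^1 \otimes \field[x] \xrightarrow{\,\cdot\, x^{p^i}} \field[x]$ with homology $\field[x]/(x^{p^i})$ concentrated in degree zero.
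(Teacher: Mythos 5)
Your argument is correct. The paper does not actually write out a proof of this proposition: it is dismissed as ``a standard calculation, related to Proposition \ref{prop:homology_Qi}'', and the proof of that related proposition is exactly the rank-one-plus-K\"unneth reduction you offer as your self-contained alternative (compute the homology of $(\field[x], dx = x^{p^{i+1}})$ in rank one, then tensor). Your primary route --- unravelling $\tau_i$ on basis elements to identify the evaluated complex with the classical Koszul complex on the regular sequence $x_1^{p^i},\dots,x_n^{p^i}$ and invoking acyclicity of the Koszul complex of a regular sequence --- is an equally standard and arguably cleaner packaging of the same content; the two routes differ only in whether the regularity is verified one variable at a time (K\"unneth) or quoted wholesale from commutative algebra. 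One small point worth making explicit in either version: the cokernel defining $S^\bullet/\langle x^{p^i}\rangle$ is the quotient by the ideal generated by $p^i$-th powers of \emph{all} linear forms, and this coincides with $(x_1^{p^i},\dots,x_n^{p^i})$ because the Frobenius is additive and the scalars lie in the prime field; you use this identification implicitly when matching $H_0$ with the stated functor.
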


The following are analogous to the functors $K_n$ introduced in
Notation \ref{nota:Kn}:

\begin{nota}
 For an integer $a\geq1$ and $b \in \nat$, let $\kfrak_{a,b}$ denote the image of $\tau_0 :
\Lambda^a \otimes S^b \rightarrow \Lambda^{a-1} \otimes S^{b+1}$ and, by convention:
 \[
\kfrak_{0,b}:= \left\{
\begin{array}{ll}
\field & b=0\\
0 & b >0.
\end{array}
\right.
 \]
\end{nota}

\begin{rem}
 \ 
\begin{enumerate}
\item
The functor $\kfrak_{1,b}$ identifies with the symmetric power $S^{b+1}$.
 \item 
Taking the image of $\tau_0$ rather than the kernel, is best suited for the current application, where  the homology of the Koszul
complex intervenes.
\end{enumerate}
\end{rem}

\begin{lem}
\label{lem:K_associated_graded}
For $0<n \in \zed$, the filtration $f_* S^\bullet$ induces a filtration of
$K_n$ with associated graded:
\[
\mathrm{gr}K_n \cong  \bigoplus _{a+	2b+1 = n} \kfrak_{a,b}.
\]
\end{lem}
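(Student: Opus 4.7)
The plan is to filter $K_n$ by $f_t K_n := K_n \cap f_t S^n$ and identify the associated graded as cycles in the Koszul complex, which, by the exactness asserted in Proposition \ref{prop:hom_Koszul_complex}, coincide with the images $\kfrak_{a,b}$.

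First, I would record the key input. By Proposition \ref{prop:Q_filter_tau}, the derivation $Q_0$ satisfies $Q_0(f_t S^\bullet) \subseteq f_{t-1} S^{\bullet+1}$, and the map induced on the filtration quotients
\[
	\Lambda^t \otimes S^\bullet \cong f_t S^\bullet / f_{t-1} S^\bullet
	\longrightarrow
	f_{t-1} S^{\bullet+1}/f_{t-2} S^{\bullet+1} \cong \Lambda^{t-1} \otimes S^{\bullet+1}
\]
is the Koszul differential $\tau_0$. In particular, since $Q_0$ strictly lowers the $f_*$-filtration, the canonical map
\[
	\pi_t : f_t K_n / f_{t-1} K_n \longrightarrow f_t S^n / f_{t-1} S^n \cong \Lambda^t \otimes S^{(n-t)/2}
\]
is injective, and its image lies inside $\ker \tau_0$ (since $Q_0(x)=0$ forces $\tau_0[x]=0$).

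The core of the proof is the reverse inclusion, i.e.\ that every $\tau_0$-cycle in $\mathrm{gr}_t S^n$ lifts to a class in $\mathrm{gr}_t K_n$. Given $x \in f_t S^n$ with $\tau_0([x])=0$, I would construct the lift by iterated correction: $Q_0(x)$ already lies in $f_{t-2} S^{n+1}$, and its class $[Q_0(x)]$ in some lower filtration quotient is a $\tau_0$-cycle (because $Q_0^2=0$). By Proposition \ref{prop:hom_Koszul_complex} the Koszul complex $(\Lambda^\bullet\otimes S^\bullet,\tau_0)$ is exact away from the position $\Lambda^0\otimes S^0$, so this cycle is a $\tau_0$-boundary of some $[y]\in\mathrm{gr}_{t-1-2k} S^n$ for the appropriate $k$; lifting $y$ and replacing $x$ by $x-y$ drops the filtration of $Q_0(x)$. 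Iterating, the filtration of $Q_0(x-y-z-\cdots)$ decreases at each step and must reach $f_{-1}=0$ in finitely many steps, so the correction lies in $K_n$ and represents $[x]$ modulo $f_{t-1}K_n$. This gives $\mathrm{gr}_t K_n \cong \ker\bigl(\tau_0 : \Lambda^t\otimes S^{(n-t)/2} \to \Lambda^{t-1}\otimes S^{(n-t)/2+1}\bigr)$.

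Finally, since we are away from the bottom of the Koszul complex (as $n>0$), another application of exactness yields
\[
	\ker\bigl(\tau_0 : \Lambda^t\otimes S^{(n-t)/2} \to \Lambda^{t-1}\otimes S^{(n-t)/2+1}\bigr)
	= \mathrm{image}\bigl(\tau_0 : \Lambda^{t+1}\otimes S^{(n-t)/2-1} \to \Lambda^t\otimes S^{(n-t)/2}\bigr),
\]
which by definition is $\kfrak_{t+1,(n-t)/2-1}$. Reindexing by $a = t+1$, $b=(n-t)/2-1$ gives $a+2b+1 = n$ and assembles the associated graded into the claimed direct sum. The main obstacle is the termination-of-iteration step in the second paragraph: one must verify that at each stage the Koszul exactness applies (we never land at the exceptional position $\Lambda^0\otimes S^0$ for $n>0$) and that the filtration indices of the successive corrections stay strictly below $t$, so that the final element represents the original class $[x]$ in $\mathrm{gr}_t K_n$.
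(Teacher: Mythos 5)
Your argument is correct, but it takes a genuinely different route from the paper's. The paper's proof is a two-line affair: by Lemma \ref{lem:filt_S}, upon evaluation on $V$ the filtration $f_*S^\bullet(V)$ splits (via monomial bases) compatibly with the $\Phi S^\bullet(V)$-module structure, and since $Q_0$ is $\Phi S^\bullet$-linear and induces $\tau_0$ on the filtration quotients (Proposition \ref{prop:Q_filter_tau}), the map $Q_0\colon S^{n-1}(V)\to S^n(V)$ identifies \emph{on the nose} with $\bigoplus\tau_0$; the lemma follows at once because $K_n$ is the image of $Q_0$ for $n>0$. You avoid choosing any splitting and instead run a filtration-lifting argument: $\mathrm{gr}_tK_n$ injects into $\ker\tau_0\subset\mathrm{gr}_tS^n$, and the reverse inclusion is obtained by successively correcting a representative, using the exactness of the Koszul rows (Proposition \ref{prop:hom_Koszul_complex}) at each stage, with termination because the filtration is bounded below. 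Your check that the exceptional spot $\Lambda^0\otimes S^0$ is never encountered (the rows involved have total degree $n$ or $n+1$, both positive) is exactly the point that needs verifying, and your final conversion of $\ker\tau_0$ into $\operatorname{im}\tau_0=\kfrak_{t+1,b}$ is again Koszul exactness, paralleling the paper's implicit use of $K_n=\operatorname{im}Q_0$. Two minor remarks. First, the injectivity of $\pi_t$ is just the second isomorphism theorem for the induced filtration $f_tK_n=K_n\cap f_tS^n$; it does not use that $Q_0$ lowers filtration (that fact is only needed for the inclusion of the image in $\ker\tau_0$). Second, the paper's splitting argument buys slightly more than the statement of the lemma: it identifies $K_n(V)$ itself, not merely its associated graded, with $\bigoplus\kfrak_{a,b}(V)$ as $S^\bullet(V)$-modules via $\Phi$, and this refinement is what is invoked later for the local cohomology computations (see the remark following Lemma \ref{lem:associated_graded_L}); your argument proves the lemma as stated but would not by itself supply that strengthening.
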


\begin{proof}
	For $n>0$, $K_n$ is the image of $Q_0 : S^{n-1} \rightarrow S^n$.
Passing to the associated graded, the morphism $Q_0$ induces
	\[
		\bigoplus \tau_0 : \bigoplus _{a+2b+1=n} \Lambda^a \otimes S^b
\rightarrow \bigoplus _{a+2b+1=n}\Lambda^{a-1} \otimes S^{b+1},
	\]
by Proposition \ref{prop:Q_filter_tau}. Moreover, evaluated on $V \in \obj
\fdvs$, as a morphism of vector spaces, $Q_0$ identifies with $\bigoplus
\tau_0$, using the splitting of the filtration in $S^\bullet (V)$-modules given in Lemma \ref{lem:filt_S}.
\end{proof}

\begin{lem}
\label{lem:tau1_diff_J}
 The derivation $\tau_1$ induces a differential $\tau_1 : \kfrak_{a+1, b-2}
\rightarrow \kfrak_{a,b}$. For $a>0$ and $b \geq 0$, the short exact sequences
from the Koszul complexes:
\[
 0 
\rightarrow 
\kfrak_{a,b} \rightarrow \Lambda^{a-1} \otimes S^{b+1}
\rightarrow 
\kfrak_{a-1,b+1}
\rightarrow 0
\]
induce a short exact sequence of complexes:
\[
 \xymatrix{
&\ar@{.>}[d]& \ar@{.>}[d]& \ar@{.>}[d] \\
0\ar[r]&
\kfrak_{3, b-4} \ar[r] \ar[d]_{\tau_1}& \Lambda^2 \otimes S^{b-3}
\ar[r]\ar[d]_{\tau_1}& \kfrak_{2, b-3} \ar[d]_{\tau_1}
\ar[r]&0
\\
0\ar[r]&
\kfrak_{2, b-2} \ar[r] \ar[d]_{\tau_1}& \Lambda^1 \otimes S^{b-1}
\ar[r]\ar[d]_{\tau_1}& \kfrak_{1, b-1} \ar[d]_{\tau_1}
\ar[r]&0
\\
0\ar[r]&
\kfrak_{1,b} \ar[r]  & S^{b+1} \ar[r] & 0
\ar[r]&0.
}
\]
\end{lem}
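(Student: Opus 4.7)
The plan is to reduce everything to the commutation relation $\tau_0 \tau_1 = \tau_1 \tau_0$ on the bicomplex $\Lambda^\bullet \otimes S^\bullet$. This can be checked either by direct computation on a monomial basis of $(\Lambda^\bullet \otimes S^\bullet)(V)$, or inferred from Lemma \ref{lem:Q_i_commute}: the Milnor derivations $Q_0, Q_1$ commute on $S^\bullet$, and Proposition \ref{prop:Q_filter_tau} identifies $\tau_0, \tau_1$ as the maps they induce on the associated graded $f_\bullet S^\bullet / f_{\bullet -1} S^\bullet \cong \Lambda^\bullet \otimes S^\bullet$, so commutation is inherited. In characteristic two this coincides with the standard anti-commutation relation for Koszul differentials.

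Given commutation, $\tau_1$ preserves the image of $\tau_0$: for $x = \tau_0(y) \in \kfrak_{a+1, b-2} \subset \Lambda^a \otimes S^{b-1}$, one computes $\tau_1(x) = \tau_1 \tau_0 (y) = \tau_0 \tau_1 (y) \in \tau_0(\Lambda^a \otimes S^b) = \kfrak_{a, b}$. This supplies the induced morphism $\tau_1 : \kfrak_{a+1, b-2} \to \kfrak_{a, b}$ of the statement, and the identity $\tau_1^2 = 0$ is inherited from the ambient bicomplex.

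For the short exact sequence, Proposition \ref{prop:hom_Koszul_complex} applied with $i = 0$ gives the acyclicity of $(\Lambda^\bullet \otimes S^\bullet, \tau_0)$ in positive homological degree. Thus, for $a \geq 1$, the kernel of $\tau_0 : \Lambda^{a-1} \otimes S^{b+1} \to \Lambda^{a-2} \otimes S^{b+2}$ is exactly $\kfrak_{a, b}$, while the image of this same map is $\kfrak_{a-1, b+1}$ by definition; the first isomorphism theorem then yields the desired short exact sequence $0 \to \kfrak_{a,b} \to \Lambda^{a-1}\otimes S^{b+1} \to \kfrak_{a-1, b+1} \to 0$.

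It remains to check that each rectangle in the displayed three-column diagram commutes. By construction, every vertical arrow is the same underlying $\tau_1$ on $\Lambda^\bullet \otimes S^\bullet$, either restricted to the subfunctor $\kfrak_{a, b}$ on the left column or descended to the quotient $\kfrak_{a-1, b+1}$ on the right; both operations are well defined precisely because $\tau_1$ commutes with $\tau_0$. The rows are short exact by the previous step, so this gives a genuine short exact sequence of complexes. The only real obstacle is bookkeeping of the bigrading shifts produced by $\tau_0$ and $\tau_1$; once the indexing conventions are fixed, no calculation beyond $\tau_0\tau_1 = \tau_1\tau_0$ is required.
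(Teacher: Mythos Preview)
Your proof is correct and follows the same approach as the paper: the paper's proof is the single sentence ``The horizontal $\tau_0$-Koszul complexes are acyclic,'' which is exactly your third paragraph (invoking Proposition \ref{prop:hom_Koszul_complex} with $i=0$). The commutation $\tau_0\tau_1=\tau_1\tau_0$ and the resulting bicomplex structure are already established in Section \ref{subsect:filter_symm_powers} (Proposition \ref{prop:Q_filter_tau} and the displayed bicomplex that follows), so the paper takes the first, second, and fourth paragraphs of your argument as read; you have simply unpacked what the one-line proof is relying on.
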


\begin{proof}
The horizontal $\tau_0$-Koszul complexes are acyclic.
\end{proof}

\begin{prop}
\label{prop:J-complex_homology}
For $b\in \nat$, the complex
\[ 
 \ldots \rightarrow \kfrak_{n+1, b-2n} \stackrel{\tau_1} {\rightarrow}
\kfrak_{n, b-2n +2} \rightarrow \ldots
\rightarrow \kfrak_{1, b} \cong S^{b+1}
\]
 has homology $p_{b+1} \Ibar$ concentrated in homological degree zero.
\end{prop}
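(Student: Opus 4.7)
The plan is induction on $b$, using the short exact sequence of $\tau_1$-complexes exhibited in Lemma \ref{lem:tau1_diff_J}. For the base case $b=0$, the complex reduces to $\kfrak_{1,0}\cong S^1\cong \Lambda^1$, which agrees with $p_1\Ibar$ (using $p_0\Ibar=0$ together with Lemma \ref{lem:p=2_inj_proj}).

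Writing $\mathcal{A}_b$ for the complex of the Proposition, the three columns of the diagram in Lemma \ref{lem:tau1_diff_J} assemble into a short exact sequence
\[
0 \longrightarrow \mathcal{A}_b \longrightarrow \mathcal{B}_b \longrightarrow \Sigma \mathcal{A}_{b-1} \longrightarrow 0
\]
of $\tau_1$-complexes, where $\mathcal{B}_b$ is the component of the $\tau_1$-Koszul complex $\kz_1$ terminating in $S^{b+1}$, and the rightmost column identifies term-by-term with the one-position shift of $\mathcal{A}_{b-1}$. By Proposition \ref{prop:hom_Koszul_complex}, the homology of $\mathcal{B}_b$ is concentrated in degree zero, where it equals the degree $b+1$ piece of $S^\bullet/\langle x^2\rangle$, namely $\Lambda^{b+1}$. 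Running the associated long exact sequence together with the inductive hypothesis $H_0(\mathcal{A}_{b-1})\cong p_b\Ibar$ and $H_{>0}(\mathcal{A}_{b-1})=0$ yields $H_{>0}(\mathcal{A}_b)=0$ and the short exact sequence
\[
0 \longrightarrow p_b\Ibar \longrightarrow H_0(\mathcal{A}_b) \longrightarrow \Lambda^{b+1} \longrightarrow 0.
\]

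The main obstacle is to identify this extension with $p_{b+1}\Ibar$. The approach is to construct a natural surjection $H_0(\mathcal{A}_b)\twoheadrightarrow p_{b+1}\Ibar$. Yoneda for $I_\field$ gives $\hom_\f(S^{b+1},\Ibar)\cong \field$; any non-trivial morphism $S^{b+1}\to\Ibar$ has image a subfunctor of $\Ibar$ with head $\Lambda^{b+1}$ and socle contained in the socle $\Lambda^1$ of $\Ibar$, forcing the image to equal $p_{b+1}\Ibar$ by the uniseriality of $\Ibar$ (Lemma \ref{lem:p=2_inj_proj}). It remains to check that the composite
\[
\kfrak_{2,b-2} \stackrel{\tau_1}{\longrightarrow} \kfrak_{1,b}=S^{b+1} \longrightarrow \Ibar
\]
is trivial. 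Using $\tau_1\tau_0=\tau_0\tau_1$ to rewrite this as $\tau_0\tau_1:\Lambda^2\otimes S^{b-2}\to S^{b+1}\to\Ibar$, a direct evaluation gives $\tau_0\tau_1(\lambda_1\wedge\lambda_2\otimes s)=\lambda_1\lambda_2^2 s+\lambda_2\lambda_1^2 s$, whose image in $I_\field(V)=\field^{V^\sharp}$ is the function $\phi\mapsto 2\phi(\lambda_1)\phi(\lambda_2)\phi(s)=0$ (using $\phi(\lambda)^2=\phi(\lambda)$ over $\field_2$ together with characteristic two). Hence $S^{b+1}\to\Ibar$ descends to a surjection $H_0(\mathcal{A}_b)\twoheadrightarrow p_{b+1}\Ibar$, which is an isomorphism by comparison of the composition factors $\Lambda^1,\ldots,\Lambda^{b+1}$ on both sides.
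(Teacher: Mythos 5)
Your proof is correct and follows essentially the same route as the paper: induction on $b$ via the short exact sequence of $\tau_1$-complexes from Lemma \ref{lem:tau1_diff_J}, the Koszul homology computation of Proposition \ref{prop:hom_Koszul_complex} with $i=1$, and the resolution of the extension problem by producing a surjection onto $p_{b+1}\Ibar$ and comparing composition factors, exactly as in the argument of Proposition \ref{prop:homology_K_Q1} that the paper invokes. The only (harmless) variation is that you verify the vanishing of $\kfrak_{2,b-2}\to S^{b+1}\to \Ibar$ by an explicit element computation, where the paper's template argument deduces it formally from $\kfrak_{2,b-2}(\field)=0$ and the representing property of $I_\field$.
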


\begin{proof}
 The proof is by induction upon $b$, using the short exact sequence of complexes
provided by Lemma \ref{lem:tau1_diff_J}. The initial case $b=0$ is  by
inspection; for the inductive step, use the fact that the $\tau_1$ Koszul
complex:
\[
 \ldots \rightarrow \Lambda^{n} \otimes S^{b-2n+1} \stackrel{\tau_1}
{\rightarrow} \Lambda^{n-1} \otimes S^{b-2n+3} \rightarrow \ldots \rightarrow
S^{b+1}
\]
has homology $\Lambda^{b+1}$ concentrated in homological degree zero, by
Proposition \ref{prop:hom_Koszul_complex} (with $i=1$). The proof is completed
by the argument employed in the proof of Proposition \ref{prop:homology_K_Q1}.
\end{proof}

\subsection{Filtering the functors $L_n$ and $\tilde{L}_n$}

\begin{nota}
 For integers $a>0$, $b \geq 0$, let $\lfrak_{a,b}$ denote the cokernel of
$\tau_1 : \kfrak_{a+1, b-2} \rightarrow \kfrak_{a,b}$.
\end{nota}

Proposition \ref{prop:J-complex_homology} implies the following identification:

\begin{lem}
\label{lem:ker_ident_lfrak}
\
\begin{enumerate}
	\item
	 If $a>1$,  $\lfrak_{a,b} \cong \mathrm{Ker} \{  \kfrak_{a, b}
\stackrel{\tau_1}{\rightarrow} \kfrak_{a-1,b+2}\}$;
	 \item
	 $\lfrak_{1,b} \cong p_{b+1} \Ibar$. 
\end{enumerate}
\end{lem}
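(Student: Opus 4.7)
The plan is to deduce both identifications as immediate consequences of Proposition \ref{prop:J-complex_homology}, which computes the homology of the $\tau_1$-complex
\[
	C(b'):\quad \ldots \to \kfrak_{n+1, b'-2n} \stackrel{\tau_1}{\to} \kfrak_{n, b'-2n+2} \to \ldots \to \kfrak_{1, b'}
\]
to be $p_{b'+1}\Ibar$, concentrated in homological degree zero. The two parts of the lemma then correspond to reading off this homology at the terminal position and at an interior position of such a complex, respectively.

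For part (2), I observe that $\kfrak_{1, b}$ is the rightmost term of $C(b)$, so the homology there is by definition the cokernel of the unique incoming differential $\tau_1 : \kfrak_{2, b-2} \to \kfrak_{1, b}$; this cokernel is exactly $\lfrak_{1, b}$ by the defining notation. Invoking Proposition \ref{prop:J-complex_homology} then yields $\lfrak_{1, b} \cong p_{b+1}\Ibar$.

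For part (1) with $a > 1$, I place $\kfrak_{a, b}$ inside the complex $C(b + 2(a-1))$, where it occupies homological degree $a - 1 > 0$. Proposition \ref{prop:J-complex_homology} gives exactness at this position, so that
\[
	\mathrm{Im}\bigl(\tau_1 : \kfrak_{a+1, b-2} \to \kfrak_{a, b}\bigr) = \mathrm{Ker}\bigl(\tau_1 : \kfrak_{a, b} \to \kfrak_{a-1, b+2}\bigr).
\]
Combining this equality with the definition $\lfrak_{a, b} = \mathrm{Coker}(\tau_1 : \kfrak_{a+1, b-2} \to \kfrak_{a, b})$ and the first isomorphism theorem then yields the announced identification via the natural map $\lfrak_{a, b} \to \kfrak_{a-1, b+2}$ induced by the outgoing differential.

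The argument is essentially bookkeeping once Proposition \ref{prop:J-complex_homology} is in hand, and no serious obstacle is anticipated. The only mild subtlety is to keep track of the joint shift in the two indices $(a, b)$ when identifying the arrow $\tau_1 : \kfrak_{a+1, b-2} \to \kfrak_{a, b}$ with a position in the complex $C(b + 2(a - 1))$.
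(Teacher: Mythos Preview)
Your approach is exactly the paper's---both parts are read off directly from Proposition~\ref{prop:J-complex_homology}---and part (2) is fine. But your argument for part (1) does not establish what is written. From exactness at $\kfrak_{a,b}$ you correctly obtain $\mathrm{Im}(\tau_1^{\mathrm{in}}) = \mathrm{Ker}(\tau_1^{\mathrm{out}})$, and hence $\lfrak_{a,b} = \kfrak_{a,b}/\mathrm{Ker}(\tau_1^{\mathrm{out}})$. The first isomorphism theorem, applied via your ``natural map $\lfrak_{a,b}\to\kfrak_{a-1,b+2}$'', then yields
\[
\lfrak_{a,b}\ \cong\ \mathrm{Im}\bigl(\tau_1:\kfrak_{a,b}\to\kfrak_{a-1,b+2}\bigr)\ \subset\ \kfrak_{a-1,b+2},
\]
not $\mathrm{Ker}(\tau_1^{\mathrm{out}})\subset\kfrak_{a,b}$. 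These are different subobjects of different ambient functors, and they are not isomorphic in general: for $a=2$, $b=0$ one has $\kfrak_{3,-2}=0$, so $\lfrak_{2,0}=\kfrak_{2,0}\cong\Lambda^2$, whereas $\mathrm{Ker}(\kfrak_{2,0}\to\kfrak_{1,2})=0$ by exactness in positive homological degree.

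In fact the printed statement appears to carry an index slip. The identification that is actually used later (see the diagram following Lemma~\ref{lem:lfrak_smodf}, where $\lfrak_{a,\bullet}\hookrightarrow\kfrak_{a-1,\bullet+2}$) is precisely $\lfrak_{a,b}\cong\mathrm{Im}(\kfrak_{a,b}\to\kfrak_{a-1,b+2})$; equivalently, one step further down by exactness (when $a>2$), $\lfrak_{a,b}\cong\mathrm{Ker}(\kfrak_{a-1,b+2}\to\kfrak_{a-2,b+4})$. Your argument proves this corrected version; the gap is that you claimed it coincides with the literal statement, which it does not.
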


Recall from Section \ref{subsect:Q0_kernel} that $Q_1$ induces a morphism $Q_1 :
K_{n-3} \rightarrow K_n$ with image
$L_n$ and kernel $\tilde{L} _{n-3}$. It follows that the cokernel of $Q_1$
occurs in an extension
\[
 0
\rightarrow
\tilde{L}_n / L_n \rightarrow \mathrm{Coker}Q_1 \rightarrow L_{n+3} \rightarrow
0.
\]

\begin{lem}
\label{lem:associated_graded_L}
 For $n>0$ and $\mathrm{Coker}Q_1$ as above, the filtration $f_* S^\bullet$
induces a finite filtration of $\mathrm{Coker}Q_1$    with associated
graded
\[
\mathrm{gr}\mathrm{Coker}Q_1 \cong
 \bigoplus_{a+2b+1 =n, a \geq 1} \lfrak_{a,b},
\]
where, for $2 (b+1)=n$, the subobject $\lfrak_{1, b}$ is isomorphic to
$\tilde{L}_{2(b+1)}  /L_{2(b+1)} \cong p_{b+1} \Ibar $ and there is an induced
isomorphism:
\[
\mathrm{gr}L_{n+3} \cong
 \bigoplus_{a+2b+1 =n, a \geq 2} \lfrak_{a,b}.
\]
\end{lem}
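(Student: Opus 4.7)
The plan is to exploit the filtration $f_\bullet$ of Section \ref{subsect:filter_symm_powers}, which by Proposition \ref{prop:Q_filter_tau} converts the derivation $Q_1$ into the Koszul differential $\tau_1$ on the associated graded, and to read off $\mathrm{Coker}\, Q_1$ from associated gradeds. Finiteness of the filtration on each $S^n(V)$ (Lemma \ref{lem:filt_S}(3)) supplies the necessary convergence.

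I would filter both $K_{n-3}$ and $K_n$ by intersection, $f_t K_n := K_n \cap f_t S^n$, so that by Lemma \ref{lem:K_associated_graded} there is at most one nonzero summand at each level, $\mathrm{gr}_t K_n \cong \kfrak_{t+1,(n-t-2)/2}$. Since Proposition \ref{prop:Q_filter_tau} yields $Q_1(f_t K_{n-3}) \subseteq f_{t-1} K_n$ and descends to $\tau_1$ on associated gradeds, equipping $\mathrm{Coker}\, Q_1 = K_n/L_n$ with the induced filtration $f_t \mathrm{Coker}\, Q_1 := (f_t K_n + L_n)/L_n$ gives the key identification
\[
\mathrm{gr}_t\, \mathrm{Coker}\, Q_1 \;\cong\; \mathrm{Coker}\bigl(\tau_1 : \mathrm{gr}_{t+1} K_{n-3} \to \mathrm{gr}_t K_n\bigr) \;=\; \lfrak_{t+1,(n-t-2)/2},
\]
by the definition of $\lfrak_{a,b}$. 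Summing over $t \geq 0$ yields the asserted decomposition $\mathrm{gr}\,\mathrm{Coker}\, Q_1 \cong \bigoplus_{a+2b+1=n,\, a\geq 1}\lfrak_{a,b}$ on setting $a = t+1$.

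For the bottom piece ($a = 1$, present only when $n = 2(b+1)$ is even), Lemma \ref{lem:ker_ident_lfrak} identifies $\lfrak_{1,b} \cong p_{b+1}\Ibar$, and Proposition \ref{prop:homology_K_Q1} gives $\tilde{L}_n/L_n \cong p_{n/2}\Ibar$. To realise these as the same subobject of $\mathrm{Coker}\, Q_1$, I would invoke the observation already made in the proof of Proposition \ref{prop:homology_K_Q1} that the Frobenius $\Phi : S^{n/2} \to S^n$ factors through $\tilde{L}_n \subseteq K_n$ and that its composite with the projection $\tilde{L}_n \twoheadrightarrow \tilde{L}_n/L_n$ is surjective; since $\Phi(S^{n/2}) \subseteq \Phi S^n = f_0 S^n$, the subobject $\tilde{L}_n/L_n \subseteq \mathrm{Coker}\, Q_1$ lies in $f_0 \mathrm{Coker}\, Q_1 \cong p_{n/2}\Ibar$. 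Both subobjects have identical finite composition factors by Lemma \ref{lem:p=2_inj_proj}, forcing equality. Quotienting the induced filtration by this bottom piece, the short exact sequence of the statement then realises $\mathrm{gr}\,L_{n+3} \cong \bigoplus_{a \geq 2,\,a+2b+1=n}\lfrak_{a,b}$.

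The main obstacle I anticipate is strictness: to pass to associated gradeds on the cokernel, one needs $f_t K_n \cap L_n = Q_1(f_{t+1} K_{n-3})$, equivalently $Q_1^{-1}(f_{t-1} K_n) = f_t K_{n-3} + \tilde{L}_{n-3}$. This is not automatic for filtered morphisms, but I would establish it either by induction on $t$ using the finiteness of the filtration, or by verifying that the spectral sequence of the filtered two-term complex $K_{n-3} \xrightarrow{Q_1} K_n$ collapses at $E_1$, its $E_1$-page already being identified with $\bigoplus \lfrak_{a,b}$ via Lemma \ref{lem:ker_ident_lfrak} and Proposition \ref{prop:J-complex_homology}.
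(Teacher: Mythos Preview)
Your approach is correct and broadly parallel to the paper's, which simply refers back to the proof of Lemma~\ref{lem:K_associated_graded}. The difference lies in how the strictness obstacle is handled. The paper's argument (implicit in that reference) is more direct: by Lemma~\ref{lem:filt_S}, upon evaluation at any fixed $V$ the filtration $f_\bullet S^\bullet(V)$ \emph{splits} as $S^\bullet(V)$-modules (non-equivariantly), so that $Q_1$ on $V$ is literally the direct sum $\bigoplus \tau_1$ of linear maps. Hence $\mathrm{Coker}\,Q_1(V) \cong \bigoplus \lfrak_{a,b}(V)$ as vector spaces, the induced filtration on the cokernel is automatically strict, and the identification of the associated graded in $\f$ follows pointwise. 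Your proposed resolutions via induction or spectral-sequence degeneration would also work, but the dimension count underpinning the latter is precisely this pointwise splitting, so one may as well invoke it directly. Your treatment of the bottom piece $\lfrak_{1,b}$ via the Frobenius is correct and more explicit than the paper's terse reference.
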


\begin{proof}
	The result follows as for the proof of Lemma
\ref{lem:K_associated_graded}.
\end{proof}

\begin{rem}
 For the calculations of local cohomology, it is important that the isomorphisms of Lemma \ref{lem:K_associated_graded} and of Lemma \ref{lem:associated_graded_L} upon evaluation on $V \in \obj \fdvs$ correspond to isomorphisms in the category of $S^\bullet (V)$-modules (as in Lemma \ref{lem:filt_S}). 
\end{rem}

\section{The local cohomology spectral sequence}

The local cohomology theorem for $ku$ implies that there is a spectral sequence:
\[
 E^2 := H^{*,*} _I (ku^* (BV_+))
\Rightarrow 
ku_* (BV_+),
\]
where the $E^2$-term is the local cohomology with respect to the augmentation
ideal (see \cite{bruner_greenlees} for  generalities on the spectral
sequence, for arbitrary finite groups). Here $V$ is taken to be a fixed elementary abelian
$2$-group of rank $r$. The aim of this section is to indicate how the spectral sequence can be
understood  conceptually, by using the functorial calculations introduced
in Section \ref{sect:milnor}. 

 There are two key ingredients: the functorial description of local duality and of local cohomology
given in Section \ref{sect:local_duality} and an explanation of the relationship
between the local cohomology of $\torsion ku^* (BV_+) $ and   
that of $\cotorsion ku^* (BV_+)$.

The local cohomology spectral sequence can be made $\aut (V) $-equivariant but
it is clearly not functorial as it stands with respect to arbitrary vector space morphisms;
the techniques of this section do however show that the behaviour of the spectral
sequence is largely determined by functorial structure. 

\begin{rem}
Throughout,  the grading shifts  resulting from working with graded modules are suppressed. The gradings are not essential for the presentation of the arguments; the reader is encouraged to supply them.
\end{rem}

\subsection{The case of integral cohomology}

The local cohomology spectral sequence for $\hz_* (BV_+)$ already
illustrates some of the salient features of the local cohomology spectral sequence.
It can also be used in the analysis of the local cohomology spectral
sequence for $ku_* (BV_+)$ via the morphism induced by  $ku \rightarrow \hz$.

Let $V \in \obj \fdvs$ have rank $r$ and consider the short exact sequence
\[
	0
	\rightarrow
	\hz^* (BV)
	\rightarrow
	\hz ^* (BV_+)
	\rightarrow
	\zed
	\rightarrow
	0
\]
relating reduced and unreduced cohomology of $BV$, in the category of $\hz^*
(BV_+)$-modules, so that $\hz^* (BV)$ corresponds to the augmentation ideal $I$.
 There is an induced exact sequence of local cohomology groups:
 \begin{eqnarray}
  \label{eqn:connecting_hz}
0
 	\rightarrow
 	H^0_I (\hz^* (BV))
 	\rightarrow
 	H^0 _I (\hz^* (BV_+))
 	\rightarrow
\\ 
 \notag	
\rightarrow \zed
 	\rightarrow
 	H^1 _I (\hz^* (BV))
 	\rightarrow
 	H^1_I (\hz^* (BV_+))
 	\rightarrow 0
 \end{eqnarray}
and, for $j >1$, a natural  isomorphism $H^j _I (\hz^* (BV_+)) \cong H^j _I
(\hz^* (BV))$.

Hence, up to calculating the connecting morphism $\zed 	\rightarrow H^1 _I
(\hz^* (BV))$, the local cohomology of $\hz^* (BV_+)$ is determined by that of
$\hz^* (BV)$. Moreover,
it is clear that $H^1_I(\hz^* (BV))$ is annihilated by $2$, hence it suffices to
consider behaviour after reducing mod $2$.  

\begin{nota}
\label{nota:truncated_koszul}
	For $a \in \nat$, let $\sigma_{\geq a} \kz_0$ denote the brutal
truncation to the right of the Koszul complex:
	\[
		\ldots
		\rightarrow
		 \Lambda^{a+1}\otimes S^{\bullet-1} 
\stackrel{\tau_0}{\rightarrow}
 \Lambda^{a} \otimes S^\bullet
\]
and let $\sigma_{\leq a} \kz_0$ denote the brutal truncation to the left:
\[
	 \Lambda^{a} \otimes S^\bullet 
	\stackrel{\tau_0}{\rightarrow}
 \Lambda^{a-1}\otimes S^{\bullet +1} 
	\stackrel{\tau_0}{\rightarrow}
	\ldots
	\rightarrow
	S^{\bullet +a}.
\]
\end{nota}

\begin{lem}
\label{lem:local_cohom_calc_hz}
	For $a \in \nat$,
	\begin{enumerate}
		\item
		$\sigma_{\geq a}\kz_0 $ is an $S^\bullet$-free resolution of
$\kfrak_{a, \bullet}$;
		\item
		for $a \geq 1$, the complex $\sigma_{\leq a } \kz_0$ has
homology $\field$ in homological degree $0$ and $\kfrak_{a+1,\bullet}$ in homological
degree $a$.
		\end{enumerate}
Moreover, there are morphisms of complexes
\begin{eqnarray*}
	&& \kz_0 \twoheadrightarrow \sigma_{\geq 1 } \kz_0\twoheadrightarrow
\sigma_{\geq 2}\kz_0 \twoheadrightarrow \ldots \\
	&& \sigma_{\leq 0}\kz_0  \cong S^\bullet
\hookrightarrow \sigma_{\leq 1} \kz_0 \hookrightarrow \sigma_{\leq 2} \kz_0
\hookrightarrow \ldots \subset \kz_0 .
\end{eqnarray*}
\end{lem}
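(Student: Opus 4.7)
The proof is essentially a bookkeeping exercise leveraging the fact that $\kz_0$ is exact except at its rightmost position, where its homology is the constant functor $\field$. This is the case $i=0$ of Proposition \ref{prop:hom_Koszul_complex}, which identifies the homology of $(\Lambda^\bullet \otimes S^\bullet, \tau_i)$ as $S^\bullet / \langle x^{2^i} \rangle$, hence $S^\bullet/\langle x\rangle \cong \field$ in the case at hand. Both parts of the lemma follow from tracking how the brutal truncations of $\kz_0$ interact with this known exactness, and the morphisms of complexes at the end of the statement are simply the structural projections and inclusions that any brutal truncation comes equipped with.

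For part (1), $\kfrak_{a, \bullet}$ is by definition (for $a \geq 1$) the image of $\tau_0 : \Lambda^a \otimes S^\bullet \to \Lambda^{a-1} \otimes S^{\bullet + 1}$; the case $a = 0$ reduces to the classical Koszul resolution of $\field$. For $a \geq 1$, exactness of $\kz_0$ at the position $\Lambda^a \otimes S^\bullet$ identifies this image with the cokernel of $\tau_0 : \Lambda^{a+1} \otimes S^{\bullet - 1} \to \Lambda^a \otimes S^\bullet$, so that the augmented complex $\sigma_{\geq a} \kz_0 \to \kfrak_{a, \bullet} \to 0$ is exact at position $a$; exactness at positions $> a$ is inherited directly from $\kz_0$. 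The terms $\Lambda^k \otimes S^\bullet$ are $S^\bullet$-free in $\smodf$ by the free-module adjunction of Lemma \ref{lem:S-modules}, giving the asserted free resolution.

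For part (2), removing the terms of $\kz_0$ strictly above position $a$ does not affect the differentials at positions $0 \leq k < a$, so the homology there is inherited from $\kz_0$: zero for $0 < k < a$ and $\field$ at position zero. At the new top position $a$, the incoming differential from $\Lambda^{a+1} \otimes S^{\bullet - 1}$ has been discarded, so the homology there is the full kernel of $\tau_0 : \Lambda^a \otimes S^\bullet \to \Lambda^{a-1} \otimes S^{\bullet + 1}$; exactness of $\kz_0$ at this position (which holds for $a \geq 1$) identifies this kernel with the image of $\tau_0 : \Lambda^{a+1} \otimes S^{\bullet - 1} \to \Lambda^a \otimes S^\bullet$, which is $\kfrak_{a+1, \bullet}$ by definition. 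The main (mild) obstacle is keeping track of the grading conventions built into the definition of $\kz_0$ in $\smodf$, in particular confirming that the cokernel of $\Lambda^1 \otimes S^{\bullet + a - 1} \to S^{\bullet + a}$ at the bottom of $\sigma_{\leq a}\kz_0$ identifies with $\field$ as a constant functor, but this is immediate from the elementary description of the augmentation map.
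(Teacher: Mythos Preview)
Your argument is correct and is precisely the reasoning the paper has in mind; the paper's own proof is simply ``Clear.'' You have merely unpacked the standard observation that brutal truncations of an acyclic complex produce resolutions of the relevant images/kernels, together with Proposition~\ref{prop:hom_Koszul_complex} for $i=0$.
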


\begin{proof}
	Clear.
\end{proof}

\begin{prop}
\label{prop:local_duality_truncated_koszul}
	For $V \in \obj \fdvs$ of rank $r$ and integers $0\leq a \leq b  \leq r$, there
is a natural isomorphism of complexes:
	\[
		\smodhom (\sigma_{\geq a} \kz_0 , S^\bullet \otimes \Lambda^r)
		\cong
		\sigma_{\leq r-a} \kz_0 (V)
	\]
in $\smodaut$ and, with respect to these isomorphisms, the surjection
$\sigma_{\geq a} \kz _0 \twoheadrightarrow \sigma_{\geq b} \kz_0$ induces the 
inclusion
$ \sigma_{\leq r-b }\kz_0(V) \hookrightarrow \sigma_{\leq r-a} \kz_0(V)$.

In particular, the surjection $\kz_0 \twoheadrightarrow \sigma_{\geq b } \kz_0$
induces the inclusion:
\[
\sigma_{\leq r-b} \kz_0 (V) \hookrightarrow	\kz_0 (V),
\]
which induces an isomorphism in degree zero homology if $b < r$.
\end{prop}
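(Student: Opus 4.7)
The plan is to unpack $\smodhom(\sigma_{\geq a}\kz_0, \Lambda^r \otimes S^\bullet)$ term by term, identify the dualized differential, and then check naturality with respect to the brutal truncations.

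First, I would identify the terms. For $a \leq j \leq r$, the position-$j$ term of $\sigma_{\geq a}\kz_0$ is $\Lambda^j \otimes S^{\bullet - (j-a)}$. Combining the free adjunction of Lemma \ref{lem:free_adjunction} with the twisting isomorphism $\Lambda^j(V)^\sharp \otimes \Lambda^r(V) \cong \Lambda^{r-j}(V)$ from Lemma \ref{lem:Lambda_twisted_iso} yields
\[
\smodhom\bigl(\Lambda^j \otimes S^{\bullet - (j-a)},\ \Lambda^r \otimes S^\bullet\bigr) \cong \Lambda^{r-j}(V) \otimes S^{\bullet + (j-a)}(V),
\]
and terms with $j > r$ vanish since $\Lambda^{r-j}(V) = 0$. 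Reindexing via $k = r - j$, these terms are precisely $\Lambda^k(V) \otimes S^{\bullet + ((r-a)-k)}(V)$ for $0 \leq k \leq r - a$, which is the termwise structure of $\sigma_{\leq r-a}\kz_0(V)$.

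Second, I would identify the differential. Proposition \ref{prop:dual_tau}, applied with $i = 0$, shows that $\smodhom(\tau_0, \Lambda^r \otimes S^\bullet)$ corresponds, after the twisting isomorphism, to the Koszul differential $\tau_0$ on the dualized terms. Since the differentials of $\sigma_{\geq a}\kz_0$ are merely restrictions of $\tau_0$ to the surviving terms, combining this with the first step establishes the claimed isomorphism of complexes $\smodhom(\sigma_{\geq a}\kz_0, \Lambda^r \otimes S^\bullet) \cong \sigma_{\leq r-a}\kz_0(V)$ in $\smodaut$.

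For naturality, the surjection $\sigma_{\geq a}\kz_0 \twoheadrightarrow \sigma_{\geq b}\kz_0$ (for $a \leq b$) is an isomorphism on terms with position $j \geq b$ and annihilates positions $a \leq j < b$. Applying the contravariant functor $\smodhom(-, \Lambda^r \otimes S^\bullet)$ and using the termwise identification above, the induced map realizes $\sigma_{\leq r-b}\kz_0(V)$ as the subcomplex of $\sigma_{\leq r-a}\kz_0(V)$ supported on the indices $k \leq r - b$; specializing $a = 0$ yields the inclusion $\sigma_{\leq r-b}\kz_0(V) \hookrightarrow \kz_0(V)$. Finally, since $b < r$ forces $r - b \geq 1$, the inclusion identifies (up to reindexing of $\bullet$) the rightmost two terms $\Lambda^1(V) \otimes S^\bullet(V) \stackrel{\tau_0}{\rightarrow} S^\bullet(V)$ in both complexes; the cokernel of $\tau_0$ is the augmentation quotient $\field$ in both cases, so the inclusion is an isomorphism in degree-zero homology (this can alternatively be seen via the $V$-evaluation of Lemma \ref{lem:local_cohom_calc_hz}(2) combined with exactness of the full Koszul complex). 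The principal obstacle is bookkeeping: tracking the $\aut(V)$-equivariance together with the interaction between internal $S^\bullet$-gradings and homological position under the contravariant functor $\smodhom$; once Lemmas \ref{lem:Lambda_twisted_iso} and Proposition \ref{prop:dual_tau} are applied consistently, the remainder is formal.
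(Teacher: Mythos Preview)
Your proposal is correct and follows essentially the same route as the paper: the paper's proof simply invokes Corollary~\ref{cor:dualizing_functor} (the full Koszul duality isomorphism), whose proof in turn rests on Proposition~\ref{prop:dual_tau} and the twisting isomorphism of Lemma~\ref{lem:Lambda_twisted_iso}; you have unpacked those same ingredients directly in the truncated setting rather than citing the corollary, which is a perfectly acceptable and arguably more transparent presentation.
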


\begin{proof}
	The result follows from Corollary \ref{cor:dualizing_functor}.
\end{proof}

\begin{rem}
It is useful to think of the surjection $\sigma_{\geq a} \kz_0 \twoheadrightarrow
\sigma_{\geq b}\kz_0$ as a morphism in an appropriate derived category
\[
	\kfrak_{a,\bullet}[a]
	\rightarrow
	\kfrak_{b,\bullet}[b],
	\]
where $[a], \ [b]$ correspond to  the shift in homological degree. In
particular, for $a=0$, this corresponds to $\field \rightarrow \kfrak_{b,\bullet}
[b]$.

The morphism $\sigma_{\geq a} \kz_0 \twoheadrightarrow \sigma_{\geq b}\kz_0$
induces a morphism between local cohomology groups (a generalized connecting morphism), via the
identification of local cohomology given in Proposition
\ref{prop:local_cohomology_functors} and  Proposition
\ref{prop:local_duality_truncated_koszul}.
\end{rem}

Using the above observation, one deduces:

\begin{lem}
\label{lem:connecting_hz_local_cohom}
	For $V \in \obj \fdvs$ of rank $r>1$, the connecting morphism
$H^0_I (\zed/2) \cong \zed/2 \rightarrow H^1 _I (\hz^* (BV))$ induced by the
short exact sequence of
	$\hz^* (BV_+)$-modules
	\[
	0
	\rightarrow
	\hz^* (BV)
	\rightarrow
	\hz ^* (BV_+)/2
	\rightarrow
	\zed/2
	\rightarrow
	0	
	\]
is non-trivial. 
\end{lem}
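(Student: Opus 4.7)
The plan is to derive non-triviality of the connecting morphism from the long exact sequence in local cohomology, reducing the problem to a purely algebraic vanishing statement.

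I would begin by writing out the long exact sequence
\[
\ldots \to H^0_I\bigl(\hz^*(BV_+)/2\bigr) \to H^0_I(\zed/2) \stackrel{\delta}{\to} H^1_I\bigl(\hz^*(BV)\bigr) \to \ldots
\]
associated to the short exact sequence in the statement. Since the augmentation ideal $I$ acts trivially on the quotient $\zed/2$, one has $H^0_I(\zed/2) \cong \zed/2$. Hence to prove that $\delta$ is non-trivial, it suffices to establish the vanishing $H^0_I\bigl(\hz^*(BV_+)/2\bigr) = 0$, which forces $\delta$ to be injective.

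To prove this vanishing, I would appeal to Lemma \ref{lem:Bockstein_exact_Q1_qfrak} applied to $Y = \Sigma^\infty BV_+$ (the hypotheses are supplied by Example \ref{exam:BV_2torsion}): the mod-$2$ reduction induces a ring monomorphism
\[
\hz^*(BV_+)/2 \hookrightarrow H\field^*(BV_+) \cong S^*(V^\sharp).
\]
Thus $\hz^*(BV_+)/2$ realizes as a graded subring of a polynomial ring and is, in particular, a graded integral domain. For any graded domain $R$ with non-trivial augmentation ideal, $H^0_I(R)$ vanishes: indeed, if $0 \neq m \in R$ satisfied $I^n m = 0$, then choosing any non-zero $y \in I$ would yield $y^n m = 0$, contradicting the domain property. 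The augmentation ideal is non-trivial here because the hypothesis $r > 1$ guarantees that $\hz^*(BV)$ contains non-zero elements in positive degree.

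There is essentially no obstacle in this argument; the substantive content is concentrated in the Bockstein injectivity, which supplies the required ring-theoretic embedding. A more intrinsic proof route, closer in spirit to the remark immediately preceding the lemma, would identify $\delta$ directly with the map in local cohomology induced by the surjection $\kz_0 \twoheadrightarrow \sigma_{\geq 1}\kz_0$ of Koszul complexes, via Propositions \ref{prop:local_cohomology_functors} and \ref{prop:local_duality_truncated_koszul}; this would offer an alternative viewpoint consistent with the functorial theme of the paper, but the domain argument above is more direct.
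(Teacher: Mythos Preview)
Your argument is correct and takes a more elementary route than the one the paper indicates. The paper deduces the lemma from the preceding remark, identifying the connecting morphism with the map in local cohomology induced by the surjection of truncated Koszul complexes $\kz_0 \twoheadrightarrow \sigma_{\geq 1}\kz_0$, via Propositions~\ref{prop:local_cohomology_functors} and~\ref{prop:local_duality_truncated_koszul}. You bypass this machinery entirely: embedding $\hz^*(BV_+)/2$ into the polynomial ring $S^*(V^\sharp)$ makes the vanishing of $H^0_I$ immediate by a domain argument, and the long exact sequence then forces $\delta$ to be injective. Your route is quicker and self-contained; the paper's route, which you correctly sketch at the end, buys an explicit description of $\delta$ in the functorial Koszul framework, which is what is ultimately exploited in the analysis of the spectral sequence.

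One minor imprecision in your citation: Lemma~\ref{lem:Bockstein_exact_Q1_qfrak} and Example~\ref{exam:BV_2torsion} are stated for the \emph{reduced} spectrum $\Sigma^\infty BV$, not $\Sigma^\infty BV_+$; for the unreduced spectrum the map $\hz^0(BV_+)=\zed \to \field$ is not injective, so condition~(1) of the lemma fails as written. What you actually need is that $\hz^*(BV_+)/2 \to H\field^*(BV_+)$ is injective, and this does hold: in degree zero it is the isomorphism $\zed/2 \to \field$, and in positive degrees the reduced cohomology $\hz^*(BV)$ is already $2$-torsion, so $\hz^*(BV)/2 = \hz^*(BV) \hookrightarrow H\field^*(BV)$ by the Example. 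This is a trivial repair, but worth stating cleanly.
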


 \begin{rem}
\label{rem:E1_hz}
The connecting morphism in the long exact sequence for local cohomology, (\ref{eqn:connecting_hz}), is therefore non-trivial.  For a conceptual presentation of the results, it is useful to define an associated $E^1$-page, so that this connecting morphism appears as the $d^1$ differential.
 \end{rem}

The local cohomology ($r>1$) is as follows, using Lemma
\ref{lem:local_cohom_calc_hz}:
\[
	H^j _I (\hz^* (BV_+) ) \cong
	\left\{
\begin{array}{ll}
	\zed & j=0
\\
0 & j=1\\
\field & 2 \leq j \leq r-1\\
H^r _I (\hz^* (BV)) & j=r.
\end{array}
	\right.
\]
Moreover, $H^r _I (\hz^* (BV))$ has a finite filtration such that
\[
	\mathrm{gr} H^r _I (\hz^* (BV)) \cong
	\field
	\oplus
	\mathrm{gr} \hz _* (BV),
\]
up to shift in degree, where the filtration on homology $\hz_* (BV)$ is induced
by the filtration $f_t S^\bullet$. 

The analysis of the local cohomology spectral sequence is straightforward; the local cohomology corresponds to the $E^2$-page of the spectral sequence. The
permanent cycles in the zero column are given by the subgroup $2^{r-1} \zed$;
 the differentials $d^i$, for $2 \leq i \leq r$ are all non-trivial, starting from the zero column, and serve to
eliminate the extraneous factors of $\field$ which occur above.

\begin{rem}
Heuristically it is useful to consider that  the differential $d^i$ is induced by
the surjection of complexes $\kz _0 \twoheadrightarrow \sigma_{\geq i} \kz_0$
for $i \geq 1$, using Remark \ref{rem:E1_hz} to interpret the connecting morphism as $d^1$.
\end{rem}

\subsection{Bicomplexes of $S^\bullet$-modules}
\label{subsect:bicomplexes}

The purpose of this section is to explain the calculation of the local cohomology of $\torsion ku^* (BV_+)$; a fundamental point is that the method also calculates the local cohomology of $\cotorsion ku^* (BV) / v$ and  explains all the differentials in the local cohomology spectral sequence. This relies on the following result, in which grading shifts have been suppressed and, for variance reasons, the cohomology of $V^\sharp$ is considered.

\begin{prop}
(Cf. \cite[Section 4.6]{bruner_greenlees}.)
For $V \in \obj \fdvs$ of rank $r$, $ku^* (BV_+^\sharp)$ admits a finite natural filtration with associated graded
$$\mathrm{gr} \torsion ku^* (BV^\sharp_+)
\cong 
\bigoplus_{i=2} ^ r \lfrak_{i, *} (V)$$
 in $\smodaut$. Moreover,  as a module over $S^\bullet(V)$:
\[
\torsion ku^* (BV^\sharp_+) 
\cong 
\bigoplus_{i=2} ^ r \lfrak_{i, *} (V).
\]
\end{prop}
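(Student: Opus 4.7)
The plan is to identify $\torsion ku^*(BV^\sharp_+)$ with the image $L_*(V)$ of $\qfrak$ in $H\zed^*(BV^\sharp_+)$, and then transport the Frobenius filtration of $S^\bullet$ studied in Section \ref{subsect:filter_symm_powers}. Combining Theorem \ref{thm:ku_cohom_BV} with Proposition \ref{prop:HZ_cohom_BV} (applied with $V$ replaced by $V^\sharp$) gives $\torsion ku^*(BV^\sharp_+) \cong (\mathrm{Im}\,\qfrak)^* \cong L_*(V)$ as a subfunctor of $H\zed^*(BV^\sharp_+)$. Reducing mod $2$ embeds $L_*(V)$ in $H\field^*(BV^\sharp_+) \cong S^*(V)$, and under this embedding the $S^\bullet(V)$-action on $\torsion ku^*$ coincides with the natural action on $L_*(V) \subset S^*(V)$ mediated by the Frobenius $\Phi$, since $\torsion ku^*$ is annihilated by $v$ and, by the structure given in Theorem \ref{thm:ku_cohom_BV}, essentially $2$-torsion so that only Frobenius squares from $H\zed^*$ act non-trivially.

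Next, I would apply the filtration $f_\bullet S^\bullet$ to $L_*$; this induces a filtration of $L_*(V)$ which is finite of length at most $r$ by Lemma \ref{lem:filt_S}(3), and by Lemma \ref{lem:associated_graded_L} its associated graded is $\bigoplus_{i\geq 2}\lfrak_{i,*}$. The terms with $i>r$ vanish after evaluation on $V$ because $\Lambda^i(V)=0$, so that
\[
\mathrm{gr}\,L_*(V)\;\cong\;\bigoplus_{i=2}^r\lfrak_{i,*}(V)
\]
in $\smodaut$, which establishes the first (filtration) statement of the proposition.

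Finally, to upgrade this to a genuine splitting $L_*(V)\cong\bigoplus_{i=2}^r\lfrak_{i,*}(V)$ as $S^\bullet(V)$-modules (not $\aut(V)$-equivariantly, as already flagged in the remark following Lemma \ref{lem:associated_graded_L}), I would follow the monomial strategy underlying Lemma \ref{lem:filt_S}(2): after fixing a basis $x_1,\dots,x_r$ of $V$, the monomial basis of $S^\bullet(V)$ is stratified by the subset $\{j : x_j\text{ appears with odd exponent}\}$, and this stratification realises $S^\bullet(V)\cong\bigoplus_t(f_t/f_{t-1})(V)$ as a module over $\Phi S^\bullet(V)$. Restricting this stratification to $L_*(V)=\mathrm{Im}\,Q_1$ and partitioning its induced monomial basis according to the odd-exponent strata produces the required splitting.

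The principal obstacle is this last step: one must check that the chosen splitting is $S^\bullet(V)$-linear with respect to the Frobenius action, and that the resulting summands match the abstract subquotients $\lfrak_{i,*}(V)$. The compatibility rests on the fact that $Q_1$ commutes with Frobenius squares (Lemma \ref{lem:Q_i_commute}) and that the Koszul-type differentials $\tau_1$ governing the $\lfrak_{i,*}$ arise precisely as the associated graded of $Q_1$ for the Frobenius filtration (Proposition \ref{prop:Q_filter_tau}); careful bookkeeping on monomials, together with these two facts, should yield the splitting as an $S^\bullet(V)$-module.
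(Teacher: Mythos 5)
Your proposal is correct and follows essentially the same route as the paper, which simply cites Theorem \ref{thm:ku_cohom_BV} (giving $\torsion ku^*(BV^\sharp_+)\cong\mathrm{Im}\,\qfrak\cong L_*(V)$ via Proposition \ref{prop:HZ_cohom_BV}) together with Lemma \ref{lem:associated_graded_L}, with the $S^\bullet(V)$-module splitting supplied by the monomial argument of Lemma \ref{lem:filt_S} exactly as you describe. Your elaboration of the Frobenius-twisted module structure and the vanishing of $\lfrak_{i,*}(V)$ for $i>r$ just makes explicit what the paper leaves implicit.
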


\begin{proof}
 The result follows from Lemma \ref{lem:associated_graded_L} and Theorem \ref{thm:ku_cohom_BV}.
\end{proof}

For the consideration of local duality, it is necessary to consider the
$S^\bullet$-action on the $\Lambda^a \otimes S^b $-bicomplex introduced in
Section \ref{subsect:filter_symm_powers}. This gives a half plane bicomplex with
differentials of the form
\[
	\xymatrix{
& \ar@{.>}[d] & \ar@{.>}[d]
\\
\ar@{.>}[r]
&
\Lambda^{j+2} \otimes S^{\bullet -3}
\ar[r]^{\tau_0}
\ar[d]_{\tau_1}
&
\Lambda^{j+1} \otimes S^{\bullet -2}
\ar[d]^{\tau_1}
\ar@{.>}[r]
&&&
\\
\ar@{.>}[r]
&
\Lambda^{j+1} \otimes S^{\bullet -1}
\ar[r]_{\tau_0}\ar@{.>}[d]
&
\Lambda^{j} \otimes S^{\bullet}
\ar@{.>}[r]\ar@{.>}[d]
& & &
\ar[l]^s \ar[u]_t\\
&&&&
}
\]
in the $(s,t)$-plane.

Consider the following brutal truncations, which are analogues of the truncated
Koszul complexes of Notation \ref{nota:truncated_koszul}.

\begin{defn} (Cf.  \cite[Section 4.6]{bruner_greenlees})
For $i\in \nat$, let
\begin{enumerate}
	\item 
$\bicom(i)$ be the bicomplex in $\smodf$:
\[
 \bicom(i)_{s,t} :=
\left\{
\begin{array}{ll}
 0 & t < i \ \mathrm{or} \ s <0;\\
\Lambda ^{s+t} \otimes S^\bullet & t \geq i,
\end{array}
\right.
\]
considered as a quotient bicomplex, where the term of lowest total degree is
$\Lambda^i \otimes S^\bullet$, in bidegree $(0,i)$.
\item
$\dbicom(i)$ be the bicomplex in $\smodf$:
\[
 \dbicom(i)_{s,t} :=
\left\{
\begin{array}{ll}
 0 & t > i \ \mathrm{or} \ s >0;\\
\Lambda ^{s+t} \otimes S^\bullet & t \leq i,
\end{array}
\right.
\]
considered as a sub-bicomplex, where the term of greatest total degree is
$\Lambda^i \otimes S^\bullet$, in bidegree $(0,i)$.
\end{enumerate}
\end{defn}

\begin{rem}
 When evaluated on $V \in \obj \fdvs$ of rank $r$, the only non-trivial terms
are those with $s +t \leq r$ and $t \geq i$. In particular, $\bicom(i)(V)$ is trivial if $i> r$.
\end{rem}

\begin{exam}
 Taking $r=5$ and $i=2$, $\bicom(2) (\field^5)$ is given by evaluating the
following
bicomplex on $\field^5$:
\[
 \xymatrix{
&&&\Lambda^5\otimes S^{*-6}
\ar[d]^{\tau_1}
\\
&& \Lambda^5\otimes S^{*-5}
\ar[d]^{\tau_1}
\ar[r]^{\tau_0}
&
\Lambda^4 \otimes S^{*-4}
\ar[d]^{\tau_1}
\\
&
\Lambda^5 \otimes S^{*-4}
\ar[d]^{\tau_1}
\ar[r]^{\tau_0}
&
\Lambda^4 \otimes S^{*-3}
\ar[d]^{\tau_1}
\ar[r]^{\tau_0}
& \Lambda^3  \otimes S^{*-2}
\ar[d]^{\tau_1}&&
\\
\Lambda^5 \otimes S^{*-3}
\ar[r]^{\tau_0}
&
\Lambda^4 \otimes S^{*-2}
\ar[r]^{\tau_0}
&
\Lambda^3  \otimes S^{*-1}
\ar[r]^{\tau_0}
&
\Lambda^2 \otimes S^{*} &
&
\ar[l]^s
\ar[u]_t
}
\]
where $\Lambda^2 \otimes S^*$ is in $(s,t)$-degree $(0,2)$.

Similarly, $\dbicom (2) (\field^5) $ is obtained by evaluating the following on
$\field^5$:
\[
	\xymatrix{
\Lambda^2 \otimes S^*
\ar[r]^{\tau_0}
\ar[d]_{\tau_1}
&
\Lambda^1 \otimes S^{*+1}
\ar[r]^{\tau_0}
\ar[d]_{\tau_1}
&
S^{*+2}
\\
\Lambda^1 \otimes S^{*+2}
\ar[r]^{\tau_0}
\ar[d]_{\tau_1}
&
\Lambda^1 \otimes S^{*+3}
\\
S^{*+4}.
	}
\]

\end{exam}

\begin{rem}
\ 
\begin{enumerate}
	\item
The grading on $\bicom (i)$ used in \cite[Chapter 4]{bruner_greenlees}
(respectively on $\dbicom (i)$)  can be recovered
by considering the grading of the `generators' in the lowest (resp. greatest)
total degree, since the morphism $\tau_0$ raises the degree by $2$ and $\tau_1 $
raises the degree by $4$ (the grading is
calculated relative to the $S^\bullet$-grading, so that the usual gradings of the odd degree
generators do not contribute).
	\item
	The homology of the bicomplexes is calculated pointwise,  by
first evaluating on $V\in \obj \fdvs$; for any $i$ and $V$,  the bicomplex $\bicom (i) (V)$ has only
finitely many non-zero terms. 
\end{enumerate}
\end{rem}

The following is clear from the definition.

\begin{lem}
\label{lem:filter-bicomplex}
\
\begin{enumerate}
	\item
	 There are  surjections of bicomplexes in $\smodf$:
\[
	\bicom (0)
	\twoheadrightarrow
	\bicom(1)
	\twoheadrightarrow
	\ldots
	\twoheadrightarrow
	 \bicom(i)
	 \twoheadrightarrow
	 \bicom(i+1)
	 \twoheadrightarrow
	 \ldots
\]
and,  the kernel of $\bicom(i)
	 \twoheadrightarrow
	 \bicom(i+1)$
	 is the truncated Koszul complex $\sigma_{\geq i} \kz_0$:
\[
\ldots \rightarrow  \Lambda^{i+s} \otimes S^{*-s} \stackrel{\tau_0}{\rightarrow}
\Lambda^{i+s-1} \otimes S^{*-s+1} \rightarrow \ldots \rightarrow \Lambda^i
\otimes S^*
\]
concentrated in $t$-degree $i$.
\item
There are inclusions of bicomplexes:
\[
	\dbicom (0) = S^\bullet \rightarrow \dbicom (1) \hookrightarrow \dbicom
(2) \hookrightarrow \ldots \hookrightarrow \dbicom (j-1) \hookrightarrow \dbicom (j)
\hookrightarrow \ldots
\]
and the cokernel of $\dbicom (j-1) \hookrightarrow \dbicom (j)$ is the truncated
Koszul complex $\sigma_{\leq j} \kz_0$ shifted so the term of maximal total
degree is in bidegree $(0,j)$.
\end{enumerate}
\end{lem}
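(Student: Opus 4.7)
The plan is to unwind the definitions of $\bicom(i)$ and $\dbicom(i)$; the statements then follow by direct inspection, so the only real content is to verify that the indicated maps respect the bicomplex differentials and to identify the kernels/cokernels.

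For part (1), I would define the map $\bicom(i) \twoheadrightarrow \bicom(i+1)$ as the identity on the common part (bidegrees with $t \geq i+1$ and $s \geq 0$) and zero on the strip $t = i$. Since $\tau_0$ preserves the $t$-coordinate and $\tau_1$ raises $t$ by $1$, collapsing the bottom row cannot destroy commutativity of either differential: any arrow landing in $t \geq i+1$ either starts in $t \geq i+1$ (preserved), or starts in $t = i$ via $\tau_1$ (and the target is killed in the source and so the square commutes trivially on that piece). Thus the projection is a well-defined map of bicomplexes. Its kernel is concentrated in $t$-degree exactly $i$, consists of $\Lambda^{i+s} \otimes S^{\bullet}$ in bidegree $(s,i)$ for $s \geq 0$, and carries the restriction of $\tau_0$ (the restriction of $\tau_1$ is forced to be zero because its image lies outside the kernel). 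This is exactly the truncated Koszul complex $\sigma_{\geq i} \kz_0$ of Notation \ref{nota:truncated_koszul}, placed in row $t=i$.

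For part (2), I would dually define the inclusion $\dbicom(j-1) \hookrightarrow \dbicom(j)$ as the identity on the common part $t \leq j-1$ and zero above; again the arguments for $\tau_0$ and $\tau_1$ commuting go through by direct inspection, using now that $\tau_1$ maps out of $t=j$ into larger $t$, which is zero in both source and target. The cokernel is concentrated in $t = j$, with entries $\Lambda^{j+s} \otimes S^\bullet$ for $s \leq 0$, differential $\tau_0$ (the $\tau_1$ out of $t = j$ lands in zero), which is precisely $\sigma_{\leq j} \kz_0$ after the indicated shift so that the term $\Lambda^j \otimes S^\bullet$ sits in bidegree $(0,j)$.

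There is no genuine obstacle here; the only mild subtlety worth flagging is the direction convention (the bicomplex lives in the quadrant $s \geq 0, \, t \geq 0$ for $\bicom$ and in $s \leq 0,\, t \geq 0$ for $\dbicom$, so ``bottom row'' and ``top row'' have the expected meanings), and the observation that because $\tau_1$ shifts $t$-degree in the correct direction, its restriction to each row-kernel (or row-cokernel) automatically vanishes, leaving only the horizontal Koszul differential $\tau_0$ on the subquotient. Chaining the maps in $i$ (respectively $j$) gives the asserted sequence of surjections (respectively inclusions).
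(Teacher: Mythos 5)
Your overall strategy is exactly what the paper intends (its proof is simply ``clear from the definition''), and your identification of the kernel of $\bicom(i)\twoheadrightarrow\bicom(i+1)$ as the row $t=i$ carrying only $\tau_0$, and of the cokernel of $\dbicom(j-1)\hookrightarrow\dbicom(j)$ as the row $t=j$, is correct. However, there is one concrete error in your justification: in the paper's conventions $\tau_1$ \emph{lowers} the $t$-coordinate by $1$ (the vertical arrows in the displayed bicomplex point towards decreasing $t$; e.g.\ in $\bicom(2)(\field^5)$ the map $\Lambda^5\otimes S^{*-6}\to\Lambda^4\otimes S^{*-4}$ goes from bidegree $(0,5)$ to $(0,4)$), not raises it as you assert. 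This is not a cosmetic point: it is precisely because $\tau_1$ lowers $t$ that the bottom row $t=i$ of $\bicom(i)$ is a \emph{sub}-bicomplex (its image under $\tau_1$ lies in $t=i-1$, which is already zero in $\bicom(i)$), so that collapsing it yields a well-defined surjection of bicomplexes with that row as kernel. Under the orientation you state, the row $t=i$ would map nontrivially into $t=i+1$, your square would \emph{not} commute (your parenthetical ``the target is killed in the source'' fails, since $t=i+1$ survives the projection), and $\bicom(i+1)$ would be a subobject rather than a quotient of $\bicom(i)$ --- contradicting part (1). Dually, for part (2) the relevant point is that $\tau_1$ maps the row $t=j$ into $t=j-1$, which dies in the cokernel, not into ``larger $t$''. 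With the orientation corrected, every step of your verification goes through and agrees with the paper.
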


The following result follows from the definition of the objects $\kfrak_{a,b}$ and $\lfrak_{a,b}$ given in Section \ref{sect:milnor}.

\begin{lem}
\label{lem:lfrak_smodf}
	For $0<a\in \zed$, $\kfrak_{a,\bullet}$ and  $\lfrak_{a,\bullet}$ are
objects of $\smodf$ such that  the surjections $\Lambda^a \otimes S^\bullet
\twoheadrightarrow \kfrak_{a,\bullet}\twoheadrightarrow \lfrak_{a, \bullet}$ are
morphisms of $\smodf$.
\end{lem}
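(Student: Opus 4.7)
The plan is to observe that this lemma is essentially formal once one recognizes that the Koszul differentials $\tau_0$ and $\tau_1$ are morphisms in the abelian category $\smodf$; the statement then reduces to the fact that images and cokernels of $S^\bullet$-module morphisms are themselves $S^\bullet$-modules, with the structural surjections being $S^\bullet$-linear.

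First I would check that, for every $i,j$, the Koszul differential
\[
\tau_i : \Lambda^j \otimes S^\bullet \rightarrow \Lambda^{j-1}\otimes S^{\bullet+p^i}
\]
lies in $\smodf$. This is immediate from its construction: it is the image under the monoidal left adjoint $-\otimes S^\bullet : \f \rightarrow \smodf$ of Lemma \ref{lem:S-modules} of the $\f$-morphism $\Lambda^j \stackrel{\Delta}{\rightarrow}\Lambda^{j-1}\otimes \Lambda^1 \stackrel{1\otimes \tau_i}{\rightarrow}\Lambda^{j-1}\otimes S^{p^i}$, so it is free-module-induced and hence automatically $S^\bullet$-linear. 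Because $\smodf$ is abelian (Lemma \ref{lem:S-modules}), the image $\kfrak_{a,\bullet}$ of $\tau_0$ inherits a canonical $S^\bullet$-module structure, expressed simultaneously as a quotient of $\Lambda^a \otimes S^\bullet$ and as a subobject of $\Lambda^{a-1}\otimes S^{\bullet+1}$; in particular the surjection $\Lambda^a \otimes S^\bullet \twoheadrightarrow \kfrak_{a,\bullet}$ is a morphism of $\smodf$.

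Next, because $\tau_0$ and $\tau_1$ commute (Lemma \ref{lem:Q_i_commute}), the map $\tau_1$ restricts to an $S^\bullet$-linear morphism
\[
\tau_1 : \kfrak_{a+1,\bullet-2}\longrightarrow \kfrak_{a,\bullet},
\]
which is exactly the restriction considered in Lemma \ref{lem:tau1_diff_J}: the source lies in $\kfrak_{a+1,\bullet-2}\subset \Lambda^a\otimes S^{\bullet-1}$ as a sub-$S^\bullet$-module and the target $\kfrak_{a,\bullet}\subset \Lambda^{a-1}\otimes S^{\bullet+1}$ receives the image by the commutation. Taking its cokernel in $\smodf$ then gives $\lfrak_{a,\bullet}$ as an object of $\smodf$, together with the quotient morphism $\kfrak_{a,\bullet}\twoheadrightarrow \lfrak_{a,\bullet}$ in $\smodf$.

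There is no real obstacle here: the conceptual content of the lemma is exhausted by the abelianness of $\smodf$ and the commutation of $\tau_0$ with $\tau_1$, and the only care required is the bookkeeping of grading shifts (recall $\tau_i$ shifts the $S^\bullet$-grading by $p^i$), which aligns so that the displayed composition of surjections $\Lambda^a \otimes S^\bullet \twoheadrightarrow \kfrak_{a,\bullet}\twoheadrightarrow \lfrak_{a,\bullet}$ is a well-defined composition of $\smodf$-morphisms in the grading convention of Section \ref{sect:milnor}.
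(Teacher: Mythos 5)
Your argument is correct and is exactly the route the paper takes implicitly (the paper offers no proof, regarding the lemma as immediate from the definitions): $\tau_i$ is free-module-induced hence lies in $\smodf$, abelianness of $\smodf$ gives $\kfrak_{a,\bullet}$ as an image with its structural surjection, and the commutation of $\tau_0$ with $\tau_1$ lets $\tau_1$ restrict to the $\kfrak$'s so that $\lfrak_{a,\bullet}$ is a cokernel in $\smodf$. The only nitpick is the citation: the commutation of the Koszul differentials is more directly Lemma \ref{lem:tau1_diff_J} together with Proposition \ref{prop:Q_filter_tau} (the bicomplex structure on $\Lambda^\bullet\otimes S^\bullet$) rather than Lemma \ref{lem:Q_i_commute} itself, which concerns the $Q_i$ on $S^*$.
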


To establish the behaviour of the indices, consider the following commutative diagram (for $a \geq 2$):
\[
 \xymatrix{
\Lambda^{a+1} \otimes S^{\bullet -2} 
\ar[d]_{\tau_1}
\ar@{->>}[r]
&
\kfrak_{a+1, \bullet -2}
\ar[d]
\\
\Lambda^a \otimes S^\bullet 
\ar@{->>}[r]
\ar[dd]_{\tau_1}
&
\kfrak_{a,\bullet}
\ar@{->>}[d]
\\
&
\lfrak_{a, \bullet}
\ar@{^(->}[d]
\\
\Lambda^{a-1} \otimes S^{\bullet+2} 
\ar@{->>}[r] 
\ar@/_2pc/[rr]^{\tau_0}
& \kfrak_{a-1, \bullet +2}
\ar@{^(->}[r]
&
\Lambda^{a-2} \otimes S^{\bullet+3} .
}
\]
In particular, $\lfrak_{a, b}$ is a quotient of $\Lambda^a \otimes S^b $ and a subobject of $\Lambda^{a-2} \otimes S^{b+3}$.

In the following Proposition, recall the indexing convention of $\bicom(i)$, 
which
means that the term of lowest total degree is in $(s,t)$-degree $(0,i)$, hence
has total degree $i$.

\begin{prop}
\label{prop:hom_bicom}
 For $0< i\in \zed$, the homology of $\mathrm{Tot} (\bicom(i))$ is
concentrated in degree $i$ and $
 H_i (\mathrm{Tot} (\bicom(i))) \cong
 \lfrak_{i, *}.
$
\end{prop}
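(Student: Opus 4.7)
The plan is to compute $H_*(\mathrm{Tot}(\bicom(i)))$ via the spectral sequence of the bicomplex, filtering by the horizontal degree $s$ so that $E^1$ is obtained by first taking $\tau_0$-homology row by row. Since $\bicom(i)$ is supported in $s \geq 0$, $t \geq i$, for each total degree $n$ the indices with $s+t=n$, $s \geq 0$, $t \geq i$ form a finite set, so the induced filtration on each total-degree strip has only finitely many steps and convergence is immediate.

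First I identify each row. For fixed $t \geq i$, the row is, up to a uniform shift of the internal $S^\bullet$-grading, the truncated Koszul complex $\sigma_{\geq t}\kz_0$ of Notation \ref{nota:truncated_koszul}. By Lemma \ref{lem:local_cohom_calc_hz}(1), this is an $S^\bullet$-free resolution of $\kfrak_{t, \bullet}$; hence its homology is concentrated at $s=0$ with value $\kfrak_{t, \bullet}$. So $E^1$ lives in the single column $s=0$, with $E^1_{0,t} \cong \kfrak_{t, \bullet}$ for $t \geq i$, and the induced $d^1$ (coming from $\tau_1$) yields the single-column complex
\[
\ldots \xrightarrow{\tau_1} \kfrak_{i+2, \bullet-4} \xrightarrow{\tau_1} \kfrak_{i+1, \bullet-2} \xrightarrow{\tau_1} \kfrak_{i, \bullet} \to 0.
\]

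Next I compute its homology. This is precisely the truncation, at the term $\kfrak_{i, \bullet}$, of the complex considered in Proposition \ref{prop:J-complex_homology}. That proposition states that the full complex (running down to $\kfrak_{1, \bullet}$) has homology concentrated at its rightmost term; in particular it is exact at every position $\kfrak_{n, \cdot}$ with $n > 1$, and so our truncated version is exact at every $\kfrak_{n, \cdot}$ with $n > i$. The only surviving homology sits at the rightmost term $\kfrak_{i, \bullet}$, where by definition it is the cokernel of the incoming $\tau_1$, namely $\lfrak_{i, \bullet}$. Hence $E^2$ is concentrated at bidegree $(0, i)$ with value $\lfrak_{i, *}$; for degree reasons all further differentials vanish, the sequence collapses, and the total homology of $\bicom(i)$ is $\lfrak_{i, *}$ concentrated in total degree $i$.

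The main point requiring care is the grading bookkeeping: verifying that the rows really are copies of $\sigma_{\geq t}\kz_0$ (with the appropriate internal $S^\bullet$-shift at each height $t$) and that the resulting $d^1$-column matches the truncated version of the complex of Proposition \ref{prop:J-complex_homology} with the right shift in the $b$-parameter. Once these identifications are in place, the two structural inputs—Lemma \ref{lem:local_cohom_calc_hz}(1) and Proposition \ref{prop:J-complex_homology}—combine to deliver the result without further computation.
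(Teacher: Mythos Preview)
Your argument is essentially the paper's own: filter $\bicom(i)$ by rows (this is exactly the filtration supplied by Lemma \ref{lem:filter-bicomplex}, whose successive subquotients are the truncated Koszul complexes $\sigma_{\geq t}\kz_0$), identify the $\tau_0$-homology of each row with $\kfrak_{t,\bullet}$, and then invoke Proposition \ref{prop:J-complex_homology} on the resulting $\tau_1$-column to obtain $\lfrak_{i,*}$ in total degree $i$. The only slip is terminological: you write ``filtering by the horizontal degree $s$'', but the computation you actually carry out---first taking $\tau_0$-homology of each row, then applying $\tau_1$---is the filtration by the \emph{vertical} degree $t$ (since $\tau_0$ is the differential preserving $t$, while $\tau_1$ preserves $s$); filtering by $s$ would force you to take $\tau_1$-homology first. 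With that correction, your proof is correct and matches the paper's approach.
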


\begin{proof}
 The result follows by filtering the bicomplex $\bicom(i)$ using Lemma
\ref{lem:filter-bicomplex} and Proposition \ref{prop:J-complex_homology}
(cf. \cite[Proposition 4.6.3]{bruner_greenlees}).
\end{proof}

\begin{rem}
	Although the bicomplex $\bicom(0)$ is defined, it does not give a
resolution of $\field$, since the homology is not concentrated in
a single homological degree - which is a consequence of the homology of
$(\Lambda^\bullet \otimes S^\bullet, \tau_0)$. 
\end{rem}

\begin{prop}
\label{prop:hom_dbicom}
	For  $i \in \zed$, the homology of $\mathrm{Tot} (\dbicom(i))$ is as follows. 

For $i=0$:
$$
 H_m (\mathrm{Tot} (\dbicom(0)))\cong  
\left\{
\begin{array}{ll}
  S^\bullet & m=0\\
0 & \mathrm{otherwise}.
\end{array}
\right.
$$

For $i >0$: 
\begin{eqnarray*}
	H_m (\mathrm{Tot} (\dbicom(i))) &\cong &
\left\{
\begin{array}{ll}
\field ^{\oplus {i+1}} \oplus \bigoplus_{d\geq 1}p_d \Ibar & m =0 \\
 \lfrak_{i+2, *} & m=i\\
0& \mathrm{otherwise}.
\end{array}
\right.
\end{eqnarray*}

Moreover, for $i >1$, the short exact sequence of bicomplexes $0 \rightarrow \dbicom (i-1) \rightarrow \dbicom (i) \rightarrow \sigma_{\leq i} \kz_0 \rightarrow 0$ given by Lemma \ref{lem:filter-bicomplex}, induces the short exact sequences 
\[
 \xymatrix{
H_0 (\mathrm{Tot} (\dbicom(i-1)))
\ar[r]
\ar[d]_\cong 
&
H_0 (\mathrm{Tot} (\dbicom(i)))
\ar[r]
\ar[d]_\cong 
&
H_0 (\sigma_{\leq i}\kz_0)
\ar[d]^\cong 
\\
\field ^{\oplus {i}} \oplus \bigoplus_{d\geq 1}p_d \Ibar
\ar[r]
&
\field ^{\oplus {i+1}} \oplus \bigoplus_{d\geq 1}p_d \Ibar
\ar[r]
&
\field
}
\]
and 
\[
\xymatrix{
H_i (\mathrm{Tot} (\dbicom(i)))
\ar[r]
\ar[d]_\cong 
&
H_i (\mathrm{Tot} (\sigma_{\leq i}\kz_0)) 
\ar[r]
\ar[d]_\cong 
&
H_{i-1} (\mathrm{Tot} (\dbicom(i-1))) 
\ar[d]^\cong 
\\
\lfrak_{i+2, *}
\ar[r]
&
\kfrak_{i+1, *}
\ar[r]
&
\lfrak_{i+1, *}
} 
\]
in homology.
\end{prop}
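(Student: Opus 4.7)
I plan to argue by induction on $i$, applied to the short exact sequence of bicomplexes $0 \to \dbicom(i-1) \to \dbicom(i) \to \sigma_{\leq i}\kz_0 \to 0$ from Lemma~\ref{lem:filter-bicomplex} and the resulting long exact sequence in homology. The base case $i=0$ is immediate, since $\dbicom(0)$ consists of the single entry $S^\bullet$ in bidegree $(0,0)$, giving $H_0=S^\bullet$ and vanishing otherwise.

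For the inductive step, I will combine the inductive hypothesis on $\dbicom(i-1)$ with Lemma~\ref{lem:local_cohom_calc_hz}, which gives $H_0(\sigma_{\leq i}\kz_0) = \field$ and $H_i(\sigma_{\leq i}\kz_0) = \kfrak_{i+1,*}$. All three complexes in the short exact sequence then have homology concentrated in degree $0$ and their respective top degree, so the long exact sequence forces $H_m(\dbicom(i)) = 0$ in intermediate degrees. The calculations of $H_0$ and $H_i$ both reduce to identifying the connecting homomorphism $\delta: \kfrak_{i+1,*} \to H_{i-1}(\dbicom(i-1))$.

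The heart of the argument is the identification of $\delta$ by a standard diagram chase: a cycle in the top term $\Lambda^i \otimes S^\bullet$ of $\sigma_{\leq i}\kz_0$ lifts to the same term of $\dbicom(i)$, and the cycle condition kills the $\tau_0$-component of its total differential, so $\delta$ is computed by $\tau_1$ followed by projection to $H_{i-1}(\dbicom(i-1))$. Using Lemma~\ref{lem:tau1_diff_J} and the defining cokernel presentation of $\lfrak_{i+1,*}$, I expect $\delta$ to factor as the canonical surjection $\kfrak_{i+1,*} \twoheadrightarrow \lfrak_{i+1,*}$ followed by the natural embedding of $\lfrak_{i+1,*}$ into $H_{i-1}(\dbicom(i-1))$. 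Given this, $H_i(\dbicom(i)) = \mathrm{Ker}(\delta) \cong \lfrak_{i+2,*}$ by Proposition~\ref{prop:J-complex_homology} and Lemma~\ref{lem:ker_ident_lfrak}. The degree-zero calculation then follows from the complementary short exact sequence $0 \to H_0(\dbicom(i-1))/\mathrm{Im}(\delta) \to H_0(\dbicom(i)) \to \field \to 0$, which splits because the generator of $H_0(\sigma_{\leq i}\kz_0)$ lifts to a constant-functor summand at an appropriate $S$-grade; iterating and invoking Proposition~\ref{prop:J-complex_homology} to identify each $S^d/\mathrm{Im}(\tau_1)$ with $p_d\Ibar$ will yield the stated form $\field^{\oplus(i+1)} \oplus \bigoplus_{d \geq 1} p_d \Ibar$. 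The two short exact sequences displayed in the Proposition are then precisely the pieces of the long exact sequence at degrees $0$ and $i$.

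The main obstacle will be the diagram-chase identification of $\delta$: this requires careful tracking of the internal $S^\bullet$-grading shifts between the various $\Lambda^{s+t} \otimes S^\bullet$ entries of $\dbicom(i)$, and reconciliation of the two natural descriptions of $\lfrak_{a,b}$ (as cokernel from above versus kernel from below, under Lemma~\ref{lem:ker_ident_lfrak}). Additionally, the transition from the base case $i=0$ to $i=1$ deserves separate attention, since $H_0(\dbicom(0)) = S^\bullet$ does not admit the decomposition $\field \oplus \bigoplus_{d \geq 1} p_d \Ibar$ directly; Proposition~\ref{prop:J-complex_homology} is needed to identify each quotient $S^d/\lfrak_{2, d-3}$ with $p_d \Ibar$, initialising the pattern of $p_d\Ibar$-summands that is then preserved under the inductive step.
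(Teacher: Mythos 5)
Your proposal is correct and takes essentially the same route as the paper: the paper's (very terse) proof computes the homology by peeling off the rows of $\dbicom(i)$ exactly as in your induction via $0 \to \dbicom(i-1) \to \dbicom(i) \to \sigma_{\leq i}\kz_0 \to 0$, with Proposition \ref{prop:J-complex_homology} supplying what you extract from the connecting map (namely that $\delta$ is $\tau_1$, hence the canonical surjection $\kfrak_{i+1,*} \twoheadrightarrow \lfrak_{i+1,*}$, with kernel $\lfrak_{i+2,*}$), and the fact that each row contributes a $\field$ to $H_0$. The splitting of the degree-zero extension that you flag is handled as you suggest, by the distinct internal gradings of the $\field$ and $p_d\Ibar$ summands (equivalently, by projectivity of the constant functor).
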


\begin{proof}
	The calculation of the homology follows from Proposition \ref{prop:J-complex_homology},
together with the fact that each row of $\dbicom(i)$, considered as a truncated Koszul complex, 
contributes a factor $\field$ in homological degree $0$.

For $i >1$, the given short exact sequences follow immediately; the exactness of the second sequence is again a consequence of Proposition \ref{prop:J-complex_homology}.
\end{proof}

\begin{rem}
\ 
\begin{enumerate}
 \item 
The factors $\field$ (resp. the different factors $p_d \Ibar$) lie in distinct gradings.
\item
The degree $r-1$ homology of $\dbicom (r-1)$ is the functor $\lfrak_{r+1, *}$, which is a quotient of $\Lambda^{r+1} \otimes S^\bullet$. In particular, when evaluated on the rank $r$ space $V$, this is trivial. Thus, the homology of $\dbicom (r-1)$ evaluated on $V$ is concentrated in degree zero.
\end{enumerate}
\end{rem}

The following result is proved by using the identification of local cohomology given by Proposition \ref{prop:local_cohomology_functors}.  Here $D$ denotes the (graded) duality functor and the identification $D p_d \Ibar  \cong q_d \Pbar$ is used to give the duality statement $ D \lfrak_{1,*} \cong \bigoplus_{d>0} q_d \Pbar $; all functors should be understood as being evaluated on $V$.

\begin{cor}
\label{cor:local_cohom_lfrak}
 (Cf. \cite[Theorem 4.7.3]{bruner_greenlees}.)
For $V \in \obj \fdvs$ of rank $r >1$, the local cohomology of $\lfrak_{i,*}$ 
\begin{enumerate}
 \item 
for $i=1$,  is concentrated in cohomological degree one and
 $
 H^1 _ I (\lfrak_{1,*} ) \cong \field ^{\oplus r} \oplus D \lfrak_{1,*};
$
\item
for $2 \leq i \leq r-1$:
\begin{eqnarray*}
 H^m _I (\lfrak_{i, *} ) \cong 
\left\{
\begin{array}{ll}
 \field^{\oplus r-i +1 } \oplus  D \lfrak_{1,*}& m=i \\
 D \lfrak _{r-i+2, *} & m=r \\
0 & \mathrm{otherwise};
\end{array}
\right.
\end{eqnarray*}
\item
for $i=r$,  is concentrated in cohomological degree $r$ and
$
 H^r _I (\lfrak_{r,*} ) \cong D S^\bullet.
$
\end{enumerate}
Moreover, the surjection of complexes $\bicom (1) \twoheadrightarrow \bicom (i)$, for $1 < i < r$, induces a surjection:
 $
 H^1 _I (\lfrak_{1,*} ) \twoheadrightarrow H^i_I (\lfrak_{i,*}) 
$ 
with kernel $\field^{\oplus i-1}$.
\end{cor}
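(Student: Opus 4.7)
The plan is to construct an $S^\bullet$-free resolution of $\lfrak_{i,*}$ using the bicomplex $\bicom(i)$, apply Proposition \ref{prop:local_cohomology_functors} via the dualizing functor of Corollary \ref{cor:dualizing_functor}, and identify the dualized complex with $\dbicom(r-i)(V)$ so that the homology computation of Proposition \ref{prop:hom_dbicom} delivers the answer. First I would set $F_k := \mathrm{Tot}(\bicom(i))_{i+k}$ for $0 \leq k \leq r-i$: Proposition \ref{prop:hom_bicom} identifies $H_i(\mathrm{Tot}(\bicom(i)))$ with $\lfrak_{i,*}$ and shows all other homology vanishes, so augmenting by the canonical surjection $F_0 = \Lambda^i \otimes S^\bullet \twoheadrightarrow \lfrak_{i,*}$ produces a free resolution $F_\bullet \to \lfrak_{i,*}$ of length $r-i$ on $V$ (finite because $\Lambda^{r+1}(V) = 0$).

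Next, I would apply $\smodhom(-, \Lambda^r \otimes S^\bullet)$ term-by-term; by Corollary \ref{cor:dualizing_functor} each summand $\Lambda^{s+t} \otimes S^\bullet$ occupying bidegree $(s,t)$ of $\bicom(i)$ is sent to $\Lambda^{r-s-t}(V) \otimes S^\bullet(V)$, and the Koszul differentials $\tau_0, \tau_1$ are identified with their counterparts in the opposite direction. Under the coordinate change $(s,t) \mapsto (-s, r-t)$ the resulting bicomplex is exactly $\dbicom(r-i)(V)$. Proposition \ref{prop:hom_dbicom} then supplies its homology: $\field^{\oplus r-i+1} \oplus \bigoplus_{d \geq 1} p_d \Ibar$ in degree $0$ and $\lfrak_{r-i+2,*}(V)$ in degree $r-i$. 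Pontrjagin dualizing via $\hom_{\fdvs}(-, \field)$, and using $D p_d \Ibar \cong q_d \Pbar$ so that $D\lfrak_{1,*} \cong \bigoplus_{d\geq 1} q_d \Pbar$, then yields the claimed formulas for $H^i_I(\lfrak_{i,*})$ and $H^r_I(\lfrak_{i,*})$, with intermediate cohomological degrees zero. The boundary case $i=r$ (length-zero resolution, giving $H^r_I(\lfrak_{r,*}) \cong DS^\bullet$) and $i=1$ (where $\lfrak_{r+1,*}(V) = 0$ forces $H^r_I(\lfrak_{1,*}) = 0$) drop out immediately.

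For the final naturality assertion, the surjection $\bicom(1) \twoheadrightarrow \bicom(i)$ of Lemma \ref{lem:filter-bicomplex} dualizes to the composite inclusion $\dbicom(r-i)(V) \hookrightarrow \dbicom(r-i+1)(V) \hookrightarrow \cdots \hookrightarrow \dbicom(r-1)(V)$. Iterating the $H_0$-short exact sequences of Proposition \ref{prop:hom_dbicom} $i-1$ times, the induced map on $H_0$ is the evident inclusion $\field^{\oplus r-i+1} \oplus \bigoplus p_d \Ibar \hookrightarrow \field^{\oplus r} \oplus \bigoplus p_d \Ibar$ with cokernel $\field^{\oplus i-1}$, and Pontrjagin duality converts this into the required surjection $H^1_I(\lfrak_{1,*}) \twoheadrightarrow H^i_I(\lfrak_{i,*})$ with kernel $\field^{\oplus i-1}$. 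The main obstacle will be the careful bookkeeping of gradings and signs in the dualization step: Corollary \ref{cor:dualizing_functor} handles the individual Koszul rows and columns, but one must check that these term-by-term identifications glue into an isomorphism of bicomplexes compatible with the quotient maps $\bicom(a) \twoheadrightarrow \bicom(b)$, which is essential for the naturality claim.
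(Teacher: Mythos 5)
Your proposal is correct and follows exactly the route the paper intends: take $\mathrm{Tot}(\bicom(i))$ as the $S^\bullet$-free resolution of $\lfrak_{i,*}$ via Proposition \ref{prop:hom_bicom}, dualize term-by-term into $\dbicom(r-i)(V)$ using Corollary \ref{cor:dualizing_functor} and Proposition \ref{prop:local_cohomology_functors}, read off the answer from Proposition \ref{prop:hom_dbicom}, and obtain the final surjectivity statement by iterating the $H_0$-exact sequences for the inclusions $\dbicom(j-1)\hookrightarrow\dbicom(j)$. The index bookkeeping ($\mathrm{Ext}^{r-m}$ landing in total degree $(r-i)-(r-m)$ of $\dbicom(r-i)$) and the boundary cases $i=1$, $i=r$ all check out, so this is a faithful and complete rendering of the argument the paper leaves implicit.
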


\begin{rem}
 \ 
\begin{enumerate}
 \item 
The restriction $r >1$ on the rank is imposed so as to  give a unified statement.
\item
The grading is again suppressed; the reader is encouraged to calculate the appropriate gradings and to verify that the above
 yields the Hilbert series specified in \cite[Section 4.7]{bruner_greenlees}.
\item
A notational sleight of hand has been used; the local cohomology introduces a change of variance with respect to $V$; $\lfrak_{i,*}$ on the left hand side  should be considered as being {\em contravariant} in $V$, by precomposition with the duality functor $\sharp$.
\end{enumerate}
\end{rem}

\subsection{The local cohomology of $\cotorsion ku^* (BV_+)$}
\label{subsect:cotorsion_loc_cohom}

Throughout this section, $V \in \obj \fdvs$ has rank $r>1$. To simplify notation, $Q$ will be written for the (contravariant) functor $V \mapsto \cotorsion ku^* (BV_+) $. Multiplication by $v$ induces a short exact sequence 
\[
 0 
\rightarrow 
Q \stackrel{v}{\rightarrow} 
Q \rightarrow Q/v
\rightarrow 0
\]
(omitting the suspension associated with the grading). Moreover, the augmentation $Q \rightarrow \zed [v]$ induces a short exact sequence
\[
 0 
\rightarrow 
\overline{Q} /v \rightarrow Q/v \rightarrow \zed 
\rightarrow 
0.
\]

Theorem \ref{thm:ku_cohom_BV} implies that there is a natural isomorphism 
$
 \overline{Q} /v 
\cong 
\lfrak_{1, *} (V^\sharp).
$
 Hence, by using the change of rings isomorphism associated to $ku^* (BV_+) \twoheadrightarrow \hz^* (BV_+)$, Corollary \ref{cor:local_cohom_lfrak} gives the local cohomology of $\overline{Q}/v$ ; in particular, the ideal $I$ refers here to the augmentation ideal of $ku^* (BV_+)$.

\begin{prop}
\label{prop:loc_cohom_Q/v}
 The local cohomology of $Q/v$ is concentrated in cohomological degrees zero and one:
\begin{eqnarray*}
 H^0 _I (Q/v) & \cong & 2 \zed 
\\
H^1 _I (Q/v) &\cong & \field^{\oplus r-1} \oplus D \lfrak_{1,*}.
\end{eqnarray*}
In particular, $2 H^1 _I (Q/v) =0$.
\end{prop}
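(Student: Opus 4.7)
The plan is to apply local cohomology to the short exact sequence $0 \to \overline{Q}/v \to Q/v \to \zed \to 0$ and exploit the known local cohomology of the ends. The identification $\overline{Q}/v \cong \lfrak_{1,*}(V^\sharp)$ recalled in the preamble, together with Corollary~\ref{cor:local_cohom_lfrak}(1), shows that $H^*_I(\overline{Q}/v)$ is concentrated in cohomological degree one, where it is $\field^{\oplus r} \oplus D\lfrak_{1,*}$. The constant module $\zed$ is annihilated by the augmentation ideal $I$, so $H^0_I(\zed) = \zed$ and $H^i_I(\zed) = 0$ for $i > 0$.

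Consequently, the long exact sequence reduces to
\[
0 \to H^0_I(Q/v) \to \zed \stackrel{\delta}{\to} \field^{\oplus r} \oplus D\lfrak_{1,*} \to H^1_I(Q/v) \to 0,
\]
with $H^i_I(Q/v) = 0$ for $i \geq 2$. Since the target of $\delta$ is $2$-torsion, $\delta$ automatically factors through $\zed \twoheadrightarrow \zed/2$; once $\delta$ is known to be non-trivial, its kernel is $2\zed$, giving $H^0_I(Q/v) \cong 2\zed$. Keeping track of the implicit gradings, the image of $\delta$ sits in a $\field$-summand of the $\field^{\oplus r}$ factor, so that the cokernel is $\field^{\oplus r-1} \oplus D\lfrak_{1,*}$ as required. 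The ``in particular'' statement is then automatic, $H^1_I(Q/v)$ being a quotient of a $2$-torsion module.

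The main obstacle is establishing the non-triviality of $\delta$. The approach is a comparison with the integral cohomology situation treated in Lemma~\ref{lem:connecting_hz_local_cohom}. The inclusion $ku^*(BV_+)/v \cong \ker\qfrak \hookrightarrow \hz^*(BV_+)$ furnished by Theorem~\ref{thm:ku_cohom_BV}, combined with reduction modulo~$2$, yields a morphism of short exact sequences
\[
\xymatrix{
0 \ar[r] & \overline{Q}/v \ar[r] \ar[d]_\alpha & Q/v \ar[r] \ar[d]_\beta & \zed \ar[r] \ar[d]_\gamma & 0 \\
0 \ar[r] & \hz^*(BV) \ar[r] & \hz^*(BV_+)/2 \ar[r] & \zed/2 \ar[r] & 0,
}
\]
in which $\gamma$ is the canonical surjection. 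By naturality of the local cohomology long exact sequence, $\alpha_* \circ \delta = \delta^{\hz} \circ \gamma_*$ on $H^0_I$. Since $\gamma_*\colon \zed \twoheadrightarrow \zed/2$ is surjective and $\delta^{\hz}$ is non-trivial by Lemma~\ref{lem:connecting_hz_local_cohom}, the composite is non-trivial, forcing $\delta$ itself to be non-trivial and completing the argument.
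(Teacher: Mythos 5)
Your overall strategy is exactly the paper's: apply the local cohomology long exact sequence to $0 \to \overline{Q}/v \to Q/v \to \zed \to 0$, use $\overline{Q}/v \cong \lfrak_{1,*}(V^\sharp)$ together with Corollary \ref{cor:local_cohom_lfrak}(1) to see that $H^*_I(\overline{Q}/v)$ is concentrated in degree one, and reduce everything to the non-triviality of the connecting morphism $\delta$. Up to that point your argument is fine and matches the paper, which disposes of the non-triviality with the single parenthesis ``this can be seen by considering the behaviour modulo $2$''.

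The problem is the way you make that parenthesis precise: the middle vertical arrow $\beta : Q/v \to \hz^*(BV_+)/2$ in your comparison diagram does not exist. The inclusion $ku^*(BV_+)/v \cong \mathrm{Ker}\,\qfrak \hookrightarrow \hz^*(BV_+)$ is a map out of $ku^*(BV_+)/v$, whereas $Q/v$ is the \emph{quotient} of $ku^*(BV_+)/v$ by $(\torsion ku^*(BV_+))/v \cong \mathrm{Im}\,\qfrak$. For the composite $ku^*(BV_+)/v \to \hz^*(BV_+) \to \hz^*(BV_+)/2$ to descend to $Q/v$ one would need $\mathrm{Im}\,\qfrak \subset 2\hz^*(BV_+)$; but the positive-degree part of $\hz^*(BV)$ has exponent $2$, so $2\hz^{>0}(BV_+)=0$, while $\mathrm{Im}\,\qfrak$ is non-zero in positive degrees (e.g.\ $L_6 \cong \Lambda^2(V^\sharp) \neq 0$ for $r \geq 2$). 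Replacing $Q/v$ by $ku^*(BV_+)/v$ in the diagram does give a valid comparison and shows the connecting map for \emph{that} sequence is non-zero, but this does not yet imply $\delta \neq 0$, since its image could a priori die under $H^1_I(\overline{ku^*(BV_+)/v}) \to H^1_I(\overline{Q}/v)$. The clean repair is a direct computation of $H^0_I(Q/v)$, which (since $H^0_I(\overline{Q}/v)=0$) is the group of $I$-power-torsion elements of degree zero: by Theorem \ref{thm:ku_cohom_BV}, $I^d\cdot 1$ surjects onto $\cotorsion ku^{2d}(BV)/v \cong p_d\Ibar(V^\sharp) \neq 0$ for every $d$, so $1$ is not $I$-power torsion; on the other hand $2\,\Pbarzedtwo^{d} \subset \Pbarzedtwo^{d+1} = v\cdot\cotorsion ku^{2d+2}(BV)$ (Lemma \ref{lem:2_times_filt}), so $2\cdot I = 0$ in $\overline{Q}/v$ and $2 \in H^0_I(Q/v)$. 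Hence $H^0_I(Q/v) = 2\zed$ and $\delta$ is non-trivial, after which your identification of the cokernel goes through.
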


\begin{proof}
 The result follows from the exact sequence 
\[
 0 
\rightarrow
H^0 _I (Q/v)
\rightarrow 
\zed 
\rightarrow 
H^1_I (\overline{Q}/v) 
\rightarrow 
H^1_I (Q/v)
\rightarrow 
0. 
\]
The connecting morphism is non-trivial (this can be seen by considering the behaviour modulo $2$), whence the result. (This also explains the notation $2\zed$).
\end{proof}

The local cohomology of $Q$ can now be analysed by using the exact sequence associated to $Q \stackrel{v}{\rightarrow} Q \rightarrow Q/v$, which has the form:
\[
 0 
\rightarrow 
H^0_I (Q) 
\stackrel{v}{\rightarrow}
 H^0_I (Q)
\rightarrow 
H ^0 _I(Q/v) \cong 2 \zed
\rightarrow 
H^1_I (Q)
\stackrel{v}{\rightarrow}
H^1_I (Q)
\rightarrow 
H^1_I (Q/v)
\rightarrow 
0.
\]
The calculation of $H^0 _I (Q) $ is straightforward (cf. \cite[Section 4.4]{bruner_greenlees}) and this implies that the image of the connecting morphism is $\zed/2^{r-1}$. This is sufficient to calculate the local cohomology. A direct approach is taken in \cite{bruner_greenlees}; the above is preferred here since it stresses the relationship between $H^1_I (Q/v)$ and the $v$-adic filtration of $H^1_I (Q)$.

\begin{prop}
\label{prop:v-adic_H1}
(Cf. \cite[Lemma 4.5.1]{bruner_greenlees})
 The associated graded of the $v$-adic filtration of $H^1_I (Q)$ is given by 
\[
 v^i H^1 _I (Q) / v^{i+1} H^1_I Q \cong 
\left\{
\begin{array}{ll}
 \big( \field ^{\oplus r-i-1} \oplus D \lfrak_{1,*} \big ) \cong H^{i+2}_ I (\lfrak_{i+2,*}) & 0 \leq i \leq r-3 \\
\field \oplus D \lfrak_{1,*} & i = r -2;\\
D \lfrak_{1, *} & i \geq r-1.
\end{array}
\right.
\]
\end{prop}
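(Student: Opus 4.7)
The plan is to use the long exact sequence in local cohomology induced by the short exact sequence $0 \to Q \xrightarrow{v} Q \to Q/v \to 0$ (which is exact since $Q$ is $v$-cotorsion, so $v$ is injective on $Q$), together with Proposition \ref{prop:loc_cohom_Q/v} and Corollary \ref{cor:local_cohom_lfrak}, to compute the $v$-adic associated graded of $H^1_I(Q)$ inductively.

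First I would treat the case $i=0$. Since Proposition \ref{prop:loc_cohom_Q/v} gives $H^j_I(Q/v)=0$ for $j \ge 2$, the long exact sequence truncates to
\[
H^0_I(Q/v) \to H^1_I(Q) \xrightarrow{v} H^1_I(Q) \to H^1_I(Q/v) \to H^2_I(Q) \xrightarrow{v} H^2_I(Q) \to 0.
\]
Thus $H^1_I(Q)/vH^1_I(Q)$ is the kernel of the connecting map $H^1_I(Q/v) \to H^2_I(Q)$. To see this kernel is all of $H^1_I(Q/v) \cong \field^{\oplus r-1} \oplus D\lfrak_{1,*}$, I would compare with the calculation of $H^0_I(Q)$ from the Atiyah-Segal theorem (giving the image of $H^0_I(Q/v) \cong 2\zed$ in $H^1_I(Q)$ as $v$-torsion of bounded order); the matching of $H^1_I(Q/v)$ with $H^2_I(\lfrak_{2,*})$ from Corollary \ref{cor:local_cohom_lfrak} then identifies $H^1_I(Q)/v$ with $H^2_I(\lfrak_{2,*})$.

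For the higher cases, I would iterate: apply $H^*_I$ to the short exact sequences $0 \to Q \xrightarrow{v^{i+1}} Q \to Q/v^{i+1} \to 0$, and use the tower of morphisms $Q/v^{i+1} \twoheadrightarrow Q/v^i$ relating them. The quotient $Q/v^{i+1}$ is filtered with subquotients $v^jQ/v^{j+1}Q$ for $0 \le j \le i$; by Theorem \ref{thm:ku_cohom_BV}, this filtration matches the augmentation-ideal filtration of $\Pbarzedtwo$ in each cohomological degree, so each subquotient is isomorphic (up to grading shift) to $Q/v$. A five-lemma comparison then identifies $v^iH^1_I(Q)/v^{i+1}H^1_I(Q)$ with $H^{i+2}_I(\lfrak_{i+2,*})$ for $0 \le i \le r-3$, as prescribed by Corollary \ref{cor:local_cohom_lfrak}. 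For the boundary cases $i = r-2$ and $i \ge r-1$, the functor $\lfrak_{i+2,*}$ evaluated on a rank-$r$ space $V$ degenerates (since $\Lambda^{i+2}(V) = 0$ once $i+2 > r$, cf. the remark after Proposition \ref{prop:hom_dbicom}), so only the persistent $D\lfrak_{1,*}$ summand together with a decreasing number of $\field$-factors survives, yielding the stated answer $\field^{\oplus \max(r-i-1,0)} \oplus D\lfrak_{1,*}$.

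The main obstacle is the inductive step: one must verify that the connecting morphisms in the iterated long exact sequences lose exactly one copy of $\field$ at each step (until $i = r-2$, after which they stabilise). This ultimately reflects the structure of the bicomplex $\bicom(i+2)$ of Section \ref{subsect:bicomplexes}, whose dualisation via Proposition \ref{prop:local_cohomology_functors} and Corollary \ref{cor:dualizing_functor} computes $H^{i+2}_I(\lfrak_{i+2,*})$; the $\field$-factors arise from the truncated Koszul complex summands $\sigma_{\geq k}\kz_0$ of Lemma \ref{lem:filter-bicomplex}, whose contribution progressively drops as $k$ increases, while the persistent $D\lfrak_{1,*}$ summand originates from the Pontrjagin self-duality of the $\Pbarzedtwo$-tower established in Theorem \ref{thm:pontrjagin_duality}.
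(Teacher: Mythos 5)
Your $i=0$ step is essentially the paper's: the long exact sequence for $0 \to Q \xrightarrow{v} Q \to Q/v \to 0$ together with Proposition \ref{prop:loc_cohom_Q/v} gives $H^1_I(Q)/v \cong H^1_I(Q/v) \cong \field^{\oplus r-1}\oplus D\lfrak_{1,*}$, using that the sequence terminates at $H^1_I(Q/v)$, i.e.\ that the connecting map to $H^2_I(Q)$ vanishes because $H^2_I(Q)=0$ ($Q$ embeds in the one-dimensional ring $KU^0(BV_+)$); your proposed justification via $H^0_I(Q)$ does not address this point, but it is standard. The same sequence shows that $\ker(v)\subset H^1_I(Q)$ is the image of the connecting morphism $H^0_I(Q/v)\cong 2\zed \to H^1_I(Q)$, which is $\zed/2^{r-1}$ concentrated in a single internal degree.

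The genuine gap is in the inductive step. The assertion that ``a five-lemma comparison identifies $v^iH^1_I(Q)/v^{i+1}H^1_I(Q)$ with $H^{i+2}_I(\lfrak_{i+2,*})$'' has no content as it stands: there is no morphism between $Q$ and $\lfrak_{i+2,*}$ (or between the complexes computing their local cohomology) along which to compare, and the agreement of these two groups is exactly the coincidence the Proposition records (it is what makes the differentials of the local cohomology spectral sequence cancel correctly); it cannot be taken as input. What must actually be determined is the $\zed[v]$-module structure of $H^1_I(Q)$: knowing $\ker(v)\cong\zed/2^{r-1}$ and $\mathrm{coker}(v)\cong\field^{\oplus r-1}\oplus D\lfrak_{1,*}$ does not determine the $v$-adic associated graded — one must know how the cyclic $2$-torsion is distributed through the filtration (one copy of $\field$ lost at each of the first $r-1$ stages, rather than, say, all of $\zed/2^{r-1}$ dying at once). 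Your observation that $v^jQ/v^{j+1}Q\cong Q/v$ up to shift is true but trivial (since $v$ is injective on $Q$) and does not decide this. The paper settles it by citing the explicit description $H^1_I(Q)\cong Q[\frac{1}{y^*}]/Q$ of \cite[Proposition 4.4.7]{bruner_greenlees}, whose $2$-adic filtration is computed in \cite[Lemma 4.5.1]{bruner_greenlees}, together with the identification of the $2$-adic and $v$-adic filtrations (cf.\ Lemma \ref{lem:Q_basis_properties}); the only new content supplied here is the naturality in $V$. Finally, your closing paragraph attributes the loss of one $\field$ per step to the bicomplexes $\bicom(i+2)$, but those compute the local cohomology of $\torsion ku^*(BV_+)$, not of $Q=\cotorsion ku^*(BV_+)$, so they cannot supply the missing step.
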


\begin{proof}
 To prove that the naturality with respect to $V$ is correct, use the description $Q [\frac{1}{y^*}]/ Q$ which is given in \cite[Proposition 4.4.7]{bruner_greenlees}.
\end{proof}

\begin{rem}
 \ 
\begin{enumerate}
 \item 
Grading shifts are suppressed.
\item
 \cite[Lemma 4.5.1]{bruner_greenlees} is a statement about the $2$-adic filtration; this coincides with the $v$-adic filtration (compare the final statement of Lemma \ref{lem:Q_basis_properties}). 
\item
The distinction between the cases $i \leq r-3$ and $i = r-2$ is simply to underline the isomorphism with $H^{i+2}_I (\lfrak_{i+2,*})$; this should be related to the surjection between local cohomology groups given in Corollary \ref{cor:local_cohom_lfrak}.
\end{enumerate}
\end{rem}

\subsection{The local cohomology spectral sequence for $ku_*(BV_+)$}

The local cohomology of $ku^* (BV_+)$ is determined by using the short exact sequence
\[
 0 
\rightarrow 
\torsion ku^* (BV_+) 
\rightarrow 
ku^* (BV_+ ) 
\rightarrow 
\cotorsion ku^* (BV_+)
\rightarrow 
0, 
\]
and the calculation of the local cohomology of $\torsion ku^* (BV_+)$, which follows from the results of Section \ref{subsect:bicomplexes}, and of $\cotorsion ku^* (BV_+)$, given in Section \ref{subsect:cotorsion_loc_cohom}.

\begin{rem}
As in \cite[Section 4.11]{bruner_greenlees}, it is more transparent to consider an $E^1$-page of the local cohomology spectral sequence which serves to calculate the local cohomology at the $E^2$-page. 
\end{rem}

\begin{thm}
 \label{thm:local_cohom_ss}
(Cf.  \cite[Section 4.11]{bruner_greenlees}.) 
For $V \in \obj \fdvs$ of rank $r>1$, 
there is a local cohomology spectral sequence converging to  $ku^* (BV_+)$ with (up to shift in grading):
\[
 E^1_{s, *} = 
\left\{
\begin{array}{ll}
 \zed [v] & s = 0
\\
H^1_I (Q) & s =-1
\\
H^{-s}_I (\lfrak_{-s,*} ) & -2 \geq s \geq 1-r
\\
H^r _I ( ku^* (BV_+) ) & s= -r, 
\end{array}
\right.
\]
where  $H^r _I (ku^* (BV_+) ) $ fits into an exact sequence 
\[
 0 
\rightarrow 
\field \oplus D \lfrak_{1,*} \rightarrow H^r _I (ku^* (BV_+) )
\rightarrow 
H^r _I (\hz^* (BV_+) ) 
\]
under the morphism induced by $ku^* (BV_+) \rightarrow \hz^* (BV_+)$. 

The $E^\infty$-page is given by
\[
 E^\infty_{s, *} = 
\left\{
\begin{array}{ll}
\zed [v] & s = 0
\\
v^{r-1} H^1_I (Q) & s =-1
\\
0 & -2 \geq s \geq 1-r 
\\
\mathrm{Image} \{ H^r _I ( ku^* (BV_+) )\rightarrow H^r _I (\hz^* (BV_+) ) \} & s= -r.
\end{array}
\right.
\]
Moreover, the morphism $H^0_I (ku^* (BV_+)) / v \rightarrow H^0_I (\hz^* (BV_+))$ is a bijection onto the permanent cycles in the zero column of the local cohomology spectral sequence for $\hz_* (BV_+)$.
\end{thm}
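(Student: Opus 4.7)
The plan is to assemble the spectral sequence from the short exact sequence of $S^\bullet(V)$-modules
\[
0 \rightarrow \torsion ku^*(BV_+) \rightarrow ku^*(BV_+) \rightarrow \cotorsion ku^*(BV_+) \rightarrow 0
\]
via the associated long exact sequence in local cohomology. First I would use the splitting $\torsion ku^*(BV_+^\sharp) \cong \bigoplus_{i=2}^r \lfrak_{i,*}(V)$ of Section \ref{subsect:bicomplexes} together with Corollary \ref{cor:local_cohom_lfrak} to read off $H^*_I(\torsion ku^*(BV_+))$: this gives the terms $H^i_I(\lfrak_{i,*})$ in cohomological degrees $2 \leq i \leq r-1$, the term $H^r_I(\lfrak_{r,*}) \cong DS^\bullet$, and the collection of summands $H^r_I(\lfrak_{i,*}) \cong D\lfrak_{r-i+2,*}$ for $2 \leq i \leq r-1$ that contribute to the bottom row. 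Combining this with the calculation of $H^*_I(\cotorsion ku^*(BV_+)) = H^*_I(Q)$ from Section \ref{subsect:cotorsion_loc_cohom} (which is concentrated in cohomological degrees $0,1$) produces all terms of the $E^1$-page as described, with the $s=-r$ entry obtained by comparing the $\lfrak_{r,*}$ contribution $\field \oplus D\lfrak_{1,*}$ (arising from the top filtration quotient $H^r_I(\lfrak_{r,*})$ together with the connecting contribution from $H^{r-1}_I$ in the \emph{relative} long exact sequence) with $H^r_I(\hz^*(BV_+))$ via the natural map $ku \to \hz$; the displayed exact sequence is precisely the resulting identification.

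The key technical point is the identification of the $E^1$-differentials. Here I would use the surjection of bicomplexes $\bicom(i-1) \twoheadrightarrow \bicom(i)$ from Lemma \ref{lem:filter-bicomplex}; after applying local duality via Proposition \ref{prop:local_cohomology_functors}, these induce precisely the surjections $H^{i-1}_I(\lfrak_{i-1,*}) \twoheadrightarrow H^i_I(\lfrak_{i,*})$ with kernel $\field$ of Corollary \ref{cor:local_cohom_lfrak}. Iterating from $s=-2$ leftwards, these differentials successively annihilate the extraneous $\field^{\oplus r-i+1}$ factors, leaving nothing on $E^\infty$ in the middle columns $-2 \geq s \geq 1-r$. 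A parallel argument for the $s=-1$ column exploits the $v$-adic filtration of $H^1_I(Q)$ from Proposition \ref{prop:v-adic_H1}: the successive associated graded quotients agree (up to grading) with $H^{i+2}_I(\lfrak_{i+2,*})$, so the differentials entering the $s=-1$ column from the middle columns realise multiplication by $v$ and peel off these quotients, leaving precisely $v^{r-1} H^1_I(Q)$ on $E^\infty$.

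For the top and bottom columns, $E^\infty_{0,*} = \zed[v]$ follows because the only differentials leaving the $s=0$ column land in $H^1_I(Q)$ and are controlled by the connecting map $\zed \to H^1_I(\overline{Q}/v)$ identified in Proposition \ref{prop:loc_cohom_Q/v}. The $s=-r$ column $E^\infty_{-r,*}$ is computed by observing that the outgoing differentials land in columns already shown to be zero at $E^\infty$, so it survives modulo the incoming differentials from the right; these match under $ku \to \hz$ with the differentials in the local cohomology spectral sequence for $\hz_*(BV_+)$ analysed earlier, whence the image description. The final statement about the zero column is then immediate: the image of $H^0_I(ku^*(BV_+))/v$ in $H^0_I(\hz^*(BV_+))$ coincides with the permanent cycles $2^{r-1}\zed$ identified in the $\hz$ analysis, by comparison of the two spectral sequences via $ku \to \hz$.

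The main obstacle will be ensuring that the differentials on $E^1$ are correctly identified with the surjections of Corollary \ref{cor:local_cohom_lfrak} and with multiplication by $v$ on $H^1_I(Q)$. This requires tracking, through the local duality isomorphism of Corollary \ref{cor:dualizing_functor} and the bicomplex filtrations of Lemma \ref{lem:filter-bicomplex}, the compatibility of the connecting maps arising from the short exact sequence $0 \to \torsion ku^* \to ku^* \to \cotorsion ku^* \to 0$ with the filtration-induced maps between the $\lfrak_{i,*}$. The change of variance introduced by local cohomology and the suppression of grading shifts demand particular care, as does the verification that the duality $D p_d\Ibar \cong q_d\Pbar$ is compatible with the identifications used in both Proposition \ref{prop:v-adic_H1} and Corollary \ref{cor:local_cohom_lfrak}.
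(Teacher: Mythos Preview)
Your assembly of the $E^1$-page from the torsion/cotorsion short exact sequence, Corollary~\ref{cor:local_cohom_lfrak}, and the computation of $H^*_I(Q)$ in Section~\ref{subsect:cotorsion_loc_cohom} matches the paper's approach. The gap is in your account of the differentials: in this spectral sequence the differentials decrease $s$, so they run \emph{out of} the $s=-1$ column \emph{towards} $s=-2,\ldots,-r$, not into it. Your phrase ``the differentials entering the $s=-1$ column from the middle columns'' has this backwards, and your preceding paragraph posits $d^1$-differentials between consecutive middle columns via surjections $H^{i-1}_I(\lfrak_{i-1,*}) \twoheadrightarrow H^i_I(\lfrak_{i,*})$, whereas the paper's proof asserts that no non-trivial differentials originate in the range $-2 \geq s \geq 1-r$ at all. (Note too that the surjections of Corollary~\ref{cor:local_cohom_lfrak} are $H^1_I(\lfrak_{1,*}) \twoheadrightarrow H^i_I(\lfrak_{i,*})$ with kernel $\field^{\oplus i-1}$, not sequential maps with kernel $\field$.)

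The paper's mechanism is that every non-trivial differential originates on the $s=-1$ column and is governed by the $v$-adic filtration of $H^1_I(Q)$ from Proposition~\ref{prop:v-adic_H1}: the differential $d^{k}$ carries the layer $v^{k-1}H^1_I(Q)/v^{k}H^1_I(Q)$ isomorphically onto what remains in column $s=-1-k$. For $1 \leq k \leq r-2$ this wipes out the middle column $s=-1-k$ in a single step, since that layer is precisely $H^{k+1}_I(\lfrak_{k+1,*})$; for $k=r-1$ it kills the subobject $\field \oplus D\lfrak_{1,*}$ of $H^r_I(ku^*(BV_+))$, leaving the image in $H^r_I(\hz^*(BV_+))$. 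After these $r-1$ differentials, what survives in column $s=-1$ is exactly $v^{r-1}H^1_I(Q)$. The surjections of Corollary~\ref{cor:local_cohom_lfrak}, once one uses $\overline{Q}/v \cong \lfrak_{1,*}$, are the heuristic shadow of these outgoing differentials (as the Remark following the theorem makes explicit); they are not themselves differentials between middle columns.
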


\begin{proof}
 The $E^1$-page of the spectral sequence is determined by combining the results of  Corollary \ref{cor:local_cohom_lfrak}, for the contribution from the local cohomology of $\torsion ku^* (BV_+)$, and of Section \ref{subsect:cotorsion_loc_cohom} for the local cohomology of $Q$. The results of Section  \ref{subsect:bicomplexes} provide the exact sequence for $H^r _I (ku^* (BV_+) ) $.

The entire behaviour of the spectral sequence is determined  by the fact that the only non-trivial differentials originate on the $s=-1$ column, using  Proposition \ref{prop:v-adic_H1} to relate this to the $v$-adic filtration of $H^1_I (Q)$. 
\end{proof}

\begin{rem}
Corollary \ref{cor:local_cohom_lfrak} suggests the heuristic explanation that the differentials are induced by the surjection of bicomplexes $\bicom (1) \twoheadrightarrow \bicom (i)$, for $1 < i < r$. The kernel of the map induced in local cohomology has already been accounted for in the calculation of $H^1_I (Q)$.
\end{rem}


\providecommand{\bysame}{\leavevmode\hbox to3em{\hrulefill}\thinspace}
\providecommand{\MR}{\relax\ifhmode\unskip\space\fi MR }
\providecommand{\MRhref}[2]{%
  \href{http://www.ams.org/mathscinet-getitem?mr=#1}{#2}
}
\providecommand{\href}[2]{#2}

\end{document}